\setlist{nolistsep}
\DeclareFontFamily{U}{FdSymbolA}{}
\DeclareFontFamily{U}{FdSymbolC}{}
\DeclareFontFamily{U}{FdSymbolF}{}
\DeclareFontShape{U}{FdSymbolA}{m}{n}{<-> s * FdSymbolA-Book}{}
\DeclareFontShape{U}{FdSymbolC}{m}{n}{<-> s * FdSymbolC-Book}{}
\DeclareFontShape{U}{FdSymbolF}{m}{n}{<-> s * FdSymbolF-Book}{}
\DeclareSymbolFont{fdsymbols}{U}{FdSymbolA}{m}{n}
\DeclareSymbolFont{fdarrows}{U}{FdSymbolC}{m}{n}
\DeclareSymbolFont{fddelimiters}{U}{FdSymbolF}{m}{n}
\DeclareMathSymbol{\Earrow}{\mathrel}{fdarrows}{8}
\DeclareMathSymbol{\Narrow}{\mathrel}{fdarrows}{9}
\DeclareMathSymbol{\Warrow}{\mathrel}{fdarrows}{10}
\DeclareMathSymbol{\Sarrow}{\mathrel}{fdarrows}{11}
\DeclareMathSymbol{\Nearrow}{\mathrel}{fdarrows}{12}
\DeclareMathSymbol{\Nwarrow}{\mathrel}{fdarrows}{13}
\DeclareMathSymbol{\Swarrow}{\mathrel}{fdarrows}{14}
\DeclareMathSymbol{\Searrow}{\mathrel}{fdarrows}{15}
\DeclareMathSymbol{\fdmapsto}{\mathrel}{fdarrows}{40}
\DeclareMathSymbol{\vdash}{\mathrel}{fdarrows}{224}
\DeclareMathSymbol{:}{\mathrel}{fdsymbols}{"02}
\DeclareMathSymbol{\later}{\mathord}{fdsymbols}{82}
\DeclareMathSymbol{\earlier}{\mathord}{fdsymbols}{84}
\DeclareMathSymbol{\medsquare}{\mathord}{fdsymbols}{117}
\DeclareMathSymbol{\timessquare}{\mathord}{fdsymbols}{124}
\DeclareMathSymbol{\dotsquare}{\mathord}{fdsymbols}{125}
\DeclareMathSymbol{\largesquare}{\mathord}{fdsymbols}{127}
\DeclareMathSymbol{\heartsuit}{\mathord}{fdsymbols}{183}
\DeclareMathSymbol{[}{\mathopen}{fddelimiters}{"12}
\DeclareMathSymbol{]}{\mathclose}{fddelimiters}{"18}
\DeclareMathSymbol{(}{\mathopen}{fddelimiters}{"00}
\DeclareMathSymbol{)}{\mathclose}{fddelimiters}{"06}
\DeclareMathDelimiter{\langle}{\mathopen}{fddelimiters}{"85}{fddelimiters}{"85}
\DeclareMathDelimiter{\rangle}{\mathclose}{fddelimiters}{"8B}{fddelimiters}{"8B}
\DeclareMathSymbol{/}{\mathord}{fddelimiters}{"A9}
\DeclareMathSymbol{|}{\mathord}{fddelimiters}{"B6}
\DeclareFontFamily{U}{min}{}
\DeclareFontShape{U}{min}{m}{n}{<-> udmj30}{}
\newcommand{\yon}{\!\text{\usefont{U}{min}{m}{n}\symbol{'210}}\!}
\DeclareFontFamily{U}{bskma}{\skewchar\font130 }
\DeclareFontShape{U}{bskma}{m}{n}{<->bskma10}{}
\DeclareSymbolFont{bskadd}      {U}         {bskma}{m}{n}
\DeclareMathSymbol{\hermitmatrix}         {\mathord}{bskadd}    {"F2}
\DeclareSymbolFont{txsymbolsC}{U}{txsyc}{m}{n}
\DeclareMathSymbol{\DiamonddotRight}{\mathrel}{txsymbolsC}{142}
\DeclareMathSymbol{\Diamonddotright}{\mathrel}{txsymbolsC}{134}
\DeclareMathSymbol{\Diamondright}{\mathrel}{txsymbolsC}{132}
\def\L{\mathcal{L}}
\def\M{\mathcal{M}}
\def\A{\mathscr{A}}
\def\B{\mathscr{B}}
\def\C{\mathscr{C}}
\def\D{\mathscr{D}}
\def\E{\mathscr{E}}
\def\cS{\mathcal{S}}
\def\@paren#1#2\endparen{\@ifmtarg{#2}{#1}{(#1#2)}}
\def\paren#1{\@paren #1 \endparen}
\def\F#1#2{\paren{#1}\mathord{\dotsquare} \paren{#2}}
\def\bigF#1#2{\mathord{\big\langle#1\big|}\@ifmtarg{#2}{}{#2\big\rangle}}
\def\preF#1#2{\paren{#1}\mathord{\medsquare} #2}
\def\U#1#2{\mathord{\langle#1\Vert}\@ifmtarg{#2}{}{#2\rangle}}
\def\bigU#1#2{\mathord{\big\langle#1\big\Vert}\@ifmtarg{#2}{}{#2\big\rangle}}
\def\p{\mathsf{p}}
\def\phat{\widehat{\mathsf{p}}}
\def\lexk{\mathcal{L}\mathit{ex}_\kappa}
\def\cat{\mathcal{C}\mathit{at}}
\def\gpd{\mathcal{G}\mathit{pd}}
\def\twocat{2\text{-}\mathcal{C}\mathit{at}}
\def\Chat{\mathrlap{\widehat{\phantom{C}}}\mathscr{C}}
\def\Ehat{\mathrlap{\widehat{\phantom{E}}}\mathscr{E}}
\def\Ghat{\widehat{G}}
\def\lock{\text{\faLock}}
\def\divby#1{\sfrac{}{#1}}
\def\keyby#1#2{#1\sfrac{}{#2}}
\def\cmpnt#1#2{\smash{#1}^{#2}}
\def\ocmpnt#1#2{\incl{#2}({#1})}
\def\Cunlock#1{\C_{#1}}
\def\Cradj#1{\C_{#1}^{\#}}
\def\reflect#1{\mathsf{L}^{#1}}
\def\incl#1{\mathbf{R}_{#1}}
\let\To\Rightarrow
\let\too\longrightarrow
\let\xto\xrightarrow
\def\ctx{\;\mathsf{ctx}}
\def\type{\;\mathsf{type}}
\def\sinister{\;\mathrm{sinister}}
\def\tangible{\;\mathrm{tangible}}
\def\issharp{\;\mathrm{sharp}}
\def\transparent{\;\mathrm{transparent}}
\def\ec{\diamond}
\def\op{^{\mathsf{op}}}
\def\coop{^{\mathsf{coop}}}
\def\flet#1#2#3#4{\mathsf{let}_{#1}\,#2 \leftarrow #3 \,\mathsf{in}\,#4}
\def\commacomp#1#2{(#1\downarrow (#2\circ -))}
\def\locks{\mathsf{locks}}
\def\Type{\mathsf{Type}}
\def\Tm{\mathrm{Tm}}
\def\Ty{\mathrm{Ty}}
\def\Tmb{\mathrm{Tm}^!}
\def\Tyb{\mathrm{Ty}^!}
\def\Tyhatb{\widehat{\mathrm{Ty}}{}^!}
\def\e{\mathrm{e}}
\def\f{\mathrm{f}}
\def\Tme{\mathrm{Tm}^{\mathrm{e}}}
\def\Tye{\mathrm{Ty}^{\mathrm{e}}}
\def\taue{\tau^{\mathrm{e}}}
\def\Tmf{\mathrm{Tm}^{\mathrm{f}}}
\def\Tyf{\mathrm{Ty}^{\mathrm{f}}}
\def\tauf{\tau^{\mathrm{f}}}
\def\Pr{\mathcal{P}}
\def\sPrl#1/#2{\mathrm{L}_{#2}\mathcal{P}^{\dDelta}_{\mathrm{inj}}(#1)}
\def\sPrlp#1/#2{\mathrm{L}_{#2}\mathcal{P}^{\dDelta}_{\mathrm{proj}}(#1)}
\def\taub{\tau^!}
\def\tauhatb{\widehat{\tau}{}^!}
\def\ce{\mathord{\,\triangleright\,}}
\def\tce{\mathord{\triangleright}}
\def\mce#1{\mathord{\,{\triangleright}^{#1}\,}}
\def\mnce#1#2{\mathord{\hermitmatrix^{#1}_{#2}}}
\def\leftdiv#1\by#2{#1 / #2}
\def\slice#1//#2{#1 \mathord{\sslash} #2}
\def\uparr{\mathord{\uparrow}}
\def\blank{\underline{\hspace{1.2ex}}}
\def\topos{\mathcal{T}\mathit{opos}}
\def\Set{\mathbf{Set}}
\def\sSet{\mathbf{sSet}}
\def\ladj#1{#1[#1^\dagger]}
\def\ladjs#1#2{#1[#2^\dagger]}
\def\LS{\ladjs{\L}{\cS}}
\def\radj#1{#1[\prescript{\dagger}{}{#1}]}
\def\sub#1\for#2\in#3{#3[#2 \mapsto #1]}
\def\Ps{\mathcal{P}\!\mathit{s}_{\mathit{colax}}}
\def\Psadj{\mathcal{P}\!\mathit{s}^{\mathit{adj}}}
\def\mod{\mathsf{mod}}
\def\shut#1#2{{#1}\mathord{\fdmapsto}\paren{#2}}
\def\open#1#2{\paren{#2}\raisebox{1pt}{\textbf{\scriptsize @}}#1}
\def\shutinv#1#2{{#1}\mathord{\fdmapsto}^{-1}#2}
\def\tcirc{\mathord{\circ}}
\def\blank{\underline{\hspace{1.3ex}}}
\DeclareFontFamily{U}{MnSymbolE}{}
\DeclareFontShape{U}{MnSymbolE}{m}{n}{
    <-6>  MnSymbolE5
   <6-7>  MnSymbolE6
   <7-8>  MnSymbolE7
   <8-9>  MnSymbolE8
   <9-10> MnSymbolE9
  <10-12> MnSymbolE10
  <12->   MnSymbolE12}{}
\newcommand{\mnquote}[1]{\usefont{U}{MnSymbolE}{m}{n}\char#1\relax}
\newcommand{\quinelquote}{\mathchoice{\raisebox{.1ex}{\rlap{\mnquote{'036}}\kern.25em}}
  {\raisebox{.1ex}{\rlap{\mnquote{'036}}\kern.25em}}
  {\raisebox{-.4ex}{\rlap{\mnquote{'036}}\kern.2em}}
  {\raisebox{-.7ex}{\rlap{\mnquote{'036}}\kern.2em}}}
\newcommand{\quinerquote}{\/\mathchoice{\raisebox{.1ex}{\kern.25em\llap{\mnquote{'043}}}}
  {\raisebox{.1ex}{\kern.25em\llap{\mnquote{'043}}}}
  {\raisebox{-.4ex}{\kern.2em\llap{\mnquote{'043}}}}
  {\raisebox{-.7ex}{\kern.2em\llap{\mnquote{'043}}}}}
\newcommand{\nameof}[1]{\quinelquote #1 \quinerquote}
\newcommand{\baseof}[1]{{\mathsf{V}_{#1}}}
\newcommand{\topof}[1]{\mathsf{E}_{#1}}
\numberwithin{equation}{section}
\numberwithin{subsubsection}{section}
\newcommand{\drpullback}[1][dr]{\ar[#1,phantom,near start,"\lrcorner"]}
\def\U#1#2{\@paren #1\endparen \mathord{\Diamonddotright} \paren{#2}}
\def\preU#1#2{\@paren #1\endparen \mathord{\Diamondright} #2}
\def\Cradj#1{\C_{#1^\dagger}}
\def\mnce#1#2{\mce{#1 \circ #2^\dagger}}
\def\bG{\boldsymbol\Gamma}
\def\bD{\boldsymbol\Delta}
\def\bth{\boldsymbol\theta}
\def\defthm#1#2#3{%
  \newaliascnt{#1}{equation}
  \newtheorem{#1}[#1]{#2}
  \aliascntresetthe{#1}
  \crefname{#1}{#2}{#3}
}
\newtheorem{unrem}{Remark}[section]
\crefname{figure}{Figure}{Figures}
\renewenvironment{theorem}{\vspace{-\lastskip}\par\addvspace{.6pc plus
    .2pc minus .1pc}\begin{mythm}}{\end{mythm}\par\addvspace{.6pc plus
    .2pc minus .1pc}}
\renewenvironment{lemma}{\vspace{-\lastskip}\par
  \addvspace{.6pc plus .2pc minus .1pc}\begin{mylem}}
  {\end{mylem}\par\addvspace{.6pc plus .2pc minus .1pc}}
\renewenvironment{corollary}{\vspace{-\lastskip}\par \addvspace{.6pc
    plus .2pc minus .1pc}\begin{mycor}}{\end{mycor}\par\addvspace{.6pc
    plus .2pc minus .1pc}}
\renewenvironment{definition}{\vspace{-\lastskip}\par \addvspace{.6pc
    plus .2pc minus
    .1pc}\begin{mydefn}\rm}{\end{mydefn}\par\addvspace{.6pc plus .2pc
    minus .1pc}}
\renewenvironment{example}{\vspace{-\lastskip}\par \addvspace{.6pc plus
    .2pc minus .1pc}\begin{myexmp}\rm}{\end{myexmp}\par\addvspace{.6pc
    plus .2pc minus .1pc}}
\renewenvironment{remark}{\vspace{-\lastskip}\par
  \addvspace{.6pc plus .2pc minus .1pc}\begin{myrem}\rm}
{\end{myrem}\par\addvspace{.6pc plus .2pc minus .1pc}}
\newenvironment{unremark}{\vspace{-\lastskip}\par
  \addvspace{.6pc plus .2pc minus .1pc}\begin{unrem}\rm}
{\end{unrem}\par\addvspace{.6pc plus .2pc minus .1pc}}
\newenvironment{theoremschema}{\vspace{-\lastskip}\par\addvspace{.6pc plus
    .2pc minus .1pc}\begin{mythmsch}}{\end{mythmsch}\par\addvspace{.6pc plus
    .2pc minus .1pc}}
\newenvironment{assumption}{\vspace{-\lastskip}\par\addvspace{.6pc plus
    .2pc minus .1pc}\begin{myassum}}{\end{myassum}\par\addvspace{.6pc plus
    .2pc minus .1pc}}
\begin{document}
%%%Note the beginning and end of the frontmatter section that starts here%%%%%
\begin{frontmatter}
  \title{Semantics of Multimodal Adjoint Type Theory\thanksref{ALL}} 						%%Title here and the
 \thanks[ALL]{This material is based upon work supported by the Air Force Office of Scientific Research under award number FA9550-21-1-0009.}   %%Text of \thanks[ALL} here..
 %%%%%%%%%%%%%%%%%%%%%%%%%%%%			This Thanks is optional.
  %%%%Now the author(s) names(s)%%%%%
  \author{Michael Shulman\thanksref{a}\thanksref{myemail}}	%%Note NO SPACE between 
    %%%Next come the addresses%%%%
   \address[a]{Department of Mathematics\\ University of San Diego\\				%or between \thanksrefs...
    San Diego, CA, USA}  							
   \thanks[myemail]{Email: \href{mailto:shulman@sandiego.edu} {\texttt{\normalshape
        shulman@sandiego.edu}}} 
   %%%Note: if both authors share same institution, only list the address once, after the second 
   %%%author. 
   %%%There also is a link from the first author to the co-author's address to show how to list 
   %%%affiliations to more than one institution, when needed. 
\begin{abstract} 
  We show that contrary to appearances, Multimodal Type Theory (MTT) over a 2-category $\mathcal{M}$ can be interpreted in any $\mathcal{M}$-shaped diagram of categories having, and functors preserving, $\mathcal{M}$-sized limits, without the need for extra left adjoints.
  This is achieved by a construction called ``co-dextrification'' that co-freely adds left adjoints to any such diagram, which can then be used to interpret the ``context lock'' functors of MTT.
  Furthermore, if any of the functors in the diagram have right adjoints, these can also be internalized in type theory as negative modalities in the style of FitchTT.
  We introduce the name Multimodal Adjoint Type Theory (MATT) for the resulting combined general modal type theory.
  In particular, we can interpret MATT in any finite diagram of toposes and geometric morphisms, with positive modalities for inverse image functors and negative modalities for direct image functors.
\end{abstract}
\begin{keyword}
  dependent type theory, modalities, modal type theory, categorical semantics
\end{keyword}
\end{frontmatter}

\section{Introduction}
\label{sec:introduction}

\emph{Modal type theories} involve type-forming operations, such as the classical $\Box$ (necessity) and $\lozenge$ (possibility), whose introduction and elimination rules modify the accessibility of previous hypotheses.
The increasing number of modal type theories has led to a need for general frameworks that can be instantiated to any new example, to avoid having to develop the metatheory of each new modal type theory from scratch.

After~\cite{ls:1var-adjoint-logic,lsr:multi}, each instantiation of a general modal type theory is determined by a 2-category
%\footnote{In~\cite{lsr:multi} a more general 2-categorical structure was used, enabling the subsumption of \emph{substructural} as well as modal type theories.
% In this paper we are concerned only with structural (dependent) modal type theories, so our mode theories will be plain 2-categories.}
$\M$, the ``mode theory''.
Its objects denote ``modes'', its morphisms generate modal operators relating types at different modes, and its 2-cells govern their interaction.
% In fact, in~\cite{ls:1var-adjoint-logic,lsr:multi} each morphism in $\M$ generates an \emph{adjoint pair} of modal operators, the left adjoint ``positive'' and the right adjoint ``negative''.
% This is particularly pleasing because models of type theory often arise from toposes, and functors between toposes often come in adjoint pairs (geometric morphisms).
However, the ``LSR'' theory of~\cite{ls:1var-adjoint-logic,lsr:multi} is only simply typed, % and efforts to generalize it to dependent type theory have been frustrating.
%Relatedly,
its definitional equality is ill-behaved, and it uses awkward global context operations. % that are ill-suited to implementation and metatheory.

The more recent frameworks MTT~\cite{gknb:mtt} and FitchTT~\cite{gckgb:fitchtt} resolve these problems: they are dependently typed, with a well-behaved definitional equality, and only ever extend the context; all indications suggest their implementability~\cite{gratzer:norm-mtt,sgb:mitten}.
%(The difference between the two is in polarity: the modalities of MTT are positive while those of FitchTT are negative.)
However, their na\"ive semantics requires the functors interpreting the modal operators to have additional left adjoints (``context locks''), which are not visible internally in the syntax.
% Thus, MTT or FitchTT with mode theory $\M$, which looks as though it is talking about an $\M$-shaped diagram of categories and nothing more, does not appear to have semantics in an \emph{arbitrary} $\M$-shaped diagram of categories, only those in which additional left adjoints exist.

We will show that this defect is, for the most part, only apparent.
Namely, from any suitable $\M$-shaped diagram of categories, we construct a new diagram whose functors all \emph{do} have left adjoints, enabling an interpretion of MTT and FitchTT.
Moreover, the \emph{types} in the new diagram are induced from the original ones, so this interpretation directly yields information about the original diagram.
We call this the \emph{co-dextrification}, since it makes existing functors into \emph{right} adjoints and has a ``cofree'' universal property.

%%%%%%%%%%%%%%%%%%%%%%%%%%%%%%

To explain the co-dextrification, consider first \emph{split-context} modal type theories, e.g.\ as in~\cite{pd:modal,shulman:bfp-realcohesion,zwanziger:natmod-comnd}.
As an example, let $\M$ have two objects and one morphism $\mu:p\to q$; we want to interpret modal type theory in a diagram of two categories and a functor $\C_\mu : \C_p\to\C_q$.
The split-context theory has ordinary $p$-judgments $\Gamma \vdash_p \mathcal{J}$, but $q$-judgments $\Gamma \mid \Delta \vdash_q \mathcal{J}$ with the context split into a $p$-part $\Gamma$ and a $q$-part $\Delta$, where $\Delta$ can depend on $\Gamma$.
We consider the types in $\Gamma$ to implicitly have $\C_\mu$ applied.
The modal rules then rearrange these context parts: \cref{fig:split-rule} shows the split-context introduction rule for the $\mu$-modality.

\begin{figure}
  \centering
  \hfill
  \begin{subfigure}[b]{0.3\textwidth}
    \centering
    \(\infer{\Gamma \vdash_p a : A}{\Gamma\mid\Delta \vdash_q \mathsf{mod}(a) : \F\mu A}\)
    \caption{The split-context rule}
    \label{fig:split-rule}
  \end{subfigure}
  \hfill
  % \begin{subfigure}[b]{0.3\textwidth}
  %   \centering
  %   \(\infer{\Gamma,\lock_\mu \vdash_p a : A}{\Gamma \vdash_q \mathsf{mod}(a) : \F\mu A}\)
  %   \caption{The context-lock rule}
  %   \label{fig:lock-rule}
  % \end{subfigure}
  % \hfill
  \begin{subfigure}[b]{0.3\textwidth}
    \centering
    \(\infer{\Gamma\divby\mu \vdash_p a : A}{\Gamma \vdash_q \mathsf{mod}(a) : \F\mu A}\)
    \caption{The division rule}
    \label{fig:div-rule}
  \end{subfigure}
  \hfill
  ~
  \caption{Comparison of modal introduction rules}
  \label{fig:cmp-rules}
\end{figure}

Following~\cite{zwanziger:natmod-comnd}, the $q$-contexts $(\Gamma\mid\Delta)$ suggest semantics in the comma category $\Chat_q = (\C_q\downarrow \C_\mu)$, whose objects are triples $(\Gamma,\Gamma\tce\Delta,\p_\Delta)$ where $\Gamma\in \C_p$, $\Gamma\tce\Delta\in \C_q$, and $\p_\Delta : \Gamma\tce\Delta\to \C_\mu(\Gamma)$.
This matches the split-context syntax, but can also be thought of as introducing a context lock functor in a ``universal'' way: the functor $\Chat_\mu : \C_p \to \Chat_q$ sending $\Gamma$ to $(\Gamma, \C_\mu(\Gamma), 1_{\C_\mu(\Gamma)})$ has a left adjoint $(-)\divby\mu$, defined by $(\Gamma, \Gamma\tce\Delta, \p_\Delta)\divby\mu = \Gamma$.
Thus, the rule in \cref{fig:split-rule} can also be written as in \cref{fig:div-rule}.

For general $\M$, there is no obvious way to split the context by restricting dependency.
Instead, the spiritual generalizations of split-context theories, sometimes called \emph{left-division} theories (e.g.~\cite{nvd:parametric-quant,mr:commuting-cohesions}), annotate each context variable with a morphism of $\M$ that is implicitly applied to it, and the modal rules modify these annotations.
In our simple example, each variable in a $q$-context is annotated with $\mu$ or $1_q$, and the operation $\Gamma\divby\mu$ deletes the $1_q$-annotated variables and uses the others to form a $p$-context.
For more general $\M$, when defining $\Gamma\divby\nu$, each annotated variable $x:^\mu A$ in $\Gamma$ is replaced by zero or more variables annotated by a family of morphisms $\varrho_i$ equipped with 2-cells $\alpha_i : \mu \To \nu\circ\varrho_i$ forming a \emph{left multi-lifting}, i.e.\ such that for any $\beta:\mu \To \nu\circ \sigma$ there is a unique $i$ and factorization of $\beta$ through $\alpha_i$ by some $\varrho_i \To \sigma$.

Unfortunately, a fully general $\M$ may not have all left multi-liftings.
In LSR, each rule application is instead allowed to choose \emph{any} morphism $\varrho$ with $\alpha:\mu \To \nu\circ \varrho$; the problems of LSR stem from the non-uniqueness of such a choice.
MTT and FitchTT solve this by delaying the choice of 2-cells, treating $(-)\divby\mu$ as a \emph{constructor} of contexts rather than an operation on them that computes.
(It is then sometimes written as $\Gamma.\lock_\mu$ or $\Gamma.\{\mu\}$, but I see no reason not to stick with $\Gamma\divby\mu$.)

\cref{fig:div-rule} shows $\F\mu{{(-)}}$ is ``right adjoint'' to $(-)\divby\mu$, so the semantics of these theories appears to require the modality functors to have left adjoints.
This contrasts with how we interpreted the split-context theory in a comma category, creating a \emph{new} left adjoint.
Some work~\cite{shulman:t2ttalk,licata:fibdtttalk} tried to generalize this by mimicking annotated contexts in semantics, but this was complicated and difficult.
Instead, we change perspective: rather than regarding an object of $(\C_q\downarrow \C_\mu)$ as an object $\Gamma\in \C_p$ together with an object $\Gamma\tce\Delta\in \C_q$ that \emph{depends on} $\C_\mu(\Gamma)$, we regard it as an object $\Delta\in \C_q$ together with a ``specified value of $\Delta\divby\mu$'' in $\C_p$, and a \emph{weakening} substitution from $\Delta$ to $\C_\mu(\Delta\divby\mu)$.
How a context is built from annotated types --- like the fact that it is built from types at all --- is a property of syntax that doesn't need to be reflected in semantics. %: an object of $\Chat_q$ is just an object of $\C_q$ that \emph{can be locked} in a specified way along any morphism $\mu:p\to q$.
We can now generalize to any $\M$: each $\Chat_q$ is an \emph{oplax limit} of the $\C_p$ over a slice 2-category.

It remains to specify how to \emph{extend} such a context by a type, i.e.\ how do we compute $(\Gamma, x :^\mu A)\divby\nu$ in terms of $\Gamma\divby\nu$ and $A$?
Instead of choosing \emph{one} pair $(\varrho,\alpha)$ as in LSR, or a universal family of them as in a multi-lifting theory, we use \emph{all} of them.
More precisely, we define $(\Gamma, x :^\mu A)\divby\nu$ to be the extension of $\Gamma\divby\nu$ by the \emph{limit} of $x :^\varrho A$ over all such $(\varrho,\alpha)$.
It is unclear whether this can be done syntactically, but semantically it is unproblematic.
When a left multi-lifting exists, this limit reduces to the \emph{product} of $x :^\varrho A$ over the elements of the multi-lifting.
And if there are \emph{no} such $\varrho$, the limit is a terminal object and $A$ is simply deleted, as happens to $\Delta$ in \cref{fig:split-rule}.

This is the essential idea of co-dextrification.
It is formally similar to Hofmann's ``right adjoint splitting''~\cite{hofmann:ttinlccc} for strict pullbacks, suggesting it can similarly be regarded as a sort of \emph{coherence theorem}.

The co-dextrification does require each $\C_p$ to have, and each $\C_\mu$ to preserve, limits of the size of $\M$.
This is unproblematic if $\M$ is finite, but modal operators often come in adjoint pairs (e.g.\ as geometric morphisms of topoi), and as soon as $\M$ contains a generic adjunction it is infinite.
Fortunately, if some $\C_\mu$ has a right adjoint, that adjoint automatically lifts to a \emph{dependent right adjoint} of $\Chat_\mu$.
Thus, it suffices to apply co-dextrification over a smaller 2-category $\L$ that generates $\M$ by adding some right adjoints.

The resulting type theory represents the morphisms in $\L$ by positive modalities as in MTT, but their right adjoints by negative modalities as in FitchTT.
(For a particular $\L$, such a combination appeared in~\cite{cavallo:thesis}.)
The positive elimination rules also restrict which morphisms of $\M$ can appear as ``framings'': this would be problematic for internalizing functoriality, except for the stronger elimination rule of the negative modalities.
We call this theory \textbf{Multimodal Adjoint Type Theory (MATT)}.
If we regard $\L$, rather than $\M$, as the fundamental parameter of MATT, then it restores the symmetry of~\cite{ls:1var-adjoint-logic,lsr:multi} in which each morphism (of $\L$) generates a positive/negative pair of modalities that are automatically adjoint.

\begin{ack}
  I am extremely grateful to Daniel Gratzer, for many long and illuminating conversations about modal type theories, for many concrete suggestions about MATT (including the name), and for careful reading and bugfixes.
  Dan Licata also contributed useful ideas to some of these conversations.
\end{ack}

\section{Multimodal Adjoint Type Theory}
\label{sec:matt}

For a 2-category $\M$ we write its objects as $p,q,r,s,\dots$, its morphisms as $\mu,\nu,\varrho,\sigma,\dots$, and its 2-cells as $\alpha,\beta,\dots$.
We use $\circ$ for both composition of morphisms and vertical composition of 2-cells, and write $\mu\triangleleft \beta$ and $\alpha\triangleright\nu$ for whiskering.
We will not use horizontal composition of 2-cells.

Although our semantics will have a mode theory with right adjoints added freely, it is simpler to formulate syntax using an arbitrary 2-category $\M$ equipped with placeholders for the necessary restrictions.

\begin{definition}
  An \textbf{adjoint mode theory} is a 2-category $\M$ equipped with four classes of morphisms in $\M$ called \textbf{tangible}, \textbf{sharp}, \textbf{transparent}, and \textbf{sinister}, such that
  % \begin{enumerate}
  % \item A class of morphisms called \textbf{tangible}.
  %   These can annotate variables in the context and in the domains of modal function-types, and give rise to covariant positive modal operators.
  % \item A class of morphisms called \textbf{transparent}.
  %   These can act as the ``framing'' modality in the elimination rule of other positive modal operators.
  % \item A class of morphisms called \textbf{sinister}.
  %   These are the morphisms that can give rise to contravariant negative modal operators.
  % \end{enumerate}
  % These data are required to satisfy the following axioms.
  \begin{itemize}%[resume]
  \item Every identity morphism is transparent and sharp.
  \item If $\mu:p\to q$ is sharp and $\nu:q\to r$ is transparent, then $\nu\circ\mu : p\to r$ is tangible.
    (Thus, every transparent or sharp morphism, and in particular every identity morphism, is tangible.)
  \item Every sinister morphism $\mu:p\to q$ has a right adjoint $\mu^\dagger:q\to p$ in $\M$, with unit $\eta_\mu : 1 \To \mu^\dagger \circ\mu$ and counit $\epsilon_\mu : \mu\circ \mu^\dagger \To 1$.
  \end{itemize}
\end{definition}

MATT over an adjoint mode theory $\M$ is MTT~\cite{gknb:mtt} over $\M$ with a few modifications.
We write $x :^\mu A$ in place of $x:(\mu\mid A)$, and $\F{\mu}{A}$ in place of $\langle \mu \mid A \rangle$.
We will show the most important MTT rules, but we omit technical details of substitutions.
We now list the substantive modifications.
\begin{enumerate}[label=(\arabic*)]
\item The modalities annotating variables in contexts must be tangible.
  Tangibility of identities yields ordinary type theories at each mode.
  The context rules are shown in \cref{fig:ctx}, along with a substitution rule that combines functoriality and naturality (the other substitution rules are more ordinary), and the variable-use rule in \cref{fig:var} along with the rule for substituting keys into variables.%
  \footnote{The latter is not fully precise, e.g.\ we have not defined the ``weakening'' substitution $\uparr^\alpha$.
    In the formal presentation of~\cite{gknb:mtt} there is only a zero-variable, to which can be applied substitutions involving 2-cell keys and weakening.}
  % We write just $x$ instead of $x^1$ when the 2-cell in the variable rule is an identity.
\begin{figure}
  \centering
  \begin{mathpar}
  \infer{ }{\ec_p \ctx_p}
  \and
  \infer{\Gamma \ctx_q \\ \mu : p\to q}{\Gamma\divby\mu \ctx_p}
  \and
  \infer{\Gamma \ctx_q \\ \mu : p\to q\tangible \\ \Gamma\divby\mu \vdash A \type_p}{(\Gamma, x:^\mu A) \ctx_q}  
  \and
  \infer{\Gamma \ctx_r \\ \mu : q\to r \\ \nu : p \to q}{\Gamma\divby\mu\divby\nu = \Gamma\divby{(\mu\circ\nu)}}
  \and
  \infer{\Gamma \ctx_p}{\Gamma\divby{1_p} = \Gamma}
  \and
    \infer{ \theta: \Gamma \to_q \Delta \\ \mu,\nu : p\to q \\ \alpha : \mu \To \nu}
    {\keyby\theta\alpha : \Gamma\divby\nu \to_p \Delta\divby\mu}
\end{mathpar}
  \caption{Contexts and substitutions in MATT}
  \label{fig:ctx}
\end{figure}
% \begin{figure}
%   \centering
%   \begin{mathpar}
%     \infer{ \Gamma \ctx_q \\ \mu,\nu : p\to q \\ \alpha : \mu \To \nu}
%     {\key^\alpha_\Gamma : (\Gamma,\lock_\nu) \to_p (\Gamma,\lock_\mu)}
%   \end{mathpar}
%   \caption{Key substitutions in MATT}
%   \label{fig:key}
% \end{figure}
\item The modalities $\mu$ that annotate domains of function-types $(x :^\mu A) \to B$ must be sharp.
  Sharpness of identities yields ordinary function-types, and tangibility of sharp morphisms is required for the formation and introduction rules.
  All the rules are shown in \cref{fig:func}.
\item The modalities $\mu$ that generate positive modal operators $\F\mu A$ must be sharp, and the ``framing'' modality in its elimination rule must be transparent.
  The rules for positive modal operators are shown in \cref{fig:pos-mod}.
  % We restate that rule here with the restriction:
  % \begin{mathpar}
  %   \infer{\mu : p\to q \issharp \\ \varrho : q\to r \transparent \\ \Gamma, x :^\varrho \F\mu A \vdash B \type_r \\ \Gamma,\lock_\varrho \vdash M_0 : \F\mu A \\ \Gamma, u :^{\varrho\circ\mu} A \vdash M_1 : B[\mod_\mu(u)/x]}{\Gamma \vdash \flet{\varrho}{\mod_\mu(u)}{M_0}{M_1} : B[M_0/x] }
  % \end{mathpar}
  The elimination rule requires both transparent morphisms, and composites of transparent and sharp morphisms, to be tangible.
  % Transparency of identities ensures we have a ``trivially framed'' elimination rule.
\item Every sinister morphism generates a \emph{negative} modal operator.
  These are not in MTT.
  Their rules are shown in \cref{fig:neg-mod}; they simplify those of~\cite{gckgb:fitchtt} by using right adjoints instead of parametric ones.
\end{enumerate}

\begin{figure}
  \centering
  \begin{mathpar}
    \locks(\ec_p) = 1_p \and
    \and
    \locks(\Gamma, x :^\mu A) = \locks(\Gamma)
    \and
    \locks(\Gamma\divby\mu) = \locks(\Gamma)\circ \mu
    \and
    \infer{\alpha : \mu \To \locks(\Delta)}{\Gamma, x:^\mu A, \Delta \vdash x^\alpha : A[\uparr^\alpha]}
    \and
    \infer{\alpha : \mu \To \locks(\Delta)\circ \nu \\ \beta : \nu \To \varrho}
    {(\Gamma, x:^\mu A, \Delta)\divby\varrho \vdash x^\alpha[\keyby{1_{(\Gamma,x:^\mu A,\Delta)}}\beta] = x^{(\locks(\Delta) \triangleright \beta) \circ \alpha}}
  \end{mathpar}
  \caption{Variables in MATT}
  \label{fig:var}
\end{figure}

\begin{figure}
  \centering
\begin{mathpar}
  \infer{\mu:p\to q\issharp \\
         \Gamma\divby{\mu} \vdash A \type_p \\ \Gamma, x:^\mu A \vdash B \type_q}
  {\Gamma \vdash (x:^\mu A) \to B \type_q}
  \and
  \infer{\mu:p\to q\issharp \\
    \Gamma\divby{\mu} \vdash A \type_p \\ \Gamma, x:^\mu A \vdash b : B}
  {\Gamma \vdash (\lambda x.b) : (x:^\mu A) \to B}
  \and
  \infer{\mu:p\to q\issharp \\
    \Gamma \vdash f : (x:^\mu A) \to B \\ \Gamma\divby\mu \vdash a:A}
  {\Gamma \vdash f\,a : B[x\leftarrow a]}
  \and
  \infer{\mu:p\to q\issharp \\
    \Gamma, x:^\mu A \vdash b : B \\ \Gamma\divby\mu \vdash a:A}
  {\Gamma \vdash (\lambda x.b)\,a = b[x\leftarrow a] : B[x\leftarrow a]}
  \and
  \infer{%\Gamma \vdash f : \textstyle\prod_{x:^\mu A} B \\ \Gamma \vdash g : \textstyle\prod_{x:^\mu A} B \\
    \mu:p\to q\issharp \\
  \Gamma , x :^\mu A \vdash f\,x = g\, x : B}
  {\Gamma \vdash f = g : (x:^\mu A) \to B}
\end{mathpar}
  \caption{Modal function-types in MATT}
  \label{fig:func}
\end{figure}

\begin{figure}
  \centering
\begin{mathpar}
  \infer{\mu : p \to q\issharp \\ \Gamma\divby\mu \vdash A \type_p}{\Gamma \vdash \F\mu A \type_q}
  \and
  \infer{\mu : p \to q\issharp \\ \Gamma\divby\mu \vdash a : A}{\Gamma \vdash \mod_{\mu}(a) : \F\mu A}
  \and
  \infer{\mu : p\to q\issharp \\ \nu : q \to r\transparent \\ \Gamma\divby\nu \vdash d : \F\mu A\\\\
    \Gamma, y:^{\nu} \F\mu A \vdash B \type_r \\
    \Gamma, x:^{\nu\circ\mu}A \vdash b :B[y\leftarrow \mod_\mu(x)]}
  {\Gamma \vdash \flet{\nu}{\mod_\mu(x)}{d}{b} : B[y\leftarrow d]}
  \and
  \infer{\mu : p\to q\issharp \\ \nu : q \to r\transparent \\ \Gamma\divby{(\nu\circ\mu)} \vdash a : A\\\\
    \Gamma, y:^{\nu} \F\mu A \vdash B \type_r \\
    \Gamma, x:^{\nu\circ\mu}A \vdash b :B[y\leftarrow \mod_\mu(x)]}
  {\Gamma \vdash (\flet{\nu}{\mod_\mu(x)}{\mod_\mu(a)}{b}) = b[x\leftarrow a]}
\end{mathpar}
  \caption{Positive modalities in MATT}
  \label{fig:pos-mod}
\end{figure}

\begin{figure}
  \centering
  \begin{mathpar}
    \infer{\mu : p\to q\sinister \\
      \Gamma\divby{\mu^\dagger} \vdash A \type_q}{\Gamma \vdash \U\mu A \type_p}
    \and
    \infer{\mu : p\to q \sinister \\
      \Gamma\divby{\mu^\dagger} \vdash M:A}{\Gamma \vdash \shut\mu{M} : \U\mu A}
    \and
    \infer{\mu : p\to q \sinister \\
      \Gamma\divby\mu \vdash M : \U\mu A}{\Gamma \vdash \open\mu{M} : A[\keyby{1_\Gamma}{\epsilon_\mu}]}
    \and
    \infer{\mu : p\to q \sinister \\
      \Gamma\divby{(\mu\circ{\mu^\dagger})} \vdash M : A}{\Gamma \vdash \open\mu{\shut\mu{M}} = M[\keyby{1_\Gamma}{\epsilon_\mu}] : A[\keyby{1_\Gamma}{\epsilon_\mu}]}
    \and
    \infer{\mu : p\to q \sinister \\
      \Gamma\divby{\mu^\dagger} \vdash \open\mu{M[\keyby{1_\Gamma}{\eta_\mu}]} = \open\mu{N[\keyby{1_\Gamma}{\eta_\mu}]} : A}
    {\Gamma\vdash M = N : \U\mu A}
  \end{mathpar}
  \caption{Negative modalities in MATT}
  \label{fig:neg-mod}
\end{figure}

\begin{arxiv}
\begin{unremark}
  Nothing in syntax mandates that the same class of mode morphisms be allowed to annotate domains of function-types and to form positive modalities (the sharp maps), but these classes coincide in all the models I know of.
  In co-dextrification models, the sharp maps also coincide with the tangible ones.
\end{unremark}
\end{arxiv}

\begin{remark}
  If $\mu$ is both sharp and sinister, the formation and introduction rules of $\U\mu A$ are identical to those of $\F{{\mu^\dagger}} A$.
  Daniel Gratzer has shown that $\U\mu A$ actually satisfies all the rules of $\F{{\mu^\dagger}} A$, while conversely if $\mu$ is transparent then $\F{{\mu^\dagger}} A$ satisfies all the rules of $\U\mu A$ except definitional $\eta$-conversion.
\end{remark}

\begin{arxiv}
\begin{unremark}\label{rmk:dra}
  We have written the $\eta$-rule for negative modalities in the form that would presumably be used by a bidirectional conversion-checking algorithm.
  However, an equivalent way to state it is:
  \begin{mathpar}
    \infer{\Gamma\vdash M : \U\mu{A}}{\Gamma \vdash M = \shut\mu{\open\mu{{M[1_\Gamma\divby{\eta_\mu}]}}} : \U\mu A}
  \end{mathpar}
  Then we can see that, as observed in~\cite{gckgb:fitchtt}, in the presence of the formation and introduction rules for $\U\mu A$, the other rules are equivalent to making the introduction rule invertible, which is to say that $\U\mu{-}$ is a dependent right adjoint~\cite{bcmmps:dep-radj} of $\lock_{\mu^\dagger}$.
  Given the other rules, the inverse of introduction is
  \begin{mathpar}
    \infer{\Gamma\vdash M : \U\mu{A}}{\Gamma\divby{\mu^\dagger} \vdash \open\mu{{M[1_\Gamma\divby{\eta_\mu}]}} : A}
  \end{mathpar}
  The $\beta$- and $\eta$-rules imply directly that this is an inverse.
  On the other hand, assuming the introduction rule to be invertible with inverse written $\shutinv{\mu}{M}$, say, we can define the elimination rule by
  \begin{mathpar}
    \infer*{
      \infer*{\Gamma,\lock_\mu \vdash M:\U\mu{A}}{\Gamma,\lock_\mu,\lock_{\mu^\dagger} \vdash \shutinv{\mu}{M} : A}}
    {\Gamma \vdash (\shutinv{\mu}{M})[1_\Gamma\divby{\epsilon_\mu}] : A[1_\Gamma\divby{\epsilon_\mu}]}.
  \end{mathpar}
  The $\beta$- and $\eta$-rules then follow.
\end{unremark}
\end{arxiv}

The flexibility in choosing the tangible, sharp, transparent, and sinister morphisms allows us to compare MATT easily to other modal type theories.
\begin{enumerate}
\item If $\M$ is any 2-category, and we take all morphisms to be tangible, sharp, and transparent, but none to be sinister, then MATT reduces to MTT.
\item For any 2-category $\L$, let $\M = \radj{\L}$ be obtained by formally adjoining a \emph{left} adjoint $\prescript{\dagger}{}{\mu}$ to each $\mu$ in $\L$.
  We take only identities to be tangible, sharp, and transparent, and the sinister morphisms to be these left adjoints $\prescript{\dagger}{}{\mu}$; then MATT reduces to FitchTT~\cite{gckgb:fitchtt} over $\L$ with actual left adjoints.
\item The closest match with theories such as~\cite{lsr:multi,shulman:bfp-realcohesion,mr:commuting-cohesions} occurs when $\M = \ladj{\L}$ is obtained by formally adjoining a \emph{right} adjoint $\mu^\dagger$ to each morphism $\mu$ of $\L$.
  In this case we take the tangible, sharp, and sinister morphisms to be the image of $\L$ in $\ladj{\L}$; thus all the modal operators come in adjoint pairs.

  \quad Different theories make different choices about transparency: in~\cite{shulman:bfp-realcohesion} only identities are transparent, while in~\cite{mr:commuting-cohesions} the transparent morphisms are also the image of $\L$.
  But in fact, if a morphism is both sinister and tangible, then it ``might as well'' be transparent, in that elimination rules with it as framing can be deduced from those with identity framing; the proof follows~\cite[Lemma 5.1]{shulman:bfp-realcohesion}.\label{item:ldiv}
\end{enumerate}

\medskip\noindent
Our semantics in the co-dextrification will apply to the following case.

\begin{example}\label{eg:ls}
  Let $\L$ be any 2-category and $\cS$ a class of morphisms in it, and let $\M = \LS$ be the result of freely adjoining a right adjoint $\mu^\dagger$ for every morphism $\mu$ in $\cS$.
  %\footnote{These adjoints are not strictly functorial: we have $(\mu\circ\nu)^\dagger \cong \nu^\dagger \circ \mu^\dagger$, but this is not an equality.}
  We identify $\L$ with its image in $\LS$.
  We take this image $\L$ to be the transparent morphisms, $\cS$ to be the sinister morphisms, and the tangible and sharp morphisms to be those that are isomorphic
  %\footnote{Since $(1_p)^\dagger$ is only isomorphic to $1_p$, allowing isomorphs or some such modification is necessary to ensure that all morphisms in $\L$, and hence all identities, are tangible.}
  to one of the form $\mu\circ \nu^\dagger$ where $\mu\in\L$ and $\nu\in\cS$.
%
% This choice of $\cS$ as sinister morphisms is obvious, while
This choice of tangible and sharp morphisms appears necessitated by our semantics (see \cref{thm:lw-modal}), and $\L$ is then the largest class of transparent morphisms satisfying the composition axiom.
\end{example}

\begin{assumption}\label{assume:ls}
  % Henceforth, if $(\L,\cS)$ are given,
  We always consider $\LS$ to be an adjoint mode theory as in \cref{eg:ls}.
\end{assumption}

\begin{arxiv}
  \begin{unremark}
    Of course, a model of MATT over some $\M$ remains a model if we shrink any of the three classes of morphisms.
    In particular, the co-dextrification also models MATT instantiated as in~\cref{item:ldiv} above, and this is the most common use case.
    We treat the more general class of tangible morphisms $\mu\circ \nu^\dagger$, which is not much more difficult semantically, out of a desire for the greatest reasonable generality.

    I only know of one application of this generality.
    Specifically, in MTT the positive modalities and their introduction and elimination rules can be given internal modal function-types involving universes:
    \begin{align*}
      \F\mu{\blank} &: (A :^\mu \Type_p) \to \Type_q\\
      \mod_\mu &: \{A :^\mu \Type_p\} (a :^\mu A) \to \F\mu A\\
      \mathsf{let}_{\nu,\mu} &:
      % \begin{multlined}[t]
      \{A :^\mu \Type_p\} (B : (y :^\nu \F\mu A) \to \Type_r)  (b : (x :^{\nu\circ\mu} A) \to B(\mod_{\mu}(x))) (d :^{\nu} \F\mu A) \to B\,d
      % \end{multlined}
    \end{align*}
    But to do the same for the negative modalities in MATT requires more general annotations of this sort.
    \begin{align*}
      \U\mu{\blank} &: (B :^{\mu^\dagger} \Type_q) \to \Type_p\\
      \shut{\mu}{\blank} &: \{B :^{\mu^\dagger} \Type_q\}(b :^{\mu^\dagger} B) \to \U\mu B\\
      \open{\mu}{\blank} &: \{B :^{\mu\circ\mu^\dagger} \Type_p\}(c :^{\mu} \U\mu B) \to B^{\epsilon_\mu}.
    \end{align*}
  \end{unremark}
\end{arxiv}

\begin{example}\label{eg:2ltt}
  We can regard Two-Level Type Theory~\cite{acks:2ltt} as an instance of MATT with two modes, $\f$ for (fibrant/inner) types and $\e$ for (non-fibrant/outer) exotypes, and an \emph{isomorphism} $\iota : \e\cong \f$.
  We let all the morphisms be tangible,
  % ; then types of either mode can appear freely in the context of the other, in a unique way.
  but we take only identities as sharp and transparent, and only the morphism $\iota : \e\to \f$ as sinister.
  Then $\U\iota-$ is the coercion from types to exotypes ($c$ in~\cite{acks:2ltt}), with a bijection between terms of types $A$ and $\U\iota A$.
  Allowing $\iota$ to be sharp would produce fibrant replacements $\F\iota A$, which are inconsistent~\cite[\S2.7]{acks:2ltt} with  univalence for fibrant types and UIP for exotypes.
  Inspecting the proof shows that the same conclusion would follow if we had modal function-types $(x :^\iota A) \to B$.
\end{example}

\begin{remark}\label{rmk:norm}
  It seems likely that normalization for MTT~\cite{gratzer:norm-mtt} extends to MATT.
  But to deduce decidability of type-checking from this requires decidability of equality for $\M$, whereas $\LS$ can fail to have decidable equality even if $\L$ does~\cite{dpp:undecidable-free-adjoint}.
  However, we can hope that $\LS$ will have decidable equality if $\L$ is, say, locally finite (this is true for for 1-categories~\cite{dpp:adj-adjoints}).
\end{remark}

\section{Natural models of MATT}
\label{sec:tt-cofree}

We now generalize the \emph{modal natural models} of~\cite{gknb:mtt} to MATT.
We first recall some definitions.
\begin{itemize}
\item A \textbf{natural model}~\cite{awodey:natmodels} is a representable morphism $\tau : \Tm \to \Ty$ in a presheaf category $\Pr\D$.
Thus for any $A\in \Ty(\Gamma)$ we have an object $\Gamma\ce A \in\D $, a morphism $\p_A : \Gamma\ce A\to \Gamma$, and a pullback square
\begin{equation}
  \begin{tikzcd}
    \yon(\Gamma\ce A) \ar[r] \ar[d,"\p_A"'] \drpullback & \Tm \ar[d,"\tau"] \\
    \yon(\Gamma) \ar[r,"A"'] & \Ty
  \end{tikzcd}\label{eq:comp-pb}
\end{equation}
where $\yon:\D \to \Pr\D$ denotes the Yoneda embedding.
A natural model is equivalent to a category with families.
We refer to $\Gamma\ce A$ as the \emph{comprehension} of $A$, and $\p_A$ as its \emph{type projection}.
\item A \textbf{modal context structure}~\cite[Definition 5.1]{gknb:mtt} is a 2-functor $\D : \M\coop\to\cat$ such that each $\D_p$ has a terminal object.
  We write its action on morphisms and 2-cells as $\D^\mu$ and $\D^\alpha$ respectively.
\item A \textbf{modal natural model}~\cite[Definition 5.4]{gknb:mtt} is a modal context structure $\D$ with a morphism $\tau_p : \Tm_p \to \Ty_p$ in each presheaf category $\Pr\D_p$, such that for any $\mu:p\to q$ in $\M$, the transformation $(\D^\mu)^* \tau_p$ is representable in $\Pr\D_q$.
  (Taking $\mu=1_p$, this implies that each $\D_p$ is a natural model.)
  We write the comprehension of $A \in \tau_p(\D^\mu(\Gamma))$ as $\p^\mu_A : \Gamma\mce{\mu}A \to \Gamma$, and write $\Gamma\mce{1}A$ as $\Gamma\ce A$.
\end{itemize}

\begin{arxiv}
  A modal natural model provides the structure of contexts, substitutions, and variables in MTT.
  For MATT, we simply impose the tangibility restriction.
\end{arxiv}

\begin{definition}
  Let $\M$ be an adjoint mode theory.
  A modal context structure $\D : \M\coop\to\cat$ is an \textbf{adjoint modal natural model} if we have a morphism $\tau_p : \Tm_p \to \Ty_p$ in each $\Pr\D_p$ such that $(\D^\mu)^* \tau_p$ is representable for all \emph{tangible} $\mu$.
  (Since identities are tangible, each $\D_p$ is still a natural model.)
\end{definition}

\begin{arxiv}
  We now similarly modify and unpack the modal type-formers of MTT.
  For later use, we also unpack the definitions more explicitly from the natural-model style given in~\cite{gknb:mtt}.
\end{arxiv}

\begin{definition}({See \cite[\S5.2.1]{gknb:mtt}})
  A \textbf{$\Pi$-structure} on an adjoint modal natural model $\D$ consists of, for any \emph{sharp} $\mu:p\to q$, and any $\Gamma\in \D_q$ and $A\in \Ty_p(\D^\mu(\Gamma))$ with $B\in \Ty_q(\Gamma\mce{\mu} A)$, a type $\Pi(A,B)\in \Ty_q(\Gamma)$ such that $\Gamma\ce\Pi(A,B)$ is a pushforward of $\Gamma\mce{\mu} A \ce B$ along $\p_A : \Gamma\mce{\mu} A \to A$, all natural in $\Gamma$.
\end{definition}

\begin{arxiv}
  The notion of ``equipped with the structure of a pushforward'' is cleanly expressed in the language of~\cite{weber:poly-pb} by saying we have a \emph{distributivity pullback}
  \[\begin{tikzcd}
      \bullet \ar[r] \ar[d] \drpullback & \Gamma \mce{\mu} A \ce B \ar[r,"\p_B"] & \Gamma\mce{\mu} A \ar[d,"\p_A"] \\
      \Gamma\ce\Pi(A,B)\ar[rr] & {} & \Gamma.
    \end{tikzcd}
  \]
  This means that this pullback square is terminal among such pullback squares with $\p_A$ and $\p_B$ fixed (known as ``pullbacks around $(\p_A,\p_B)$'').
\end{arxiv}

\begin{definition}({See~\cite[\S5.2.2]{gknb:mtt}})\label{defn:pos}
  An adjoint modal natural model $\D$ has \textbf{positive modalities} if for any \emph{sharp} $\mu:p\to q$ we have:
  \begin{enumerate}
  \item For any $\Gamma\in \D_q$ and $A\in \Ty_p(\D^\mu(\Gamma))$, we have a type $\F\mu A \in \Ty_q(\Gamma)$ and a map $j^\mu_{\Gamma,A} : \Gamma \mce{\mu} A \to \Gamma\ce (\F{\mu}{A})$ over $\Gamma$,
    % making a commutative triangle:
    % \[
    %   \begin{tikzcd}[column sep=small]
    %     \Gamma \mce{\mu} A \ar[dr] \ar[rr,dashed,"j^\mu_{\Gamma,A}"] & & \Gamma\ce (\F{\mu}{A}) \ar[dl] \\
    %     & \Gamma
    %   \end{tikzcd}
    % \]
    all varying naturally in \(\Gamma\).\label{item:pos-form-intro}
    % A commutative square:
    % \[
    %   \begin{tikzcd}
    %     \lock_\mu^* \Tm_p \ar[r,"j^{\mu}"] \ar[d] & \Tm_q \ar[d,"\tau_q"]\\
    %     \lock_\mu^* \Ty_p \ar[r,"\F\mu{}"'] & \Ty_q.
    %   \end{tikzcd}
    % \]
  \item For any \emph{transparent} $\varrho : q\to r$ and $\Gamma\in \D_r$ with $A\in \Ty_p(\D^{\varrho\circ\mu}(\Gamma))$, define the dashed map $\ell$ below by the universal property of pullbacks and full-faithfulness of $\yon$:
    \[\small
      \begin{tikzcd}
        \yon(\Gamma \mce{\varrho\circ\mu} A) \ar[r,dashed,"\yon(\ell)"] \ar[d] &
        \yon(\Gamma\mce{\varrho} (\F\mu A)) \ar[r] \ar[d]\drpullback &
        (\D^\varrho)^* \Tm_q \ar[d] \\
        \yon(\Gamma) \ar[r,equals] &
        \yon(\Gamma) \ar[r,"\F\mu A"'] &
        (\D^\varrho)^* \Ty_q
      \end{tikzcd}
      \; = \;
      \begin{tikzcd}
        \yon(\Gamma \mce{\varrho\circ\mu} A) \ar[r] \ar[d] \drpullback &
        (\D^{\varrho\circ\mu})^* \Tm_p  \ar[d] \ar[rr,"(\D^\varrho)^*(j^\mu)"] &&
        (\D^\varrho)^* \Tm_q \ar[d] \\
        \yon(\Gamma) \ar[r,"A"'] &
        (\D^{\varrho\circ\mu})^*\Ty_p \ar[rr,"(\D^\varrho)^*(\F{\mu}{-})"'] &&
        (\D^\varrho)^* \Ty_q.
      \end{tikzcd}
    \]
    Then for any commutative square as below there is a chosen diagonal filler, natural in $\Gamma$:
    % , we require that for any $B \in \Ty_r(\Gamma\mce{\varrho} (\F\mu A))$, and any map making the top of the following square commute, there exists a diagonal filler that varies naturally in $\Gamma$:
    \label{item:pos-elim-comp}
    \begin{equation*}
      \begin{tikzcd}
        \Gamma\mce{\varrho\circ\mu} A \ar[r] \ar[d,"\ell"'] &
        \Gamma\mce{\varrho} (\F\mu A) \ce B \ar[d,"\p_B"] \\
        \Gamma\mce{\varrho} (\F\mu A) \ar[r,equals] \ar[ur,dashed] &
        \Gamma\mce{\varrho} (\F\mu A)
      \end{tikzcd}%\label{eq:pos-elim-comp}
    \end{equation*}
  \end{enumerate}
\end{definition}

% (In~\cite[\S5.2.2]{gknb:mtt} this is phrased in terms of lifting structures in presheaves.)

%Following the methodology of~\cite{lw:localuniv}, we observe that this kind of strictly stable structure can be constructed from weakly stable structure.
% As before, we will construct this by strictifying some more categorically natural structures.

\begin{definition}(See~\cite[Definition 4]{gckgb:fitchtt})
  An adjoint modal natural model $\D$ has \textbf{negative modalities} if for any \emph{sinister} $\mu:p\to q$, the functor $\D^{\mu^\dagger}$ has a \emph{dependent right adjoint}~\cite{bcmmps:dep-radj}, i.e.\ there is a pullback square
\[
  \begin{tikzcd}
    (\D^{\mu^\dagger})^* \Tm_q \ar[r] \ar[d,"(\D^{\mu^\dagger})^*\tau_q"'] \drpullback & \Tm_p\ar[d,"\tau_p"] \\
    (\D^{\mu^\dagger})^*\Ty_q \ar[r] & \Ty_p.
  \end{tikzcd}
\]
\end{definition}

\begin{example}\label{eg:2ltt-model}
  Let $\M$ be the adjoint mode theory for Two-Level Type Theory from \cref{eg:2ltt}, and let $\C$ be a \emph{two-level model} as in~\cite[Definition 2.8]{acks:2ltt}.
  If we ignore universes, this means it has two natural models $\tauf : \Tmf \to \Tyf$ and $\taue : \Tme \to \Tye$, and that $\tauf$ is a pullback of $\taue$.
  % \begin{equation}
  %   \label{eq:2ltt-coe}
  %   \begin{tikzcd}
  %     \Tmf \ar[d,"\tauf"'] \ar[r] \drpullback & \Tme \ar[d,"\taue"] \\
  %     \Tyf \ar[r,"\mathrm{c}"'] & \Tye.
  %   \end{tikzcd}
  % \end{equation}
  Let $\D: \M\coop\to\cat$ be constant at $\C$, but where $\D_{\f}=\C$ is equipped with $\tauf$ while $\D_{\e} = \C$ is equipped with $\taue$.
  This is an adjoint modal natural model with negative modalities, since the assumption that $\tauf$ is a pullback of $\taue$ says exactly that the identity functor $(\C,\taue) \to (\C,\tauf)$ has a dependent right adjoint.
  % Thus, MATT faithfully represents the structure of 2LTT semantically as well as syntactically.
\end{example}

\section{Co-dextrification}
\label{sec:cofree}

\begingroup
\def\M{\L}

\begin{assumption}\label{assume:limits}
  For all of this section, let $\M$ be an arbitrary 2-category, let $\C:\M \to \cat$ be a pseudofunctor, and let $\kappa$ be an infinite regular cardinal such that $\M$ is $\kappa$-small, each category $\C_p$ has $\kappa$-small limits, and each functor $\C_\mu : \C_p\to \C_q$ preserves $\kappa$-small limits.
  Often, $\kappa$ will be $\omega$.
\end{assumption}

\begin{arxiv}
  \begin{unremark}
    In fact, it suffices if all the comma categories over which we take limits below, which are constructed from the hom-categories of $\M$, admit some initial functor from a $\kappa$-small category (when $\kappa=\omega$ this is called being L-finite~\cite{pare:sclim}).
    For instance, if all the relevant left liftings exist, then these categories have initial objects, hence satisfy this condition no matter how large the hom-categories of $\M$ are.
  \end{unremark}
\end{arxiv}

% \subsection{The oplax limit as a modal context structure}
% \label{sec:oplax-limit}

\begin{definition}
  For $r\in \M$, let $\slice\M// r$ denote the \textbf{lax slice 2-category}:
  \begin{itemize}
  \item Its objects are morphisms $\mu : p\to r$ in $\M$.
  \item Its morphisms from $\mu : p\to r$ to $\nu:q\to r$ are pairs $(\varrho:p\to q,\; \alpha : \mu \To \nu\circ\varrho)$.
  \item Its 2-cells from $(\varrho,\alpha)$ to $(\sigma,\beta)$ are 2-cells $\gamma :\varrho \To \sigma$ such that $(\nu\triangleleft\gamma)\circ \alpha = \beta$.
  \end{itemize}
\end{definition}

\noindent
By postcomposition, we have a 2-functor
\(\slice\M//- : \M \to \twocat\), with projection functors $\pi_r : \slice\M//r \to \M$. %, forming a cocone under this 2-diagram.

\begin{definition}\label{defn:chat}
  For $r\in\M$, let $\Chat_r$ denote the \textbf{oplax limit} of the $(\slice\M//r)$-shaped diagram $\C \circ \pi_r : \slice\M//r \to \cat$ in $\cat$.
  Thus, an object $\bG\in\Chat_r$ consists of:
  \begin{enumerate}
  \item For each $\mu:p\to r$ in $\M$, an object $\cmpnt\bG\mu \in \C_p$.
  \item For each $\varrho:p\to q$ and $\alpha : \mu \To \nu\circ\varrho$, a morphism
    \( \cmpnt\bG\alpha : \cmpnt\bG\nu \too \C_\varrho(\cmpnt\bG{\mu}) \)
    in $\C_q$.
    (The notation is abusive, as $\cmpnt\bG\alpha$ depends not just on $\alpha$ but on the decomposition of its codomain as a composite.)
  \item For $\alpha = 1_\mu : \mu \To \mu \circ 1_p$, we have $\cmpnt\bG{1_\mu} = 1_{\cmpnt\bG\mu}$.\label{item:chat-id}
    % If $\alpha : \mu \To \nu \circ 1$, then $\Gamma\smkey_\alpha = \Gamma \smlock_\alpha$.
  \item For $\alpha : \mu \To \nu\circ\varrho$ and $\beta : \nu \To \varpi \circ \sigma$, we have
    $\C_\sigma(\cmpnt\bG\alpha) \circ \cmpnt\bG\beta = \cmpnt\bG{(\beta\triangleright\varrho)\circ \alpha}$, modulo pseudofunctoriality.
 % the following diagram commutes:
    \begin{arxiv}
      \begin{equation*}
        \begin{tikzcd}[column sep=large]
          \C_\sigma(\bG^{\nu}) \ar[r,"\C_\sigma(\bG^\alpha)"] &
          \C_\sigma \C_\varrho(\bG^\mu)\\
          \bG^\varpi
          \ar[u,"\bG^\beta"]
          \ar[r,"\bG^{(\beta\triangleright\varrho)\circ \alpha}"'] &
          \C_{\sigma\circ\varrho}(\bG^{\mu}) \ar[u,"\cong"']
        \end{tikzcd}
      \end{equation*}
      In particular, taking $\varrho$ and $\sigma$ to be identities, for each $p\in \M$ we have a functor $\bG^p : \M(p,r)\op \to \C_p$.
    \end{arxiv}
  \item For $\alpha : \mu \To \nu \circ \varrho$ and $\beta : \varrho \To \sigma$, we have
    $\C_\beta(\cmpnt\bG\mu)\circ \cmpnt\bG\alpha = \cmpnt\bG{(\nu\triangleleft\beta)\circ\alpha}$.
    % the following diagram commutes:
    \begin{arxiv}
      \begin{equation*}
        \begin{tikzcd}
          & \C_\varrho(\bG^\mu) \ar[dr,"\C_\beta(\bG^\mu)"] \\
          \bG^\nu \ar[ur,"\bG^\alpha"] \ar[rr,"\bG^{(\nu\triangleleft\beta)\circ\alpha}"'] &&
          \C_\sigma(\bG^\mu)
        \end{tikzcd}
      \end{equation*}
    \end{arxiv}
  \end{enumerate}
  Similarly, a morphism $\bth : \bG \to \bD$ in $\Chat_r$ consists of:
  \begin{enumerate}[resume]
    \item For each $\mu:p\to r$, a morphism $\cmpnt\bth\mu : \cmpnt\bG\mu \to \cmpnt\bD\mu$.\label{item:chat-mor-1}
    \item For $\alpha : \mu \To \nu \circ\varrho$, we have
      $\C_\varrho(\cmpnt\bth\mu) \circ \cmpnt\bG\alpha = \cmpnt\bD\alpha \circ \cmpnt\bth\nu$.
      \label{item:chat-mor-2}
% the following diagram commutes:
      \begin{arxiv}
        \[
          \begin{tikzcd}
            \bG^\nu \ar[r,"\bG^\alpha"] \ar[d,"\bth^\nu"'] &
            \C_\varrho(\bG^\mu) \ar[d,"\C_\varrho(\bth^\mu)"] \\
            \bD^\nu \ar[r,"\bD^\alpha"'] &
            \C_\varrho(\bD^\mu)
          \end{tikzcd}
        \]
      \end{arxiv}
    \end{enumerate}
\end{definition}

\begin{lemma}\label{thm:chat-locks}
  The categories $\Chat_p$ are the action on objects of a modal context structure $\Chat : \M\coop \to \cat$.
  % We write its action on morphisms and 2-cells with $\lock$ and $\key$, respectively, both postfix.
\end{lemma}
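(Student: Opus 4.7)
The plan is to unpack the 2-functoriality of the lax slice construction. For a morphism $\mu : p\to q$ of $\M$, postcomposition with $\mu$ defines a 2-functor $\slice\M//\mu : \slice\M//p \to \slice\M//q$ that strictly commutes with the projections $\pi_p$ and $\pi_q$. I therefore define $\Chat^\mu(\bG)\in\Chat_p$ to have $\varrho$-component $\cmpnt\bG{\mu\circ\varrho}$ for each $\varrho : r\to p$, with structure morphism at $\alpha : \varrho\To\varrho'\circ\sigma$ given by $\cmpnt\bG{\mu\triangleleft\alpha}$, regarded as indexing a morphism in $\slice\M//q$. I would verify each of the remaining three axioms of \cref{defn:chat} in turn for $\Chat^\mu(\bG)$; each reduces to the corresponding axiom for $\bG$, using that whiskering by $\mu$ preserves identity 2-cells, vertical composition, and right whiskering. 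The action on morphisms of $\Chat_q$ is analogous and uses the naturality axiom for $\bth \in \Chat_q$.

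For a 2-cell $\alpha : \mu\To\nu$ in $\M$ with $\mu,\nu : p\to q$, I would define the $\varrho$-component of $(\Chat^\alpha)_\bG : \Chat^\nu(\bG)\to\Chat^\mu(\bG)$ to be $\cmpnt\bG{\alpha\triangleright\varrho}$, viewing $(1_r,\alpha\triangleright\varrho)$ as a morphism $\mu\circ\varrho\to\nu\circ\varrho$ in $\slice\M//q$ so that the oplax cone structure of $\bG$ yields a morphism $\cmpnt\bG{\nu\circ\varrho}\to\cmpnt\bG{\mu\circ\varrho}$ in $\C_r$. That these components assemble into a morphism in $\Chat_p$ (i.e., satisfy the naturality axiom of \cref{defn:chat}) reduces, via two applications of the composition axiom of the oplax cone, to the interchange identity $(\nu\triangleleft\beta)\circ(\alpha\triangleright\varrho) = (\alpha\triangleright(\varrho'\circ\sigma))\circ(\mu\triangleleft\beta)$ in $\M$. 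Naturality in $\bG$ follows directly from the morphism axiom for $\bth : \bG\to\bD$ applied to the 2-cell $\alpha\triangleright\varrho$ with identity whiskering.

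The 2-functoriality of $\Chat : \M\coop\to\cat$ is then straightforward: $\Chat^{1_p}$ is the identity modulo the identity axiom of \cref{defn:chat}; $\Chat^{\nu\circ\mu}$ agrees with $\Chat^\mu\circ\Chat^\nu$ via the pseudofunctor coherences of $\C$ (using $(\nu\circ\mu)\circ\varrho = \nu\circ(\mu\circ\varrho)$); and vertical composition and whiskering of 2-cells are preserved by direct calculation from the compatibility axioms. Finally, each $\Chat_p$ has a terminal object: by \cref{assume:limits} each $\C_r$ has a terminal object and each $\C_\sigma$ preserves it, so the oplax cone whose component at every $\mu : r\to p$ is a chosen terminal object of $\C_r$, with every $\cmpnt\bG\alpha$ the unique such morphism, is a terminal object of $\Chat_p$.

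The main obstacle is purely bookkeeping: four layers of variance (the reversal under $\coop$, the direction of morphisms in the lax slice, the oplax cone convention, and the pseudofunctor coherences of $\C$) interact, so keeping them aligned is more delicate than any individual verification. Once the dictionary between ``morphism in $\slice\M//r$'' and ``2-cell in $\M$ together with a factorization of its codomain'' is fixed, every axiom unwinds directly.
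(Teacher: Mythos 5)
Your proposal is correct and matches the paper's proof, which defines the functorial action by exactly the same formulas ($\cmpnt{(\Chat^\mu(\bG))}{\nu} = \cmpnt\bG{\mu\circ\nu}$ and $\cmpnt{(\Chat^\beta(\bG))}\varrho = \cmpnt\bG{\beta\triangleright\varrho}$) and leaves the axiom-checking implicit. Your additional verification of the terminal object (taken pointwise, using \cref{assume:limits}) fills in a detail the paper's one-line proof omits but is entirely in the same spirit.
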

\begin{proof}
  The functorial action is by composition: $\cmpnt{(\Chat^\mu(\bG))}{\nu} = \cmpnt\bG{\mu\circ\nu}$ and $\cmpnt{(\Chat^\beta(\bG))}\varrho = \cmpnt\bG{\beta\triangleright\varrho}$.
\end{proof}

For $\mu:p\to q$, write $\reflect \mu : \Chat_q \to \C_p$ for the functor defined by $\reflect \mu (\bG) = \cmpnt\bG\mu$.

\begin{lemma}\label{thm:chat-lim}
  Each $\Chat_p$ has $\kappa$-small limits, and each functor $\reflect\mu$ and $\Chat^\mu$ preserves them.
  Furthermore:
  \begin{enumerate}
  \item If each $\C_p$ has some shape of colimits, then so does each $\Chat_p$, and each $\reflect\mu$ and $\Chat^\mu$ preserves them.\label{item:chat1}
  \item If each $\C_p$ is locally cartesian closed or an elementary topos, so is each $\Chat_p$.\label{item:chat2}
  \item If each $\C_p$ is locally presentable, and each $\C_\mu$ is accessible, then each $\Chat_p$ is also locally presentable.\label{item:chat3}
  \item If each $\C_p$ is a Grothendieck topos, and each $\C_\mu$ is an inverse or direct image, then so is each $\Chat_p$.\label{item:chat4}
  \end{enumerate}
\end{lemma}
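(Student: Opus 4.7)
The plan is to construct all required structure \emph{componentwise}. For a $\kappa$-small diagram $D\colon J\to\Chat_r$, I define $\bG=\lim D$ by $\cmpnt\bG\mu := \lim_j \cmpnt{D(j)}\mu$ in $\C_p$, which exists because $\C_p$ is $\kappa$-complete. Since each $\C_\varrho$ preserves $\kappa$-small limits, we have $\C_\varrho(\cmpnt\bG\mu)\cong \lim_j \C_\varrho(\cmpnt{D(j)}\mu)$, so the family $\cmpnt{D(j)}\alpha\colon \cmpnt{D(j)}\nu\to\C_\varrho(\cmpnt{D(j)}\mu)$ assembles into a unique structure map $\cmpnt\bG\alpha\colon \cmpnt\bG\nu\to\C_\varrho(\cmpnt\bG\mu)$. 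The coherence clauses of \cref{defn:chat} are inherited from the corresponding clauses for each $D(j)$ via the uniqueness part of the universal property of limits. The universal property of $\bG$ in $\Chat_r$ then follows from the morphism clauses \ref{item:chat-mor-1}--\ref{item:chat-mor-2} of \cref{defn:chat}: a morphism $\bD\to\bG$ is exactly a $J$-indexed cone over $D$. Preservation by $\reflect\mu$ is by definition, and preservation by $\Chat^\mu$ follows from $\cmpnt{(\Chat^\mu(\bG))}\nu = \cmpnt\bG{\mu\circ\nu}$.

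For \ref{item:chat1}, colimits can also be computed componentwise, but with a twist: the structure map $\cmpnt{(\colim D)}\alpha$ is obtained by composing each $\cmpnt{D(j)}\alpha$ with the canonical map $\C_\varrho(\cmpnt{D(j)}\mu)\to\C_\varrho(\colim_j\cmpnt{D(j)}\mu)$ induced by functoriality of $\C_\varrho$ on the colimit coprojections, and then extending universally out of $\colim_j\cmpnt{D(j)}\nu$. This uses only functoriality of $\C_\varrho$, which is why no preservation hypothesis on colimits is needed. Both $\reflect\mu$ and $\Chat^\mu$ preserve these colimits by the same pointwise reasoning.

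For \ref{item:chat2}--\ref{item:chat4}, I would view $\Chat_r$ as the category of sections of a Grothendieck fibration over $\slice\M//r$ whose fibres are the $\C_p$, and invoke standard preservation results for such section categories. For \ref{item:chat2}, the slice $\Chat_r/\bG$ is itself an oplax limit of the slices $\C_p/\cmpnt\bG\mu$ (with connecting functors obtained by base change of each $\C_\varrho$), so dependent products and a subobject classifier can be assembled componentwise; here the $\kappa$-limit preservation of $\C_\varrho$ is essential, since it is precisely what makes the horizontal coherence squares on exponentials commute. For \ref{item:chat3}, accessibility of each $\C_\mu$ together with local presentability of the fibres makes $\Chat_r$ a $2$-limit of accessible categories and accessible functors, hence accessible, and cocompleteness is provided by \ref{item:chat1}. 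Finally, \ref{item:chat4} combines the previous two, with the extra observation that inverse and direct image functors are accessible and left-exact, which is sufficient for the componentwise construction of the subobject classifier.

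The main obstacle I anticipate lies in \ref{item:chat2} (and by extension \ref{item:chat4}): verifying that the pointwise construction of dependent products and exponentials produces genuine objects of $\Chat_r$ requires a careful check that the horizontal structure maps $\cmpnt\bG\alpha$ cohere, and this is the place where the limit-preservation hypothesis on the $\C_\varrho$ must be applied most delicately. The remaining parts are formal consequences once the oplax-limit presentation of $\Chat_r$ is fully exploited.
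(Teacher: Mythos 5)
Your treatment of the main statement and of items \eqref{item:chat1}, \eqref{item:chat3}, and \eqref{item:chat4} is essentially the paper's: limits and colimits are constructed pointwise (with limit-preservation of the $\C_\varrho$ needed only in the limit case, exactly as you observe), accessible categories and functors are closed under 2-limits so \eqref{item:chat3} follows together with cocompleteness from \eqref{item:chat1}, and \eqref{item:chat4} reduces to \eqref{item:chat2} and \eqref{item:chat3} via the characterization of Grothendieck toposes as locally presentable elementary toposes. Those parts are correct.

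The gap is in item \eqref{item:chat2}. Dependent products, exponentials, and the subobject classifier in an oplax limit are \emph{not} assembled componentwise, and no amount of limit-preservation by the $\C_\varrho$ repairs this. Already in the simplest example --- the arrow category $\Set^{\to}$, the oplax limit of the identity functor on $\Set$ --- the subobject classifier is $3\to 2$ rather than the pointwise $2\to 2$, and the exponential of $(A_0\to A_1)$ and $(B_0\to B_1)$ has first component a pullback correcting $B_0^{A_0}$ against $B_1^{A_1}$, not $B_0^{A_0}$ itself. The structural reason is a variance problem: to make the pointwise exponentials $(B^{\mu})^{(A^{\mu})}$ into an object of $\Chat_r$ you would need coherence maps $(B^{\nu})^{(A^{\nu})}\to \C_\varrho((B^{\mu})^{(A^{\mu})})$, but the structure map $A^{\nu}\to\C_\varrho(A^{\mu})$ induces a map \emph{into} the exponential with exponent $A^{\nu}$ from the one with exponent $\C_\varrho(A^{\mu})$, not out of it, so no canonical such map exists; your remark that limit-preservation ``makes the horizontal coherence squares on exponentials commute'' has no squares to apply to. The paper's proof avoids the issue entirely by identifying the oplax limit with the category of coalgebras for a finitely continuous comonad on a product of copies of the $\C_p$ (Artin gluing, \cite[B3.4.6]{ptj:elephant}) and invoking the closure of locally cartesian closed categories and elementary toposes under products and under coalgebras for lex comonads \cite[A4.2.1]{ptj:elephant}. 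You would need that argument, or an explicit gluing-style formula for the corrected exponentials and subobject classifier, to complete \eqref{item:chat2}; since \eqref{item:chat4} rests on \eqref{item:chat2}, it inherits the same gap, although your reduction of \eqref{item:chat4} to \eqref{item:chat2} and \eqref{item:chat3} is fine.
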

\begin{proof}
  The limits, and colimits in~\cref{item:chat1}, are defined pointwise.
  For~\cref{item:chat2}, an oplax limit is the category of coalgebras for a finitely continuous comonad on a product category (see~\cite{wraith:glueing} or~\cite[B3.4.6]{ptj:elephant}), and the stated properties are closed under products and such coalgebras (e.g.~\cite[A4.2.1]{ptj:elephant}).
  For~\cref{item:chat3}, by~\cite[Theorem 5.1.6]{mp:accessible} accessible categories and functors are closed under limits, and an accessible category is locally presentable if and only if it is cocomplete.
  For~\cref{item:chat4}, we use~\cref{item:chat2} and~\cref{item:chat3}, since Grothendieck topoi are the locally presentable elementary topoi~\cite[C2.2.8]{ptj:elephant}, and left and right adjoints are accessible.
\end{proof}

% \subsection{The right adjoints}
% \label{sec:right-adjoints}

\begin{lemma}\label{thm:smlock-adj}
  For $\varpi:r \to s$, the functor $\reflect{\varpi} : \Chat_s \to \C_r$ has a right adjoint, which we write $\incl\varpi$.
\end{lemma}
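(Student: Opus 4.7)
The plan is to construct $\incl\varpi(X)$ pointwise. For each $\mu:p\to s$, set
\[ \cmpnt{(\incl\varpi(X))}{\mu} \;:=\; \lim_{(\sigma,\alpha)\in(\slice\M//s)(\varpi,\mu)} \C_\sigma(X), \]
the limit in $\C_p$ of the diagram indexed by the hom-category of the lax slice from $\varpi$ to $\mu$. Its objects are pairs $(\sigma:r\to p,\;\alpha:\varpi \To \mu\circ\sigma)$, its morphisms are 2-cells $\gamma:\sigma\To\sigma'$ satisfying $(\mu\triangleleft\gamma)\circ\alpha = \alpha'$, and the diagram sends $(\sigma,\alpha)\mapsto\C_\sigma(X)$ with transitions supplied by the pseudofunctorial action $\C_\gamma:\C_\sigma\To\C_{\sigma'}$. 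The limit exists because $\M$, and hence every such hom-category, is $\kappa$-small while $\C_p$ has $\kappa$-small limits by \cref{assume:limits}.

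Next, I would construct the structural data $\cmpnt{(\incl\varpi(X))}{\beta}$ for $\beta:\mu\To\nu\circ\varrho$. Since $\C_\varrho$ preserves $\kappa$-small limits, its application to $\cmpnt{(\incl\varpi(X))}{\mu}$ is identified (via pseudofunctoriality) with $\lim_{(\sigma,\alpha)} \C_{\varrho\circ\sigma}(X)$, and I supply the required map out of $\cmpnt{(\incl\varpi(X))}{\nu}$ by projecting the $\nu$-indexed limit at the object $(\varrho\circ\sigma,\,(\beta\triangleright\sigma)\circ\alpha)\in(\slice\M//s)(\varpi,\nu)$, for each $(\sigma,\alpha)\in(\slice\M//s)(\varpi,\mu)$. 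Verifying that these data satisfy the object axioms of \cref{defn:chat} is routine but tedious, amounting to the interchange of whiskering with vertical composition plus the pseudofunctoriality coherences of $\C$; functoriality of the whole construction in $X$ is similar, using the limit-functoriality applied componentwise.

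For the adjunction, by the universal property of the defining limits, a morphism $\bth:\bG\to\incl\varpi(X)$ amounts to a family of maps $\bth^{\mu,\sigma,\alpha}:\cmpnt{\bG}{\mu}\to\C_\sigma(X)$ indexed by $\mu$ and $(\sigma,\alpha)\in(\slice\M//s)(\varpi,\mu)$, suitably compatible with index morphisms and with the structural data of both $\bG$ and $\incl\varpi(X)$. I claim this family is uniquely determined by the single map $f:= \bth^{\varpi,1_r,1_\varpi}:\cmpnt{\bG}{\varpi}\to X$: given such $f$, set $\bth^{\mu,\sigma,\alpha} := \C_\sigma(f)\circ\cmpnt{\bG}{\alpha}$, using the structural map $\cmpnt{\bG}{\alpha}:\cmpnt{\bG}{\mu}\to\C_\sigma(\cmpnt{\bG}{\varpi})$ of $\bG$ associated to $\alpha:\varpi\To\mu\circ\sigma$. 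The main obstacle will be checking that this assignment respects both compatibility conditions — functoriality in the index $(\sigma,\alpha)$, and condition~\cref{item:chat-mor-2} of \cref{defn:chat} against the structural data of $\incl\varpi(X)$ I produced in the previous paragraph — and is inverse to the projection $\bth\mapsto\bth^{\varpi,1_r,1_\varpi}$. Each reduces, via axiom~(iv) of $\bG$ and pseudofunctoriality of $\C$, to identities between whiskered 2-cells, while mutual inverseness uses axiom~\cref{item:chat-id} at the distinguished object.
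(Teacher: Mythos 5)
Your proposal is correct and follows essentially the same route as the paper's proof: the paper also defines $\cmpnt{(\incl\varpi\Gamma)}\nu$ as the limit of $\C_\sigma(\Gamma)$ over the comma category of pairs $(\sigma,\beta:\varpi\To\nu\circ\sigma)$ (which is exactly your hom-category of the lax slice), builds the structural maps by reindexing along $(\sigma,\alpha)\mapsto(\varrho\circ\sigma,(\beta\triangleright\sigma)\circ\alpha)$, and establishes the universal arrow by showing every component of a map into $\incl\varpi\Gamma$ is determined by the one at $(1_r,1_\varpi)$ via the formula $\C_\sigma(f)\circ\cmpnt\bG\alpha$. The verifications you defer to axiom~(iv), pseudofunctoriality, and axiom~\cref{item:chat-id} are deferred at the same level of detail in the paper.
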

\begin{proof}
  Given $\Gamma\in \C_r$, we must first define $\cmpnt{(\incl\varpi\Gamma)}\nu \in \C_p$ for any $\nu:p\to s$.
  Let $\commacomp{\varpi}{\nu}$ be the category of pairs $(\sigma:r\to p,\, \beta : \varpi \To \nu\circ\sigma)$.
  % , and whose morphisms $(\sigma,\beta) \to (\sigma',\beta')$ are 2-cells $\gamma : \sigma \To \sigma'$ such that $(\nu\triangleleft\gamma)\circ\beta = \beta'$.
  Any such $(\sigma,\beta)$ induces an object $\C_\sigma(\Gamma) \in \C_p$; %, and any such $\gamma$ induces a morphism of these.
  we define
  \[ \cmpnt{(\incl\varpi\Gamma)}\nu = \lim_{(\sigma,\beta)\in \commacomp{\varpi}{\nu}} \C_\sigma(\Gamma). \]

  Now suppose given also $\varrho:p\to q$ and $\alpha : \mu \To \nu\circ \varrho$.
  Then $\cmpnt{(\incl\varpi\Gamma)}\alpha$ should be a morphism % (using limit-preservation of $\varrho$)
  \[ \cmpnt{(\incl\varpi\Gamma)}\nu = \lim_{(\sigma,\beta)\in \commacomp{\varpi}{\nu}} \C_\sigma(\Gamma)
    \too
    \lim_{(\sigma,\beta)\in \commacomp{\varpi}{\mu}} \C_{\varrho} \C_{\sigma}(\Gamma)
    \xto{\cong} \C_\varrho(\cmpnt{(\incl\varpi\Gamma)}\mu).
  \]
  If $(\sigma,\beta)\in \commacomp{\varpi}{\mu}$ indexes a factor $\C_\varrho\C_\sigma (\Gamma)$ of this codomain, then $(\varrho\circ \sigma, (\alpha \triangleright \sigma)\circ \beta) \in \commacomp{\varpi}{\nu}$, and the factor $\C_{\varrho\circ\sigma}(\Gamma)$ of the domain is isomorphic to $\C_\varrho\C_\sigma (\Gamma)$.
  Thus, this determines a map $\cmpnt{(\incl\varpi\Gamma)}\alpha$ between the limits.
  This defines $\incl\varpi\Gamma \in \Chat_s$.
  Now we observe that
  \[ \cmpnt{(\incl\varpi\Gamma)}{\varpi} = \lim_{(\sigma,\beta) \in \commacomp{\varpi}{\varpi}} \C_\sigma(\Gamma). \]
  Since $(1_r,1_{\varpi})\in \commacomp{\varpi}{\varpi}$, with $\C_{1_r}(\Gamma) \cong \Gamma$, there is a projection $\epsilon_\Gamma : \cmpnt{(\incl\varpi\Gamma)}{\varpi} \to \Gamma$.
  We claim this is a universal arrow from $\reflect\varpi$.
  For $\bD \in \Chat_s$, a map $\theta : \bD \to \incl\varpi\Gamma$ consists of, for any $\nu:p\to r$ and any $(\sigma,\beta)\in \commacomp{\varpi}{\nu}$, a morphism $\cmpnt\bth{\nu,(\sigma,\beta)} : \cmpnt\bD\nu \to \C_\sigma\Gamma$, such that for any $\alpha : \mu \To \nu\circ\varrho$ and $\beta : \varpi \To \mu\circ \sigma$:
  \[
    \begin{tikzcd}[row sep=large, column sep=large]
      \cmpnt\bD\nu \ar[r,"\cmpnt\bD\alpha"] \ar[d,"\C_\varrho(\cmpnt\bth\nu)" description]
      \ar[dd,bend right=60,"\cmpnt\bth{\nu,(\varrho\circ\sigma,(\alpha\triangleright\sigma)\circ\beta)}"'] &
      \C_\varrho(\cmpnt\bD\mu) \ar[d,"\cmpnt\bth\mu" description]
      \ar[dd,bend left=60,"\cmpnt\bth{\mu,(\sigma,\beta)}"]\\
      \cmpnt{(\incl\varpi\Gamma)}\nu \ar[r,"\cmpnt{(\incl\varpi\Gamma)}\alpha"] \ar[d] &
      \C_\varrho(\cmpnt{(\incl\varpi\Gamma)}\mu) \ar[d] \\
      \C_{\varrho\circ\sigma}\Gamma \ar[r,"\cong"'] & \C_\varrho \C_\sigma \Gamma.
    \end{tikzcd}
  \]
  Taking $\nu=\varpi$ and $\sigma=1_r$ with $\beta = 1_\varpi$ yields % map $\cmpnt\bth{\varpi,(1_r,1_\varpi)} : \cmpnt\bD\varpi \to \Gamma$.
  the composite $\cmpnt\bD\varpi \xto{\cmpnt\bth\varpi} \cmpnt{(\incl\varpi\Gamma)}\varpi \xto{\epsilon_\Gamma} \Gamma$.
  Moreover, if in the above condition we take $\mu=\varpi$ with $(\sigma,\beta) = (1_r,1_\varpi)$, then the left-hand vertical composite becomes $\cmpnt\bth{\nu,(\varrho,\alpha)}$, which is fully general; thus all the components of $\theta$ are determined by $\cmpnt\bth{\varpi,(1_r,1_\varpi)}$.

  Now, given $\vartheta : \cmpnt\bD\varpi \to \Gamma$, for any $\nu$ and $(\sigma,\beta)$ we have a composite
  \( \cmpnt\bD\nu \xto{\cmpnt\bD\beta} \C_\sigma(\cmpnt\bD\varpi) \xto{\C_\sigma(\vartheta)} \C_\sigma\Gamma \).
  The above compatibility condition follows from the axioms of \cref{defn:chat}, so we have a map $\bD \to \incl\varpi\Gamma$.
  Its underlying map $\cmpnt\bD\varpi \to \Gamma$ is
  \( \cmpnt\bD\varpi \xto{\cmpnt\bD{1_\varpi}} \C_{1_r}(\cmpnt\bD\varpi) \xto{\C_{1_\varpi}(\vartheta)} \C_{1_\varpi}\Gamma \cong\Gamma \),
  which is equal to $\vartheta$. % up to the coherence isomorphisms.
%  Thus, we have the requisite bijection. % to show the universal property of $\epsilon_\Gamma$.
\end{proof}

When $\varpi = 1_r$, we write $\reflect r = \reflect{1_r}$ and $\incl{r} = \incl{1_r}$.

\begin{lemma}\label{thm:up-ff}
  The functor $\incl r : \C_r \to \Chat_r$ is fully faithful.
\end{lemma}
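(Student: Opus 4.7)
The plan is to invoke the standard criterion that a right adjoint is fully faithful precisely when the counit of the adjunction is an isomorphism. Since \cref{thm:smlock-adj} already establishes $\reflect r \dashv \incl r$ together with an explicit description of the counit $\epsilon_\Gamma : \reflect r \incl r \Gamma \to \Gamma$, it will suffice to check that $\epsilon_\Gamma$ is an isomorphism for every $\Gamma \in \C_r$.

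Unwinding the construction of $\incl\varpi$ at $\varpi = 1_r$, we have
\[ \reflect r \incl r \Gamma \;=\; \cmpnt{(\incl r \Gamma)}{1_r} \;=\; \lim_{(\sigma,\beta) \in \commacomp{1_r}{1_r}} \C_\sigma(\Gamma), \]
where $\commacomp{1_r}{1_r}$ has objects $(\sigma : r \to r,\; \beta : 1_r \To \sigma)$ and morphisms $(\sigma,\beta) \to (\sigma',\beta')$ given by 2-cells $\gamma : \sigma \To \sigma'$ with $\gamma \circ \beta = \beta'$; moreover $\epsilon_\Gamma$ is the projection indexed by $(1_r, 1_{1_r})$, composed with the pseudofunctoriality isomorphism $\C_{1_r}(\Gamma) \cong \Gamma$.

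The main step will be to observe that $(1_r, 1_{1_r})$ is an \emph{initial object} of $\commacomp{1_r}{1_r}$: given any other $(\sigma, \beta)$, a morphism $(1_r, 1_{1_r}) \to (\sigma, \beta)$ is a 2-cell $\gamma : 1_r \To \sigma$ satisfying $\gamma \circ 1_{1_r} = \beta$, which forces $\gamma = \beta$ and hence delivers unique existence. Once this is in hand, the classical fact that a limit over a category with an initial object is canonically isomorphic to the value of the diagram there lets us conclude immediately: $\cmpnt{(\incl r \Gamma)}{1_r} \cong \C_{1_r}(\Gamma) \cong \Gamma$, with $\epsilon_\Gamma$ realizing this isomorphism.

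There is no serious obstacle. The only point requiring care is to unwind the morphism structure of $\commacomp{1_r}{1_r}$ precisely enough to confirm initiality; everything else is either a direct application of \cref{thm:smlock-adj} or a standard categorical fact, so the proof should be quite short.
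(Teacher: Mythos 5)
Your proof is correct and follows essentially the same route as the paper: both identify $(1_r,1_{1_r})$ as an initial object of $\commacomp{1_r}{1_r}$, conclude that the limit defining $\cmpnt{(\incl r\Gamma)}{1_r}$ collapses to $\C_{1_r}(\Gamma)\cong\Gamma$, and deduce that the counit $\epsilon_\Gamma$ is an isomorphism, hence $\incl r$ is fully faithful. Your explicit verification of initiality is a welcome extra detail that the paper leaves implicit.
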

\begin{proof}
  When $\varpi=1_r$, the element $(1_r, 1_{1_r})$ of $\commacomp{1_r}{1_r}$ is initial.
  Thus, the domain of $\epsilon_\Gamma$ is evaluation at that object, which is $\C_{1_r}(\Gamma) \cong \Gamma$.
  So $\epsilon$ is an isomorphism, hence $\incl r$ is fully faithful.
\end{proof}

\begin{lemma}\label{thm:up-mate}
  Let $\mu:p\to r$, $\nu:q\to r$, $\varrho:p\to q$, and $\alpha: \mu \To \nu\circ\varrho$.
  Then for any $\Gamma\in \C_p$ there is a map
  \( \ocmpnt\Gamma\alpha : \incl{\mu}(\Gamma) \to \incl{\nu}(\C_\varrho \Gamma),
  \),
  which varies naturally in $\Gamma$;
  it is the mate of $\cmpnt\bD\alpha : \cmpnt\bD\nu \to \C_\varrho(\cmpnt\bD\mu)$.\qed
\end{lemma}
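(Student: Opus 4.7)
My plan is to construct $\ocmpnt\Gamma\alpha$ by transposition across the adjunction $\reflect\nu \dashv \incl\nu$ from \cref{thm:smlock-adj}. By that adjunction, a morphism $\incl\mu(\Gamma) \to \incl\nu(\C_\varrho\Gamma)$ in $\Chat_r$ is equivalent to a morphism $\reflect\nu(\incl\mu(\Gamma)) \to \C_\varrho\Gamma$ in $\C_q$, and the construction in the proof of \cref{thm:smlock-adj} gives
\[ \reflect\nu(\incl\mu(\Gamma)) \;=\; \cmpnt{(\incl\mu\Gamma)}\nu \;=\; \lim_{(\sigma,\beta)\in \commacomp{\mu}{\nu}} \C_\sigma(\Gamma). \]
The given data $(\varrho,\alpha)$ is by definition an object of $\commacomp{\mu}{\nu}$, indexing a factor $\C_\varrho(\Gamma)$ of this limit. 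I would therefore take the canonical projection onto that factor and define $\ocmpnt\Gamma\alpha$ to be its transpose across $\reflect\nu\dashv\incl\nu$.

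Naturality in $\Gamma$ is then automatic: functoriality of each $\C_\sigma$ makes the whole limit functorial in $\Gamma$, projection to a fixed index is a natural transformation, and adjunction transpose preserves naturality.

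It remains to verify that this $\ocmpnt\Gamma\alpha$ really is the mate of the natural transformation $\cmpnt{(-)}\alpha : \reflect\nu \To \C_\varrho\circ\reflect\mu$ across $\reflect\mu\dashv\incl\mu$ and $\reflect\nu\dashv\incl\nu$. By the standard mate recipe, the mate at $\Gamma$ is the $\reflect\nu\dashv\incl\nu$-transpose of the composite
\[ \reflect\nu(\incl\mu\Gamma) \xto{\cmpnt{(\incl\mu\Gamma)}\alpha} \C_\varrho(\reflect\mu(\incl\mu\Gamma)) \xto{\C_\varrho(\epsilon_\Gamma)} \C_\varrho(\Gamma). \]
From \cref{thm:smlock-adj}, $\epsilon_\Gamma$ is the projection onto the $(1_p,1_\mu)$-indexed factor of $\cmpnt{(\incl\mu\Gamma)}\mu$, while the map $\cmpnt{(\incl\mu\Gamma)}\alpha$ is constructed so that reindexing the $(1_p,1_\mu)\in\commacomp{\mu}{\mu}$-factor of its codomain through $\alpha$ yields exactly the $(\varrho,\alpha)\in\commacomp{\mu}{\nu}$-factor of its domain. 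Hence the displayed composite collapses to the $(\varrho,\alpha)$-projection used to define $\ocmpnt\Gamma\alpha$, and the two maps agree. No serious obstacle arises; the whole argument is a straightforward bookkeeping exercise with the universal properties already established in \cref{thm:smlock-adj}.
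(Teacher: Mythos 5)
Your construction is correct and matches the paper's (unstated, hence the \qed) argument: the family $\cmpnt\bD\alpha$ is a natural transformation $\reflect\nu \To \C_\varrho\circ\reflect\mu$ by axiom \cref{item:chat-mor-2}, and $\ocmpnt{(-)}\alpha$ is simply its mate across $\reflect\mu\dashv\incl\mu$ and $\reflect\nu\dashv\incl\nu$. Your explicit identification of that mate with the transpose of the $(\varrho,\alpha)$-indexed limit projection is a correct unwinding of the same construction, using that $\epsilon_\Gamma$ is the $(1_p,1_\mu)$-projection and that $\cmpnt{(\incl\mu\Gamma)}\alpha$ reindexes $(1_p,1_\mu)$ to $(\varrho,\alpha)$.
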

% \begin{proof}
%   It is the mate of the transformation $\Delta\smlock_\nu \to \C_\varrho(\Delta\smlock_\mu)$ for $\Delta\in \Chat_r$.
%   % , which is natural by definition of the morphisms in $\Chat_r$.
% \end{proof}

\begin{lemma}\label{thm:chat-radj}
  For any $\varpi:r\to s$, the functor $\Chat^\varpi : \Chat_s \to \Chat_r$ has a right adjoint $\Chat_{\varpi} : \Chat_r \to \Chat_s$.
\end{lemma}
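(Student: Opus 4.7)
The plan is to construct $\Chat_\varpi$ explicitly by a limit formula that generalises the construction of $\incl\varpi$ in the proof of \cref{thm:smlock-adj}; indeed, when $\bG\in\Chat_r$ is concentrated on its $1_r$-component, the formula should reduce to $\incl\varpi$ applied to that component.

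For $\bG\in\Chat_r$ and $\nu:p\to s$, I would set
\[
\cmpnt{(\Chat_\varpi \bG)}{\nu} \;=\; \lim_{(q,\sigma,\tau,\beta)\in\mathcal{J}_\nu} \C_\tau(\cmpnt{\bG}{\sigma}),
\]
where $\mathcal{J}_\nu$ is the category of quadruples $(q\in\M,\; \sigma : q\to r,\; \tau : q\to p,\; \beta : \varpi\circ\sigma \To \nu\circ\tau)$, and whose morphisms are 1-cells $\rho : q'\to q$ equipped with 2-cells $\sigma'\To \sigma\circ\rho$ and $\tau'\To \tau\circ\rho$ compatible with $\beta$ and $\beta'$. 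The assignment $(q,\sigma,\tau,\beta)\mapsto \C_\tau(\cmpnt{\bG}{\sigma})$ is made into a functor $\mathcal{J}_\nu\to\C_p$ using the pseudofunctoriality of $\C$ together with the structure maps of $\bG$ provided by \cref{defn:chat}. Since $\M$ is $\kappa$-small, each $\mathcal{J}_\nu$ is $\kappa$-small, so the limit exists in $\C_p$ by \cref{assume:limits}.

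The structure maps $\cmpnt{(\Chat_\varpi\bG)}\alpha$ for $\alpha$ in the lax slice over $s$ come from the evident reindexing functors $\mathcal{J}_\nu \to \mathcal{J}_\mu$ induced by whiskering with $\alpha$, exactly as in \cref{thm:smlock-adj}, and the whole construction is functorial in $\bG$. To verify $\Chat^\varpi \dashv \Chat_\varpi$, I would establish a natural bijection between morphisms $\bD \to \Chat_\varpi\bG$ in $\Chat_s$ and morphisms $g : \Chat^\varpi\bD \to \bG$ in $\Chat_r$, mimicking the closing argument of \cref{thm:smlock-adj}. Unpacked, a morphism $\bD\to\Chat_\varpi\bG$ is, by the universal property of the limits, exactly a family of maps $\cmpnt\bD\nu\to \C_\tau(\cmpnt{\bG}\sigma)$, one per $(\nu,q,\sigma,\tau,\beta)$, natural in the $\mathcal{J}_\nu$-index and compatible with the structure maps of $\bD$. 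Given $g$, its adjunct sends $(q,\sigma,\tau,\beta)\in\mathcal{J}_\nu$ to the composite
\[
\cmpnt\bD{\nu} \xto{\cmpnt\bD{\beta}} \C_\tau(\cmpnt\bD{\varpi\circ\sigma}) \xto{\C_\tau(\cmpnt g{\sigma})} \C_\tau(\cmpnt{\bG}{\sigma}),
\]
where $\cmpnt\bD\beta$ uses the decomposition of the codomain of $\beta$ as $\nu\circ\tau$ in the sense of \cref{defn:chat}. Conversely, a morphism $\bD\to\Chat_\varpi\bG$ restricts to $g$ by projecting onto the distinguished object $(q,\sigma,1_q,1_{\varpi\circ\sigma})\in \mathcal{J}_{\varpi\circ\sigma}$, whose factor $\C_{1_q}(\cmpnt{\bG}\sigma)\cong \cmpnt{\bG}\sigma$ directly yields $\cmpnt g\sigma$. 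These two operations are mutually inverse by the identity axiom $\cmpnt\bD{1_{\varpi\circ\sigma}}=1_{\cmpnt\bD{\varpi\circ\sigma}}$, exactly as in the final paragraph of the proof of \cref{thm:smlock-adj}.

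The main obstacle is the bookkeeping needed to make $\mathcal{J}_\nu$ and its action on the factors $\C_\tau(\cmpnt{\bG}\sigma)$ precise enough that functoriality genuinely holds: this calls on each of items (3)--(5) of \cref{defn:chat} for $\bG$ together with the pseudofunctoriality coherences for $\C$, and one has to check that the projections used in the two directions of the bijection really are natural in the index. Once this is set up, both sides of the hom-bijection visibly unpack to the same data, and the adjunction follows.
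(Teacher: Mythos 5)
Your construction is essentially the paper's, unrolled. The paper defines $\Chat_{\varpi}(\bD)$ as a limit \emph{in $\Chat_s$} of the objects $\incl{\varpi\circ\mu}(\cmpnt\bD\mu)$ glued along cospans indexed by 2-cells $\alpha:\mu\To\nu\circ\varrho$, and verifies the adjunction by transposing the compatibility condition for a map $\Chat^\varpi(\bG)\to\bD$ across the adjunctions $\reflect{\varpi\circ\mu}\dashv\incl{\varpi\circ\mu}$ of \cref{thm:smlock-adj}. Since limits in $\Chat_s$ are computed pointwise (\cref{thm:chat-lim}), expanding the definition of $\incl{\varpi\circ\mu}$ in that two-stage limit yields exactly your formula $\lim\,\C_\tau(\cmpnt\bG{\sigma})$ over quadruples $(q,\sigma,\tau,\beta)$, and your hom-bijection (precompose with $\cmpnt\bD\beta$ in one direction, project at $(q,\sigma,1_q,1_{\varpi\circ\sigma})$ in the other) coincides with the paper's. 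So the two arguments agree in substance; yours trades the paper's iterated limit, which lets it reuse \cref{thm:smlock-adj} wholesale, for a single totalized index at the cost of redoing the bookkeeping by hand.

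One concrete point needs repair: as literally described, $\mathcal{J}_\nu$ does not index a covariant functor. A morphism $\rho:q'\to q$ with 2-cells $\gamma:\sigma'\To\sigma\circ\rho$ and $\delta:\tau'\To\tau\circ\rho$ induces, via \cref{defn:chat} and pseudofunctoriality of $\C$, a \emph{cospan}
\[
\C_\tau(\cmpnt\bG{\sigma}) \too \C_{\tau\circ\rho}(\cmpnt\bG{\sigma'}) \longleftarrow \C_{\tau'}(\cmpnt\bG{\sigma'}),
\]
not a single map between the two factors: $\cmpnt\bG{\gamma}$ and $\C_\delta(\cmpnt\bG{\sigma'})$ point in opposite directions relative to your indexing. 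This is precisely why the paper takes the limit of a diagram of objects \emph{and cospans} rather than over a comma-like category. The fix is routine --- adjoin the apex objects $\C_{\tau\circ\rho}(\cmpnt\bG{\sigma'})$ to the index so that $\mathcal{J}_\nu$ becomes the shape of that objects-and-cospans diagram (which is still $\kappa$-small) --- and with that adjustment the functoriality and naturality checks you defer do go through, completing the argument.
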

\begin{proof}
  Let $\bG \in \Chat_s$ and $\bD \in \Chat_r$.
  By definition, a morphism $\bth : \Chat^\varpi(\bG) \to \bD$ consists of components $\cmpnt\bth\mu : \cmpnt\bG{\varpi\circ\mu} \to \cmpnt\bD\mu$ for all $\mu:p\to r$ such that for any $\alpha : \mu\To \nu\circ\varrho$ the following diagram commutes:
  \begin{equation}
    \begin{tikzcd}
      \cmpnt\bG{\varpi\circ\nu} \ar[r,"\cmpnt\bG{\varpi\triangleleft\alpha}"] \ar[d,"\cmpnt\bth\nu"'] &
      \C_\varrho(\cmpnt\bG{\varpi\circ\mu}) \ar[d,"\C_\varrho(\cmpnt\bth\mu)"] \\
      \cmpnt\bD\nu \ar[r,"\cmpnt\bD\alpha"'] &
      \C_\varrho(\cmpnt\bD\mu)
    \end{tikzcd}\label{eq:chat-radj-condition}
  \end{equation}
  To give $\cmpnt\bth\mu$ is equivalent to give $\overline{\cmpnt\bth\mu}: \bG \to \incl{\varpi\circ\mu}(\cmpnt\bD\mu)$.
  We will define $\Chat_{\varpi}(\bD)\in \Chat_s$ as the limit of a diagram of objects $\incl {\varpi\circ\mu}(\cmpnt\bD\mu)$, so that a map $\bG \to \Chat_{\varpi}(\bD)$ is determined by maps $\overline{\cmpnt\bth\mu}$ satisfying a cone condition that is equivalent to~\eqref{eq:chat-radj-condition}.
  We start by writing down the naturality square for the transformation $\incl{\varpi\triangleleft \alpha}$ of \cref{thm:up-mate} at $\cmpnt\bth\mu$, and composing it with the adjunction unit $\bG \to \incl{\varpi\circ\mu}(\cmpnt\bG{\varpi\circ\mu})$:
  \begin{equation}
    \begin{tikzcd}
      \bG \ar[r] \ar[dr] 
      & \incl{\varpi\circ\mu}(\cmpnt\bG{\varpi\circ\mu}) \ar[rr,"\incl{\varpi\circ\mu}(\cmpnt\bth\mu)"] \ar[d,"\incl{\varpi\triangleleft \alpha}(\cmpnt\bG{\varpi\circ\mu})"] &&
      \incl{\varpi\circ\mu}(\cmpnt\bD\mu)\ar[d,"\incl{\varpi\triangleleft \alpha}(\cmpnt\bD\mu)"] \\
      & \incl{\varpi\circ\nu}(\C_\varrho(\cmpnt\bG{\varpi\circ\mu})) \ar[rr,"\incl{\varpi\circ\nu}(\C_\varrho(\cmpnt\bth\mu))"'] &&
      \incl{\varpi\circ\nu}(\C_\varrho(\cmpnt\bD\mu))
    \end{tikzcd}\label{eq:chat-radj-nat}
  \end{equation}
  We also transpose~\eqref{eq:chat-radj-condition} across $\reflect {\varpi\circ\nu} \dashv \incl{\varpi\circ\nu}$ to obtain an equivalent condition as at left below:
  \begin{equation}
    \begin{tikzcd}
      \bG \ar[r] \ar[d] &
      \incl{\varpi\circ\nu}(\C_\varrho(\cmpnt\bG{\varpi\circ\mu})) \ar[d] \\
      \incl{\varpi\circ\nu}(\cmpnt\bD\nu) \ar[r] &
      \incl{\varpi\circ\nu}(\C_\varrho(\cmpnt\bD\mu))
    \end{tikzcd}%\label{eq:chat-radj-condition-tr}
    \hspace{2cm}
    \begin{tikzcd}
      \bG \ar[r,"\overline{\cmpnt\bth\nu}"] \ar[d,"\overline{\cmpnt\bth\nu}"'] &
      \incl{\varpi\circ\mu}(\cmpnt\bD\mu) \ar[d] \\
      \incl{\varpi\circ\nu}(\cmpnt\bD\nu) \ar[r] &
      \incl{\varpi\circ\nu}(\C_\varrho(\cmpnt\bD\mu))
    \end{tikzcd}%\label{eq:chat-radj-condition-mod}
    \label{eq:chat-radj-condition-new}
  \end{equation}
  The left-bottom composite in~\eqref{eq:chat-radj-nat} is equal to the top-right composite
  at left in~\eqref{eq:chat-radj-condition-new}.
  Thus, we can replace this part of the square at left in~\eqref{eq:chat-radj-condition-new}
  by the top-right composite in~\eqref{eq:chat-radj-nat} to obtain the equivalent condition at right in~\eqref{eq:chat-radj-condition-new}.
  Now we define $\Chat_{\varpi}(\bD)$ to be the limit in $\Chat_s$ %(which exists by \cref{thm:chat-lim})
  of the diagram consisting of the objects $\incl{\varpi\circ\mu}(\cmpnt\bD\mu)$, for all $\mu : p\to r$, and the cospans
  \( \incl{\varpi\circ\nu}(\cmpnt\bD\nu) \to
    \incl{\varpi\circ\nu}(\C_\varrho(\cmpnt\bD\mu)) \leftarrow
    \incl{\varpi\circ\mu}(\cmpnt\bD\mu)
  \)
  for all $\alpha : \mu \To \nu\circ\varrho$.
  Then $\bG \to \Chat_{\varpi}(\bD)$ consists of $\overline{\cmpnt\bth\mu}$ satisfying~\eqref{eq:chat-radj-condition-new}, hence maps $\cmpnt\bth\mu$ satisfying~\eqref{eq:chat-radj-condition}.
\end{proof}

\begin{corollary}\label{thm:chat-psfr}
  We have a 2-functor $\Chat : \ladj{\M}\coop \to \cat$, with $\Chat^{\mu^\dagger} = \Chat_\mu$.
  In particular, considering only the right adjoints, we have a pseudofunctor $\Chat : \M \to \cat$.\qed
\end{corollary}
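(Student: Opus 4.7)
The plan is to combine \cref{thm:chat-locks,thm:chat-radj} via the universal property of the free right-adjoint completion $\ladj\M$: a 2-functor $\ladj\M \to \mathcal{K}$ is determined by a 2-functor $F : \M \to \mathcal{K}$ together with a chosen right adjoint in $\mathcal{K}$ for each $F(\mu)$, with the action on the formally adjoined $\mu^\dagger$ and their mate 2-cells then being forced.

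First, I would transpose the modal context structure $\Chat : \M\coop \to \cat$ from \cref{thm:chat-locks} into a 2-functor $\M \to \cat\coop$. Since $\coop$ reverses both 1-cells and 2-cells, and the reversals of units and counits compensate for each other, an adjunction $L \dashv R$ in $\cat$ is the same data as an adjunction $L \dashv R$ in $\cat\coop$; hence a right adjoint in $\cat\coop$ for $\Chat^\mu$ is precisely a right adjoint in $\cat$, and \cref{thm:chat-radj} supplies $\Chat_\mu$ for this role. The universal property then delivers a 2-functor $\ladj\M \to \cat\coop$, equivalently $\Chat : \ladj\M\coop \to \cat$, and by construction $\Chat^{\mu^\dagger} = \Chat_\mu$ as required.

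For the second assertion, the sub-2-category of $\ladj\M$ spanned by the formally adjoined right adjoints, together with the mate 2-cells $\alpha^\dagger : \nu^\dagger \To \mu^\dagger$ corresponding to $\alpha : \mu \To \nu$ in $\M$, is isomorphic to $\M\coop$. Applying $\coop$ produces an inclusion $\M \hookrightarrow \ladj\M\coop$, and precomposing with $\Chat$ yields the stated pseudofunctor $\M \to \cat$ sending $\mu$ to $\Chat_\mu$.

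The step deserving most care is the invocation of the universal property in this (likely only pseudofunctorial) setting: one must check that the 2-cell data forced on $\ladj\M$ by the universal property---namely the mates of the chosen adjunctions $\Chat^\mu \dashv \Chat_\mu$---is genuinely pseudofunctorial and agrees with what one would write by hand. This is a standard coherence argument flowing from uniqueness of mates, and once it is in place the corollary follows mechanically.
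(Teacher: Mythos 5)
Your proof is correct and matches what the paper leaves implicit behind its \qed: the corollary is exactly the combination of \cref{thm:chat-locks} (the strict 2-functor on $\M\coop$) and \cref{thm:chat-radj} (a chosen right adjoint for each $\Chat^\mu$) via the generators-and-relations universal property of $\ladj{\M}$, with the restriction to $\M$ being only a pseudofunctor because passage to adjoints is pseudofunctorial via mates. The only nitpick is that the daggered generators with their mate 2-cells form the image of a pseudofunctor $\M\coop\to\ladj{\M}$ rather than a sub-2-category strictly isomorphic to $\M\coop$ (since $(\mu\circ\nu)^\dagger$ is only isomorphic to $\nu^\dagger\circ\mu^\dagger$), which is precisely why the final conclusion is a pseudofunctor and not a 2-functor.
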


We call this pseudofunctor $\Chat$ the \textbf{co-dextrification} of $\C$.

% Note also that since $\Delta\smlock_\varpi = \Delta \lock_\varpi \smlock_{1_r}$, we also have $\Gamma\upvp \cong \Gamma\up\lock_{\varpi_*}$.

\begin{lemma}\label{thm:smlock1-psnat}
  The functors $\reflect{r} : \Chat_r \to \C_r$ are a pseudonatural transformation of pseudofunctors $\M\to\cat$.
\end{lemma}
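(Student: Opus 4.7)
The plan is to exhibit, for each $\varpi : r \to s$ in $\M$, an invertible 2-cell $\phi_\varpi : \C_\varpi \circ \reflect{r} \xRightarrow{\cong} \reflect{s} \circ \Chat_\varpi$ filling the pseudonaturality square, and then to verify the coherence axioms. I would construct $\phi_\varpi$ by unwinding limits. By the proof of \cref{thm:chat-radj}, $\Chat_\varpi(\bD)$ is a limit in $\Chat_s$ of objects $\incl{\varpi\circ\mu}(\cmpnt\bD\mu)$ indexed by $\mu : p \to r$, with cospans for each 2-cell $\alpha : \mu \To \nu\circ\varrho$. Limits in $\Chat_s$ are computed pointwise and preserved by $\reflect{s}$ (\cref{thm:chat-lim}), so $\reflect{s}(\Chat_\varpi(\bD))$ is the limit in $\C_s$ of the $1_s$-components. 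By \cref{thm:smlock-adj} the comma category $\commacomp{\varpi\circ\mu}{1_s}$ has an initial object $(\varpi\circ\mu,\,1)$, so each $1_s$-component collapses to $\C_{\varpi\circ\mu}(\cmpnt\bD\mu)$; in particular at $\mu = 1_r$ it is $\C_\varpi(\reflect{r}(\bD))$, and projection onto this factor gives a natural candidate $\phi_\varpi^{-1} : \reflect{s}(\Chat_\varpi(\bD)) \to \C_\varpi(\reflect{r}(\bD))$.

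To show $\phi_\varpi^{-1}$ is invertible I would check the limit's universal property directly. A cone from $X \in \C_s$ consists of maps $f_\mu : X \to \C_{\varpi\circ\mu}(\cmpnt\bD\mu)$ satisfying the cospan condition $\C_{\varpi\circ\nu}(\cmpnt\bD\alpha)\circ f_\nu = \C_{\varpi\triangleleft\alpha}(\cmpnt\bD\mu)\circ f_\mu$ for every $\alpha : \mu \To \nu\circ\varrho$. Given $f_{1_r} : X \to \C_\varpi(\cmpnt\bD{1_r})$, I define $f_\mu := \C_\varpi(\cmpnt\bD{1_\mu})\circ f_{1_r}$ using the structure map of $\bD$ at the 2-cell $1_\mu : \mu \To 1_r\circ\mu$; the cospan condition then follows from the composition axioms of \cref{defn:chat} applied to $\alpha$ together with $1_\nu : \nu \To 1_r\circ\nu$ on one side and $1_\mu : \mu \To 1_r\circ\mu$ on the other, combined with pseudofunctoriality of $\C$ (both sides reduce to $\C_\varpi$ applied to $\cmpnt\bD\alpha$ with the codomain re-decomposed as $1_r\circ\nu\circ\varrho$). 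Uniqueness of the extension is forced by specialising the cospan condition to $\nu=1_r$, $\varrho=\mu$, $\alpha=1_\mu$, which yields $f_\mu = \C_\varpi(\cmpnt\bD{1_\mu})\circ f_{1_r}$.

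Naturality of $\phi_\varpi$ in $\bD$ is immediate from functoriality of the constituents. The remaining pseudonaturality coherences reduce to: (a) $\phi_{1_r}$ agrees with the identity modulo the pseudofunctor unitors of $\Chat$ (from \cref{thm:chat-psfr}) and $\C$; and (b) for composable $\varpi : r \to s$ and $\varpi' : s \to t$, the horizontal composite $\phi_{\varpi'}\cdot\phi_\varpi$ matches $\phi_{\varpi'\circ\varpi}$ under the compositor $\Chat_{\varpi'\circ\varpi} \cong \Chat_{\varpi'}\circ\Chat_\varpi$. Both are routine unwindings of the explicit limit description. The main obstacle is the middle paragraph: the reduction of a cone indexed by all $\mu : p\to r$ (with its 2-cell structure) to the single datum $f_{1_r}$ leans on the full force of the oplax-limit axioms of \cref{defn:chat}; once that reduction is in hand, the remaining coherence checks are straightforward diagram chases.
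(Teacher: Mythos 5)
Your proposal follows essentially the same route as the paper's proof: write $\Chat_\varpi(\bD)$ as the limit of the $\incl{\varpi\circ\mu}(\cmpnt\bD\mu)$, push it through the limit-preserving $\reflect s$, collapse each factor via the initial object $(\varpi\circ\mu,1)$ of $\commacomp{(\varpi\circ\mu)}{1_s}$, and then collapse the remaining diagram onto the $\mu=1_r$ vertex using the cospans induced by $1_\mu:\mu\To 1_r\circ\mu$ whose right legs are identities. The only difference is that you spell out the cone-extension and uniqueness argument for that last collapse (which the paper states tersely), and you explicitly flag the coherence axioms for pseudonaturality, which the paper likewise leaves as routine.
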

\begin{proof}
  Let $\varpi:r\to s$ and $\bG\in \Chat_r$; we must show that $\cmpnt{\Chat_{\varpi}(\bG)}{1_s} \cong \C_\varpi(\cmpnt\bG{1_s})$.
  Since $\reflect{s} = \reflect{1_s}$ preserves $\kappa$-small limits, $\cmpnt{\Chat_{\varpi}(\bG)}{1_s}$ is the limit of the diagram consisting of the objects $\cmpnt{(\incl{\varpi\circ\mu}(\cmpnt\bG\mu))}{1_s}$, for all $\mu:p\to r$, and the analogous cospans.
  And by definition of $\incl{\varpi\circ\mu}$, each of these objects is the limit
  \[ \cmpnt{(\incl{\varpi\circ\mu}(\cmpnt\bG\mu))}{1_s} = 
    \lim_{(\sigma,\beta) \in \commacomp{(\varpi\circ\mu)}{1_s}} \C_\sigma(\cmpnt\bG\mu).
  \]
  But $\commacomp{(\varpi\circ\mu)}{1_s}$ has an initial object $(\varpi\circ\mu, 1_{\varpi\circ\mu})$, so this limit is isomorphic to $\C_{\varpi\circ\mu}(\cmpnt\bG\mu)$.
  A similar argument applies to the apices of the cospans, so $\cmpnt{\Chat_{\varpi}(\bG)}{1_s}$ is the limit of the diagram consisting of the objects $\C_{\varpi\circ\mu}(\cmpnt\bG\mu)$, for all $\mu:p\to r$, and the cospans
  \( \C_{\varpi\circ\nu}(\cmpnt\bG\nu)
    \to
    \C_{\varpi\circ\nu\circ\varrho}(\cmpnt\bG\mu)
    \leftarrow
    \C_{\varpi\circ\mu}(\cmpnt\bG\mu)
  \)
  for all $\alpha : \mu \To \nu\circ\varrho$.
  However, there is a canonical such object where $\mu=1_r$, and for any other $\mu$ the 2-cell $1_\mu : \mu \To 1_r \circ \mu$ determines a canonical cospan
  \( \C_\varpi(\cmpnt\bG{1_s}) \to \C_{\varpi\circ 1_r\circ \mu}(\cmpnt\bG\mu) \xleftarrow{=} \C_{\varpi\circ\mu}(\cmpnt\bG\mu) \)
  in which the right-hand leg is an identity.
  Thus, the limit of this diagram is isomorphic to $\C_\varpi(\cmpnt\bG{1_s})$.
  % We leave it to the reader to check the coherence axioms of this isomorphism.
\end{proof}

\begin{lemma}\label{thm:up-lax}
  The functors $\incl{r} : \C_r \to \Chat_r$ are lax natural,
  by doctrinal adjunction~\cite{kelly:doc-adjn}.\qed
\end{lemma}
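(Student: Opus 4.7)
The plan is to invoke Kelly's doctrinal adjunction theorem directly. By \cref{thm:smlock1-psnat}, the components $\reflect{r}$ form a pseudonatural transformation $\reflect{} : \Chat \To \C$ of pseudofunctors $\M \to \cat$. By \cref{thm:smlock-adj}, each $\reflect{r}$ has a right adjoint $\incl{r}$. Doctrinal adjunction then automatically yields a lax natural transformation $\incl{} : \C \To \Chat$ whose components are these right adjoints, with lax naturality 2-cells constructed as mates of the pseudonaturality isomorphisms.

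Explicitly, for $\varpi : r \to s$ in $\M$, the pseudonaturality isomorphism $\phi_\varpi : \reflect{s}\circ \Chat^\varpi \xrightarrow{\cong} \C_\varpi\circ \reflect{r}$ from \cref{thm:smlock1-psnat} has, under the adjunctions $\reflect{r}\dashv \incl{r}$ and $\reflect{s}\dashv \incl{s}$, a mate
\[ \incl{s}\circ \C_\varpi \xrightarrow{\incl{s}\, \C_\varpi\, \eta^r} \incl{s}\circ \C_\varpi\circ\reflect{r}\circ \incl{r} \xrightarrow{\incl{s}\, \phi_\varpi^{-1}\, \incl{r}} \incl{s}\circ\reflect{s}\circ\Chat^\varpi\circ \incl{r} \xrightarrow{\epsilon^s\, \Chat^\varpi\, \incl{r}} \Chat^\varpi\circ \incl{r}. \]
These supply the required lax naturality 2-cells $\incl{s}\circ \C_\varpi \To \Chat^\varpi\circ \incl{r}$, and the axioms (respect for identities, for 2-cells, and coherence under horizontal composition of morphisms) follow automatically from the functoriality of the mate correspondence, which is exactly the content of Kelly's theorem.

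The only check beyond invoking the theorem is that the variance conventions align: since $\Chat$ is covariant on $\M$ (by \cref{thm:chat-psfr}, restricted to right adjoints) and $\reflect{}$ points $\Chat \To \C$, the mates correctly point $\C \To \Chat$, as demanded of $\incl{}$. There is no substantive obstacle here; this is a standard coherence result applied in exactly its designed setting, which is why the author presents it as a one-line reference.
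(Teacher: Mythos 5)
Your overall plan is exactly the paper's: the paper offers no proof beyond the citation, and your first paragraph correctly identifies the two inputs (pseudonaturality of $\reflect{}$ from \cref{thm:smlock1-psnat} and the componentwise adjunctions $\reflect{r}\dashv\incl{r}$ from \cref{thm:smlock-adj}) and the mechanism (mates / doctrinal adjunction).

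However, the explicit 2-cell you write down in the second paragraph is wrong, in two ways. First, a typographical but meaningful one: the pseudofunctor $\M\to\cat$ on which $\reflect{}$ is pseudonatural uses the \emph{right} adjoints $\Chat_\varpi:\Chat_r\to\Chat_s$ (\cref{thm:chat-psfr}), not $\Chat^\varpi:\Chat_s\to\Chat_r$; the composite $\reflect{s}\circ\Chat^\varpi$ is not even well-typed. Second, and more substantively, the mate of $\phi_\varpi:\reflect{s}\circ\Chat_\varpi\To\C_\varpi\circ\reflect{r}$ under the two adjunctions points the other way: it is
\[
\Chat_\varpi\circ\incl{r}\xrightarrow{\ \eta^s\ }\incl{s}\circ\reflect{s}\circ\Chat_\varpi\circ\incl{r}\xrightarrow{\ \incl{s}\,\phi_\varpi\,\incl{r}\ }\incl{s}\circ\C_\varpi\circ\reflect{r}\circ\incl{r}\xrightarrow{\ \incl{s}\,\C_\varpi\,\epsilon^r\ }\incl{s}\circ\C_\varpi,
\]
using the unit $\eta^s:1\To\incl{s}\circ\reflect{s}$ and the counit $\epsilon^r:\reflect{r}\circ\incl{r}\To 1$. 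This is precisely the lax direction in the paper's convention (compare the appendix, where colax means $F_q\circ\C_\mu\To\D_\mu\circ F_p$, so lax is the reverse). Your displayed composite instead targets the colax direction $\incl{s}\circ\C_\varpi\To\Chat_\varpi\circ\incl{r}$, and its final arrow requires a 2-cell $\incl{s}\circ\reflect{s}\To 1$, i.e.\ an inverse to the unit of $\reflect{s}\dashv\incl{s}$, which does not exist in general: only $\incl{s}$ is fully faithful (\cref{thm:up-ff}), which makes the counit invertible, not the unit. The first arrow is likewise mislabelled ($1\To\reflect{r}\circ\incl{r}$ is $(\epsilon^r)^{-1}$, not a unit). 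So as written your 2-cell does not exist; replacing it with the genuine mate above repairs the argument, after which the lax-naturality axioms do follow from functoriality of the mate correspondence, as you say.
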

% \begin{proof}
%   % , the right adjoints of any pseudo (or even colax) natural transformation form a lax natural one.
%   % Explicitly, 
%   the lax comparison map $\lock_{\mu_*} \circ (-)\up \to (-)\up \circ \C_\mu$ is the mate of the pseudofunctoriality isomorphism $\smlock_1 \circ \lock_{\mu_*} \cong \C_\mu \circ \smlock_1$.
% \end{proof}

\endgroup

\section{MATT in the co-dextrification}
\label{sec:cofree-modal-natural}

\def\incl#1{\mathsf{R}_{#1}}

We now show that for suitable $\C$, the co-dextrification $\Chat$ models MATT over $\LS$ (recall \cref{assume:ls}).
In fact, we use only its abstract properties; this makes our arguments cleaner and more general.

\subsection{Adjoint modal pre-models}
\label{sec:apm}

%The following definition abstracts all the relevant structure possessed by the co-dextrification as we constructed it in \cref{sec:cofree}, together with the necessary additional assumptions on $\C$ for the coherence theorems.
Recall that a \textbf{natural pseudo-model}~\cite[Appendix A]{shulman:univinj} is a strict natural transformation $\tau : \Tm \to \Ty$ between groupoid-valued pseudofunctors $\Tm,\Ty : \D\op \to \gpd$ that has discrete fibers and is representable.

\begin{definition}\label{def:apm}
  Let $\L$ be a 2-category with a class $\cS$ of morphisms.
  An \textbf{adjoint modal pre-model} is:
  \begin{enumerate}
  \item A modal context structure $\Chat : \ladj{\L}\coop \to \cat$, such that each $\Chat_p$ is locally cartesian closed.
    As before, we write its action on morphisms as $\Chat^\mu$, and we write $\Chat_\mu = \Chat^{\mu^\dagger}$.
    %, forming a pseudofunctor $\Chat : \L \to \cat$.
    % By restricting to the \emph{right} adjoints, we obtain a \emph{pseudofunctor} $\Chat : \L \to \cat$.
  \item A pseudofunctor $\C : \LS \to \cat$, with action on morphisms $\C_\mu$.
    % This is
    % %essentially\footnote{Strictly speaking, the universal property of $\LS$ refers to strict 2-functors, not pseudofunctors, but any pseudofunctor with codomain $\cat$ is equivalent to a strict one.}
    % equivalent to a pseudofunctor $\C: \L\to\cat$ such that $\C_\mu$ has a right adjoint if $\mu\in\cS$.
    %, whose action on morphisms in $\L$ we write as $\Cunlock\mu : \C_p\to \C_q$, with right adjoint $\Cradj\mu : \C_q \to\C_p$ whenever $\mu\in\cS$.
    %, such that each category $\C_p$ is locally cartesian closed.
  \item A pseudonatural transformation $\reflect{} : \Chat \to \C$ between pseudofunctors $\L\to\cat$.
    To be covariant on $\L$, we take the right adjoints in $\Chat$ but the left adjoints in $\C$;
    thus $\Cunlock\mu(\reflect p(\Gamma)) \cong \reflect q(\Chat_\mu(\Gamma))$.
  \item Each functor $\reflect p:\Chat_{p} \to \C_{p}$ preserves finite limits and has a fully faithful right adjoint $\incl p$.
  \item Each category $\C_p$ is a natural pseudo-model $(\C_p,\tau_p)$.
  \end{enumerate}
\end{definition}

\begin{example}
  If $\C : \L\to\cat$ is a pseudofunctor such that each $\C_p$ is locally cartesian closed with $\kappa$-small limits, each functor $\C_\mu$ preserves $\kappa$-small limits, and $\C_\mu$ has a right adjoint if $\mu\in\cS$, then the co-dextrification $\Chat$ extends it to an adjoint modal pre-model.
\end{example}

\begin{remark}\label{rmk:id-lw}
  % Although this definition is designed to capture the co-dextrifica\-tion, there are other examples.
  % In particular, 
  If each $\reflect p$ is an identity, then \cref{def:apm} is just a modal context structure $\Chat : \ladj{\L}\coop \to \cat$ consisting of locally cartesian closed natural pseudo-models such that $\Chat_{\mu}$ has a right adjoint when $\mu\in\cS$.
  In this case, the results we will prove in this section specialize to a more ordinary version of~\cite{lw:localuniv} for the modal case, when the lock functors already exist but we need to strictify the type formers.
\end{remark}

\begin{arxiv}
  Local cartesian closure will be used for $\Pi$-types, of course, but is also used in the method of~\cite{lw:localuniv} to manipulate local universes.
  To that end, we observe that $\C$ is closed under the pushforwards in $\Chat$.
\end{arxiv}

\begin{lemma}\label{thm:lei}
  In an adjoint modal pre-model, if $A\xto{f} B \xto{g}C$ are morphisms such that $f$ is a pullback of a map in the image of $\incl p$, then the pushforward $g_*(f)$ is also a pullback of a map in the image of $\incl p$.
  \begin{arxiv}
    That is, $\incl p : \C_p \to \Chat_p$ identifies $\C_p$ with a \textbf{local exponential ideal} in $\Chat_p$
  \end{arxiv}
\end{lemma}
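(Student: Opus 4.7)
The plan is to construct the pushforward $g_*(f)$ explicitly as a pullback of an $\incl p$-map, via computations in the LCCC $\Chat_p$ mediated by the adjunction $\reflect p \dashv \incl p$.

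First I would reduce to the case where $f$ is the pullback of $\incl p \reflect p f : \incl p\reflect p A \to \incl p\reflect p B$ along $\eta_B$. Given $f = b^*(\incl p h)$ for some $b : B \to \incl p Y'$, the factorization $b = \incl p \bar b \circ \eta_B$ (valid since $\incl p$ is fully faithful, with $\bar b : \reflect p B \to Y'$ the transpose of $b$) together with the fact that $\incl p$ preserves limits exhibits $A \cong B\times_{\incl p\reflect p B} \incl p(\reflect p A)$, where $\reflect p A \cong X' \times_{Y'} \reflect p B$ by left exactness of $\reflect p$.

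Next, using local cartesian closure of $\Chat_p$, form the pushforward $N := (\incl p \reflect p g)_*(\incl p \reflect p A)$ in $\Chat_p/\incl p\reflect p C$. Chasing through the pushforward--pullback adjunction, the slice adjunction induced by $\reflect p \dashv \incl p$ over $\incl p \reflect p B$, and left exactness of $\reflect p$ yields, for any $X \to \incl p\reflect p C$,
\[
  \mathrm{Hom}_{\Chat_p/\incl p\reflect p C}(X, N) \;\cong\; \mathrm{Hom}_{\C_p/\reflect p B}\bigl(\reflect p X \times_{\reflect p C} \reflect p B,\ \reflect p A\bigr).
\]
The right-hand side depends on $X$ only through $\reflect p X$, so the presheaf $\mathrm{Hom}(-, N)$ factors through $\reflect p$. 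Applied to the unit $\eta_N : N \to \incl p\reflect p N$, a Yoneda argument forces $\eta_N$ to be an isomorphism, so $N \cong \incl p M$ with $M := \reflect p N$.

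Finally I would verify the pullback formula $g_*(A) \cong C \times_{\incl p\reflect p C} \incl p M$. For $Z \in \Chat_p/C$, unwinding $\mathrm{Hom}_{\Chat_p/C}(Z, g_*(A))$ through the pushforward adjunction, the pullback description of $A$, the adjunction $\reflect p \dashv \incl p$, and lex-ness of $\reflect p$ gives exactly the same Hom set as displayed above (with $X$ the composite $Z \to C \xto{\eta_C} \incl p \reflect p C$); this in turn equals $\mathrm{Hom}_{\Chat_p/C}(Z, C\times_{\incl p\reflect p C} \incl p M)$ by the slice adjunction, and Yoneda concludes. Thus $g_*(f)$ is the pullback of the $\incl p$-map $\incl p(M \to \reflect p C)$ along $\eta_C$. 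The main obstacle is the careful bookkeeping in these Hom calculations, especially tracking the interplay between the slice adjunctions and the universal properties of pushforwards and pullbacks; once the factorization of $\mathrm{Hom}(-,N)$ through $\reflect p$ is in hand, all the rest is formal.
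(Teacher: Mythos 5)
Your proposal is correct, and it rests on the same two pillars as the paper's proof: that $\reflect p$ is left exact with fully faithful right adjoint $\incl p$, so that the pullbacks of $\incl p$-maps are the local objects of a left-exact reflective subcategory of each slice, and that $\mathrm{Hom}(-,g_*(f))$ consequently factors through that reflection. The route is nonetheless noticeably different in texture. The paper argues abstractly: the reflection $\reflect{/C}$ of $\Chat_p/C$ commutes with $g^*$ by left exactness, so every map $h \to g_*(f)$ factors through $\reflect{/C}(h)$, and taking $h = g_*(f)$ shows $g_*(f)$ is fixed by the reflection --- it never exhibits the $\incl p$-map of which $g_*(f)$ is a pullback. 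You instead construct that map explicitly, as $\incl p$ applied to (the reflection of) the pushforward $N = (\incl p\reflect p g)_*(\incl p \reflect p A)$, and verify the pullback formula $g_*(f) \cong C\times_{\incl p\reflect p C}\incl p M$ by a Hom computation. Your version is longer and requires more bookkeeping (the reduction of $f$ to the pullback of its own unit square, the identification of $\reflect p(Z\times_C B)$ with $\reflect p Z\times_{\reflect p C}\reflect p B$, the locality of $N$), but it buys an explicit formula for the local universe, which is in the spirit of what \cref{thm:lw} ultimately needs. One cosmetic slip: the factorization $b = \incl p\bar b\circ\eta_B$ is just the universal property of the unit and does not need full faithfulness of $\incl p$; full faithfulness is what you actually use when you invoke $\reflect p\incl p\cong 1$ in the lex-ness computations and in concluding that $\eta_N$ is invertible.
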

\begin{proof}
  The pullbacks of maps in the image of $\incl p$ are a left-exact-reflective subcategory of $\Chat_p/C$; the reflection $\reflect{/C}$ applies $\reflect p$ and pulls back to $C$.
  For any $h : D\to C$, morphisms $h \to g_*(f)$ in $\Chat_p/C$ are equivalent to morphisms $g^*(h) \to f$ in $\Chat_p/B$.
  By assumption on $f$, any such morphism factors through $\reflect{/B}(g^*(h))$, which is $g^*(\reflect{/C}(h))$ by left-exactness of $\reflect p$.
  Thus, it also corresponds to a map $\reflect{/C}(h) \to g_*(f)$.
  Taking $h = g_*(f)$ we conclude that $g_*(f) \cong \reflect{/C}(g_*(f))$ and hence lies in the subcategory.
\end{proof}

\subsection{The left adjoint splitting}
\label{sec:lasplit}

% Since we assume only that $\C_p$ is a natural \emph{pseudo-}model, as happens naturally in examples, we will need to apply a coherence theorem.
The \textbf{left adjoint splitting}~\cite{lw:localuniv} of a natural pseudo-model $(\D,\tau)$ is $\taub : \Tmb \to \Tyb$ where:
\begin{itemize}
\item An element $A\in\Tyb(\Gamma)$ consists of an object $\baseof A\in \D$, a type $\topof A \in \Ty(\baseof A)$, and a morphism $\nameof{A} : \Gamma \to \baseof A$.
  We call $\baseof A$ the \emph{local universe}.
\item An element $(A,a)\in\Tmb(\Gamma)$ consists of $\baseof A\in \D$, a type $\topof A \in \Ty(\baseof A)$, and $a : \Gamma \to \baseof A\ce \topof A$.
\item The map $\taub$ sends $a$ to $\nameof A = \p_A \circ a$.
\end{itemize}
Since $\taub$ is the pullback of $\tau$ along the map $\Tyb \to \Ty$ sending $A$ to $\topof A[\nameof{A}]$, it is a natural model.
%In particular, this shows that $\taub$ is a natural model.

Given an adjoint modal pre-model, we define $\tauhatb_p = (\reflect{p})^*\taub_p$.
Thus, an element $A\in\Tyhatb_p(\Gamma)$ consists of an object $\baseof A\in \C_p$, a type $\topof A \in \Ty_p(\baseof A)$, and a morphism $\nameof{A} : \reflect{p}\Gamma \to \baseof A$, or equivalently $\nameof A : \Gamma \to \incl p\baseof A$.

\begin{lemma}\label{thm:lw-modal}
  If $(\Chat,\C)$ is an adjoint modal pre-model over $(\L,\cS)$, then $(\Chat,\tauhatb)$ is an adjoint modal natural model over $\LS$.
\end{lemma}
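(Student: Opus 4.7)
The goal is to verify that $(\Chat^\varphi)^*\tauhatb_p$ is representable in $\Pr\Chat_q$ for every tangible $\varphi : p\to q$ of $\LS$. By \cref{eg:ls}, every such $\varphi$ is isomorphic to some $\mu\circ\nu^\dagger$ with $\mu:s\to q$ in $\L$ and $\nu:s\to p$ in $\cS$; since representability is invariant under natural isomorphism, I may fix $\varphi = \mu\nu^\dagger$, so that $\Chat^\varphi = \Chat^{\nu^\dagger}\Chat^\mu = \Chat_\nu\Chat^\mu$.

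The strategy is the local-universe method of~\cite{lw:localuniv}, as foreshadowed by~\cref{rmk:id-lw}. An element $A\in\Tyhatb_p(\Chat^\varphi\Gamma)$ unpacks to $(\baseof A, \topof A, \nameof A)$ with $\baseof A\in\C_p$, $\topof A\in\Ty_p(\baseof A)$, and $\nameof A :\reflect p\Chat^\varphi\Gamma\to\baseof A$. The comprehension pullback of $\taub_p$ over $\topof A$, namely $\yon(\baseof A\ce\topof A)\to\yon(\baseof A)$ in $\Pr\C_p$, is transported along the limit-preserving $(\reflect p)^*:\Pr\C_p\to\Pr\Chat_p$ using the identification $(\reflect p)^*\yon(X)\cong\yon(\incl p X)$ (from $\reflect p\dashv\incl p$), and then along $(\Chat^\varphi)^*:\Pr\Chat_p\to\Pr\Chat_q$. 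The result is a pullback square in $\Pr\Chat_q$ whose lower edge factors the classifying map $\yon(\Gamma)\to(\Chat^\varphi)^*\Tyhatb_p$ corresponding to $A$, with factorization through $(\Chat^\varphi)^*\yon(\incl p \baseof A)$ given by $\nameof A$.

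The crux is to recognize that $(\Chat^\varphi)^*\yon(\incl p X)$ is representable for every $X\in\C_p$. Chaining the adjunctions $\reflect p\dashv\incl p$, the pseudonaturality isomorphism $\reflect p\Chat_\nu\cong\C_\nu\reflect s$ of \cref{thm:smlock1-psnat}, the adjunction $\C_\nu\dashv\C_{\nu^\dagger}$ (available because $\nu\in\cS$), $\reflect s\dashv\incl s$, and $\Chat^\mu\dashv\Chat_\mu$, yields
\[
\Chat_p(\Chat_\nu\Chat^\mu\Gamma',\incl p X) \;\cong\; \Chat_q(\Gamma',\,\Chat_\mu\,\incl s\,\C_{\nu^\dagger}X),
\]
so $(\Chat^\varphi)^*\yon(\incl p X)\cong\yon(\Chat_\mu\,\incl s\,\C_{\nu^\dagger}X)$.

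Hence we may form the pullback
\[
\Gamma\mce{\varphi}A \;:=\; \Gamma\,\times_{\Chat_\mu\incl s\C_{\nu^\dagger}\baseof A}\,\Chat_\mu\incl s\C_{\nu^\dagger}(\baseof A\ce\topof A)
\]
inside $\Chat_q$, using finite limits from local cartesian closure; Yoneda preserves this, and pasting with the transported comprehension square above gives the required representing pullback of $(\Chat^\varphi)^*\tauhatb_p$ along $\yon(\Gamma)\xto{A}(\Chat^\varphi)^*\Tyhatb_p$. Naturality in $\Gamma$ is automatic from the universal property of pullbacks, and the strict type-theoretic substitution axioms hold because $\tauhatb_p$ is, by construction, a strict pullback of the natural pseudo-model $\tau_p$. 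The only delicate point in the argument is the adjoint chase in the third paragraph, where one must match the pseudonaturality coherence of $\reflect{}$ with the adjunction isomorphisms; everything else is routine pullback-pasting.
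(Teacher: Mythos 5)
Your proof is correct and follows essentially the same route as the paper's: both reduce to the tangible morphisms $\mu\circ\nu^\dagger$, use $\tauhatb_p = (\reflect p)^*\taub_p$ together with the pseudonaturality isomorphism $\reflect p\circ\Chat_\nu\cong\C_\nu\circ\reflect{}$ to exhibit the restricting functor as one with right adjoint $\Chat_\mu\circ\incl{}\circ\C_{\nu^\dagger}$, and conclude representability is preserved. Your explicit pullback for $\Gamma\mce{\varphi}A$ is exactly the paper's comprehension square~\eqref{eq:mnce}, so the extra detail you supply is a correct unpacking rather than a different argument.
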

\begin{proof}
  The tangible morphisms in $\LS$ are $\mu\circ\nu^\dagger$, for $\mu:q\to r$ in $\L$ and $\nu : q\to p$ in $\cS$.
  Thus, we must show that in this case $(\Chat_{\nu} \circ \Chat^\mu)^* \tauhatb_p = (\reflect{p} \circ \Chat_{\nu} \circ \Chat^\mu)^* \taub_p$ is representable.
  But by pseudonaturality of $\reflect{}$, we have
  \(\reflect{p} \circ \Chat_{\nu} \circ \Chat^\mu \;\cong\; \C_\nu \circ \reflect{q} \circ \Chat^\mu
  \),
  and this has a right adjoint $\Chat_{\mu} \circ \incl q \circ \C_{\nu^\dagger}$.
  Finally, restriction along any functor with a right adjoint preserves representability.
\end{proof}

Explicitly, the comprehension $\Gamma \mnce{\mu}{\nu} A$ is the pullback
\begin{equation}
  \label{eq:mnce}
  \begin{tikzcd}%[column sep=tiny]
    \Gamma \mnce{\mu}{\nu} A \ar[d,"\phat_A"'] \ar[rr] \drpullback &&
    % (\Gamma\lock_\mu\smlock_{1_q} \ce \topof A[\nameof A]\Cradj\nu) \up \lock_{\mu_*} \ar[d]\\
    % (\Gamma\lock_\mu\smlock_{1_q}\Cunlock\nu \ce \topof A[\nameof A])\flag_\nu \up \lock_{\mu_*} \ar[d] \\
    \Chat_{\mu} \incl q  \Cradj\nu (\baseof A \ce \topof A) \ar[d] \\
    \Gamma \ar[r] &
    % \Chatunlock\mu \Chatlock\mu \Gamma \ar[r] &
    % \Chatunlock\mu \incl q \reflect q \Chatlock\mu \Gamma \ar[r] &
    \Chat_{\mu}\incl q \Cradj\nu \Cunlock\nu \reflect q \Chat^\mu(\Gamma) \ar[r,"\nameof A"'] &
    \Chat_{\mu}\incl q  \Cradj\nu \baseof A.
  \end{tikzcd}
\end{equation}
\begin{arxiv}
  Here the bottom row consists of three adjunction units followed by $\nameof A$.
\end{arxiv}

\begin{arxiv}
  It is shown by example in~\cite{lw:localuniv} that ``weakly stable'' categorical structure on a natural pseudo-model can usually be enhanced to strictly stable structure on its left adjoint splitting.
  We observe that this remains true for the mode-local type theories in an adjoint modal pre-model.
\end{arxiv}

\begin{theoremschema}\label{thm:lw}
  If $(\Chat,\C)$ is an adjoint modal pre-model, then for any of the type constructors considered in~\cite{lw:localuniv}, if $(\C,\tau)$ has weakly stable structure, then $(\Chat,\tauhatb)$ has strictly stable structure.
\end{theoremschema}
\begin{proof}
  Since $\reflect{p}$ preserves finite limits, any weakly stable or pseudo-stable structure on $\tau_p$ lifts to $(\reflect{p})^*\tau_p$.
  Therefore, by~\cite{lw:localuniv}, $((\reflect{p})^*\tau_p)^!$ has strictly stable structure.
  If we identify $\C_p$ with the image of $\incl p$, then $\Tyhatb\subseteq ((\reflect{p})^* \Ty_p)^!$ consists of the types whose local universes lie in $\C_p$.
  By \cref{thm:lei}, $\C_p$ is closed under all the local universe manipulations of~\cite{lw:localuniv}; hence $\tauhatb$ is closed under the strictly stable structure.
\end{proof}

For the modal type formers, the ``weakly stable'' structure exists on $\C$ alone; thus we name its structure.

\begin{definition}
  A \textbf{modal pre-model} over an adjoint mode theory $\M$ is a pseudofunctor $\C : \M \to \cat$ such that each $\C_p$ is a natural pseudo-model.
\end{definition}

\subsection[Pi-structure]{$\Pi$-structure}
\label{sec:pi-structure}

\begin{arxiv}
  To define the input for the coherence theorem for $\Pi$-structure, it is convenient to use the following notion.
\end{arxiv}

\begin{definition}
  A morphism $\delta:\Gamma\to \Delta$ in a natural pseudo-model is \textbf{type-exponentiable} if for any $B\in \Ty(\Gamma)$, the pushforward of $\Gamma\ce B$ along $\delta$ is isomorphic to a type projection $\Delta \ce \Pi(f,B)\to \Delta$.
  \begin{arxiv}
    In other words, there is some $\Pi(f,B) \in \Ty(\Delta)$ and a distributivity pullback
    \[
      \begin{tikzcd}
        \bullet \ar[r] \ar[d] \drpullback & \Gamma\ce B \ar[r,"\p_B"] & \Gamma \ar[d,"\delta"] \\
        \Delta\ce \Pi(f,B) \ar[rr,"\p_{\Pi(f,B)}"'] & {} & \Delta.
      \end{tikzcd}
    \]
  \end{arxiv}
\end{definition}

\begin{definition}\label{defn:pre-pi}
  A modal pre-model $\C$ has \textbf{pre-$\Pi$-structure} if for any \emph{sharp} $\mu:p\to q$ in $\M$ and any $\Gamma\in \C_p$ and $A\in \Ty_p(\Gamma)$, any pullback of
  $ \C_\mu\p_A  :   \C_\mu(\Gamma\ce A) \to  \C_\mu \Gamma$
  is type-exponentiable.
\end{definition}

\begin{arxiv}
  We will also need the following general fact.
\end{arxiv}

\begin{lemma}\label{thm:pf}
  Let $\mathsf{L}: \A \rightleftarrows \B : \mathsf{R}$ be an adjunction where $\mathsf{L}$ preserves pullbacks.
  Let $f : A \to B$ be in $\A$, $g:C\to \mathsf{L}A$ in $\B$, and suppose that the pushforward $(\mathsf{L}f)_* g : (\mathsf{L}f)_*C \to \mathsf{L}B$ of $g$ along $\mathsf{L}f$ exists in $\B$.
  Then the pullback of $\mathsf{R}((\mathsf{L}f)_* C)$ to $B$ is a pushforward along $f$ of the pullback of $\mathsf{R}g$ to $B$.
\end{lemma}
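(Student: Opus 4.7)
The plan is to verify directly that the pullback $h := \eta_B^* \mathsf{R}((\mathsf{L}f)_* g)$ of $\mathsf{R}((\mathsf{L}f)_* g) : \mathsf{R}((\mathsf{L}f)_*C) \to \mathsf{R}\mathsf{L}B$ along the unit $\eta_B : B \to \mathsf{R}\mathsf{L}B$ satisfies the universal property of the pushforward along $f$ of $k := \eta_A^* (\mathsf{R}g)$. For any $m : Z \to B$ in $\A$, I will exhibit a natural bijection
\[ \Hom_{\A/B}(m,\, h) \;\cong\; \Hom_{\A/A}(f^*m,\, k), \]
after which naturality in $m$ and the Yoneda lemma force $h \cong f_*(k)$ over $B$.

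The bijection is assembled as a chain of universal-property transpositions, proceeding in six steps:
(i) by the pullback square defining $h$, a map $m \to h$ over $B$ is the same as a map $Z \to \mathsf{R}((\mathsf{L}f)_*C)$ over $\mathsf{R}\mathsf{L}B$ via $\eta_B \circ m$;
(ii) by $\mathsf{L} \dashv \mathsf{R}$, such a map transposes to a map $\mathsf{L}Z \to (\mathsf{L}f)_*C$ over $\mathsf{L}B$ via $\mathsf{L}m$;
(iii) by the universal property of $(\mathsf{L}f)_*$, this corresponds to a map from the pullback $(\mathsf{L}f)^*(\mathsf{L}Z)$ to $C$ over $\mathsf{L}A$;
(iv) since $\mathsf{L}$ preserves pullbacks, $(\mathsf{L}f)^*(\mathsf{L}Z) \cong \mathsf{L}(f^*Z)$ with its projection to $\mathsf{L}A$ identified with $\mathsf{L}(f^*m)$, so the data becomes a map $\mathsf{L}(f^*Z) \to C$ over $\mathsf{L}A$;
(v) transposing again via $\mathsf{L} \dashv \mathsf{R}$ gives a map $f^*Z \to \mathsf{R}C$ over $\mathsf{R}\mathsf{L}A$ via $\eta_A \circ f^*m$;
(vi) finally, the pullback defining $k$ identifies this with $\Hom_{\A/A}(f^*m,\, k)$.

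Each step is an application of a single universal property, and naturality in $m$ is preserved throughout. The one point that requires real input from the hypothesis is step (iv): without $\mathsf{L}$ preserving pullbacks, one could not identify the pullback $(\mathsf{L}f)^*(\mathsf{L}Z)$ in $\B$ with $\mathsf{L}$ of a pullback in $\A$, and the chain would break. The main bookkeeping burden lies in keeping track of the structure maps to $B$, $\mathsf{R}\mathsf{L}B$, $\mathsf{L}B$, $\mathsf{L}A$, $\mathsf{R}\mathsf{L}A$, $A$ through each transposition; this is where one uses the triangle identities to verify that, for example, the transpose of $\eta_B \circ m$ is $\mathsf{L}m$ and conversely that $f^*m$ transposes to $\eta_A \circ f^*m$. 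No other subtlety arises.
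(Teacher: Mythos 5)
Your proof is correct and is essentially the paper's own argument: the paper runs the same chain of universal properties (pullback along the unit, transposition across $\mathsf{L}\dashv\mathsf{R}$, preservation of pullbacks by $\mathsf{L}$, and the universal property of $(\mathsf{L}f)_*$) as a diagram chase with distributivity pullbacks in the sense of Weber, which is just the diagrammatic form of your hom-set bijection $\Hom_{\A/B}(m,h)\cong\Hom_{\A/A}(f^*m,k)$. The only cosmetic difference is that you conclude via representability/Yoneda where the paper verifies terminality among ``pullbacks around $(f,\eta^*(\mathsf{R}g))$'' directly.
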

\begin{proof}
  This is a fairly straightforward diagram chase.%
\begin{arxiv}
  The desired conclusion can be rephrased as follows.
  Suppose we have a distributivity pullback in $\B$:
  \[\begin{tikzcd}
      P \ar[d]\ar[r] \drpullback & C \ar[r,"g"] & \mathsf{L}A \ar[d,"\mathsf{L}f"] \\
      (\mathsf{L}f)_*C \ar[rr] & {} & \mathsf{L}B
    \end{tikzcd}\]
  Then if we apply $\mathsf{R}$ and take pullbacks as shown in the following cube:
  % https://tikzcd.yichuanshen.de/#N4Igdg9gJgpgziAXAbVABwnAlgFyxMJZAVgBoBGAXVJADcBDAGwFcYkQAlAGQEEQBfUuky58hFGQDM1Ok1btuAIQFCQGbHgJEALKQBMMhizaIQywcI1idpAAyG5JkHwtqRm8ckkUHxhQGEVS1EtFHJSaRojeVMOAAo4rgAzAEoAfQAqOP8UlKC3K1DkcKoox3YABXz1EM89O18YkAAdZpgcegA9LI4c6vdrFFsGsr9TVvaurIq81xqPImGDUaaJju64+MTUzOzc2ZkYKABzeCJQJIAnCABbJDIQHAgkclcr26QAThon+7fru6IAAcP2eiE+-w+iAAbKC-qp3oCQY8wdDIYDYSikNp0Uhkb9EAB2XFEuGIHEIgHYsnDWRjTjHfKIpCEsmSEneLGIWmMegAIxgjAqA1CIEYMCSOBAKyca3oTKpiE5BL0JO+XNVlKh9S54TpqzaHQVUNpBL10Sc3CSAko-CAA
  \[\begin{tikzcd}[column sep={1.9cm,between origins}]
      \eta^*(\mathsf{R}P) \arrow[dd] \arrow[rd] \arrow[rr] % \drpullback[drrr] \drpullback[ddr]
      & & \eta^*(\mathsf{R}C) \arrow[rd] \arrow[rr,"\eta^*(\mathsf{R}g)"] %\drpullback[drrr]
      & & A \arrow[rd, "\eta"] \arrow[dd,"f",near end] & \\
      & \mathsf{R}P \arrow[rr] %  \drpullback
      & {} & \mathsf{R}C \arrow[rr, "\mathsf{R}g",near start,crossing over] & & \mathsf{R}\mathsf{L}A \arrow[dd, "\mathsf{R}\mathsf{L}f"] \\
      \eta^*(\mathsf{R}((\mathsf{L}f)_*(C)))
      \arrow[rd] \arrow[rrrr] % \drpullback[drr]
      & {} & {} & & B \arrow[rd, "\eta"] & \\
      & \mathsf{R}((\mathsf{L}f)_*(C)) \arrow[rrrr] \arrow[from=uu,crossing over] & {} & & & \mathsf{R}\mathsf{L}B
    \end{tikzcd}\]
  (where every face except the left- and right-hand ones is a pullback), the claim is that the back face is a distributivity pullback in $\A$.
  To show this, suppose given a pullback around $(f,\eta^*(\mathsf{R}g))$:
  \[\begin{tikzcd}[column sep=large]
      X \ar[r] \ar[d] \drpullback & \eta^*(\mathsf{R}C) \ar[r,"\eta^*(\mathsf{R}g)"] & A \ar[d,"f"] \\
      Y \ar[rr] & {} & B.
    \end{tikzcd}\]
  Since the top and bottom faces of the above cube are pullbacks, as is the right-hand half of the top face, it will suffice to show that there are unique dashed arrows completing the following cube:
  \[\begin{tikzcd}[column sep={1.9cm,between origins}]
      X \arrow[dd] \arrow[rd,dashed] \ar[rrrr] \arrow[rrrd] % \drpullback[drrr] \drpullback[ddr]
      & &  %\drpullback[drrr]
      & & A \arrow[rd, "\eta"] \arrow[dd,"f",near end] & \\
      & \mathsf{R}P \arrow[rr] %  \drpullback
      & {} & \mathsf{R}C \arrow[rr, "\mathsf{R}g",near start,crossing over] & & \mathsf{R}\mathsf{L}A \arrow[dd, "\mathsf{R}\mathsf{L}f"] \\
      Y \arrow[rd,dashed] \arrow[rrrr] % \drpullback[drr]
      & {} & {} & & B \arrow[rd, "\eta"] & \\
      & \mathsf{R}((\mathsf{L}f)_*(C)) \arrow[rrrr] \arrow[from=uu,crossing over] & {} & & & \mathsf{R}\mathsf{L}B.
    \end{tikzcd}\]
  Transposing across the adjunction $\mathsf{L}\dashv \mathsf{R}$, this becomes
  \[\begin{tikzcd}[column sep={1.9cm,between origins}]
      \mathsf{L}X \arrow[dd] \arrow[rd,dashed] \ar[rrrr] \arrow[rrrd] % \drpullback[drrr] \drpullback[ddr]
      & &  %\drpullback[drrr]
      & & \mathsf{L}A \arrow[rd,equals] \arrow[dd,"\mathsf{L}f",near end] & \\
      & P \arrow[rr] %  \drpullback
      & {} & C \arrow[rr, "g",near start,crossing over] & & \mathsf{L}A \arrow[dd, "\mathsf{L}f"] \\
      \mathsf{L}Y \arrow[rd,dashed] \arrow[rrrr] % \drpullback[drr]
      & {} & {} & & \mathsf{L}B \arrow[rd,equals] & \\
      & (\mathsf{L}f)_*(C) \arrow[rrrr] \arrow[from=uu,crossing over] & {} & & & \mathsf{L}B.
    \end{tikzcd}\]
  But since $\mathsf{L}$ preserves pullbacks, the back face of this is a pullback around $(\mathsf{L}f,g)$, so by assumption the unique dashed arrows exist.
\end{arxiv}
\end{proof}

\begin{theorem}\label{thm:chat-pi}
  If $(\Chat,\C)$ is an adjoint modal pre-model over $(\L,\cS)$ such that $\C$ has pre-$\Pi$-structure over $\LS$, then $(\Chat,\tauhatb)$ has $\Pi$-structure over $\LS$.
\end{theorem}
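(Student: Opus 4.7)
My plan is to prove \cref{thm:chat-pi} as a modal adaptation of \cref{thm:lw}: use pre-$\Pi$-structure in $\C_r$ to supply the key pushforwards, transfer them to $\Chat_r$ via \cref{thm:pf}, and combine \cref{thm:lei} with the local-universe technology of~\cite{lw:localuniv} to obtain strict stability.

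The preliminary observation driving the whole argument is that applying the finitely continuous functor $\reflect r$ to the defining pullback~\eqref{eq:mnce} of $\Gamma\mnce{\mu}{\nu}A$ yields another pullback in $\C_r$; using the pseudonaturality iso $\reflect r\circ\Chat_\mu\cong\C_\mu\circ\reflect q$ together with $\reflect q\circ\incl q\cong\id$ (from \cref{thm:up-ff}), its right-hand vertical map simplifies to $\C_\mu\Cradj\nu\p_{\topof A}=\C_{\mu\circ\nu^\dagger}\p_{\topof A}$. Thus $\reflect r\p_A$ is exhibited as a pullback of $\C_{\mu\circ\nu^\dagger}\p_{\topof A}$ in $\C_r$, which is type-exponentiable there by pre-$\Pi$-structure (since $\mu\circ\nu^\dagger$ is sharp).

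With this in hand I construct $\Pi(A,B)$ as follows. Extract local-universe data $(\baseof A,\topof A,\nameof A)$ and $(\baseof B,\topof B,\nameof B)$, transpose $\nameof B$ across $\reflect r\dashv\incl r$ to $\nameof B^\flat:\reflect r(\Gamma\mnce{\mu}{\nu}A)\to\baseof B$, and pull $\topof B$ back along it. Applying pre-$\Pi$-structure to the type-exponentiable map $\reflect r\p_A$ with this type yields a distributivity pullback over $\reflect r\Gamma$ in $\C_r$, which \cref{thm:pf} (with $\mathsf{L}=\reflect r$, $\mathsf{R}=\incl r$, and $f=\p_A$) converts into a distributivity pullback along $\p_A$ in $\Chat_r$; its source is identified with $\Gamma\mnce{\mu}{\nu}A\ce B$ by unwinding the triangle identities of $\reflect r\dashv\incl r$ together with the definition of $\tauhatb$-comprehensions in~\eqref{eq:mnce}. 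By \cref{thm:lei} the resulting pushforward lies in the image of $\incl r$, so defines a type $\Pi(A,B)\in\Tyhatb_r(\Gamma)$ with the required distributivity pullback.

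The main obstacle will be strict naturality of this construction in $\Gamma$: applied naively, the local universe I produced depends on $\reflect r\Gamma$, defeating strict substitution stability. The remedy --- mirroring the manipulations underlying \cref{thm:lw} --- is to apply pre-$\Pi$-structure at the universal level instead: to a pullback of $\C_{\mu\circ\nu^\dagger}\p_{\topof A}$ along a map into $\C_\mu\Cradj\nu\baseof A$ depending only on $\baseof B$, with a type built only from $(\baseof B,\topof B)$. This produces $(\baseof{\Pi(A,B)},\topof{\Pi(A,B)})$ depending only on $(\baseof A,\topof A,\baseof B,\topof B)$, with the $\Gamma$-specific pushforward recovered as a further pullback; pullback-stability of pre-$\Pi$-distributivity pullbacks combined with \cref{thm:lei} then delivers strict naturality. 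Verifying that this universal choice correctly recovers the pushforward in $\Chat_r$ on pullback to $\Gamma$ --- essentially a naturality calculation with the units of $\reflect r\dashv\incl r$ and of the modal adjunctions --- is the most delicate piece of the argument.
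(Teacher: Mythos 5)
Your proposal is correct and follows essentially the same route as the paper: the same key observation that applying $\reflect r$ to the comprehension pullback~\eqref{eq:mnce} exhibits $\reflect r(\phat_A)$ as a pullback of $\Cunlock\mu\Cradj\nu(\p_{\topof A})$ (hence type-exponentiable by pre-$\Pi$-structure), the same use of \cref{thm:pf} to transfer the pushforward back to $\Chat_r$, and the same remedy for strict stability via a universal local universe $\baseof{\Pi(A,B)}$ built only from $(\baseof A,\topof A,\baseof B,\topof B)$ with the $\Gamma$-specific pushforward recovered by pullback and Beck--Chevalley. The only cosmetic difference is that you route the ``lies in $\Tyhatb$'' step through \cref{thm:lei}, where the paper gets this directly from the construction of the local universe.
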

\begin{proof}
  Suppose we have $\mu:q\to r$ in $\L$ and $\nu : q \to p$ in $\cS$, and also $\Gamma\in \Chat_r$ and $A\in \Tyhatb_p(\Chat_\nu \Chat^\mu \Gamma) = \Ty_p(\reflect p\Chat_\nu \Chat^\mu \Gamma)$ with $B\in \Tyhatb_r(\Gamma\mnce{\mu}{\nu} A) = \Ty_r(\reflect r (\Gamma\mnce{\mu}{\nu} A))$.
  Applying $\reflect r$ to the defining pullback~\eqref{eq:mnce} of $\Gamma\mnce{\mu}{\nu} A$, and using pseudonaturality and the fact that $\reflect q \incl q \cong 1$, we have a pullback
  \begin{equation}
    \label{eq:chat-pi-1}
    \begin{tikzcd}
      \reflect r(\Gamma\mnce{\mu}{\nu} A) \ar[r] \ar[d,"\reflect r (\phat_A)"'] \drpullback &
      \Cunlock\mu\Cradj\nu(\baseof A \ce \topof A) \ar[d,"\Cunlock\mu\Cradj\nu(\p_{\topof A})"] \\
      \reflect r(\Gamma) \ar[r] &
      \Cunlock\mu\Cradj\nu(\baseof A).
    \end{tikzcd}
  \end{equation}
  Thus, \cref{defn:pre-pi} says $\reflect r (\phat_A)$ is type-exponentiable, hence the pushforward of $B$ along it is a type projection; it remains to construct a local universe making it strictly stable.
  Let $\baseof{\Pi(A,B)}$ be the universal object with maps
  $\pi_A : \baseof{\Pi(A,B)} \to \C_\omega(\baseof A)$ and $\pi_B : \pi_A^*(\C_\omega(\baseof A\ce \topof A)) \to \baseof B$.
  % \begin{align*}
  %   \pi_A &: \baseof{\Pi(A,B)} \to \C_\omega(\baseof A)\\
  %   \pi_B &: \pi_A^*(\C_\omega(\baseof A\ce \topof A)) \to \baseof B.
  % \end{align*}
  \begin{arxiv}%
    (As in~\cite{lw:localuniv}, this can be constructed using the locally cartesian closed structure of $\C_q$.)
  \end{arxiv}%
  By \cref{defn:pre-pi}, $\pi_A^*(\C_\omega(\p_{\topof A}))$ is type-exponentiable, so the pushforward of $\topof B [\pi_B] \in \Ty_q(\pi_A^*(\C_\omega(\baseof A\ce \topof A)))$ along it is represented by a type $\topof {\Pi(A,B)} \in \Ty_q(\baseof{\Pi(A,B)})$.%
  \begin{arxiv}
    That is, we have a distributivity pullback
    \[
      \begin{tikzcd}[column sep=small]
        \bullet \ar[r] \ar[d] \drpullback &
        \pi_A^*(\C_\omega(\baseof A\ce \topof A)) \ce\topof B [\pi_B] \ar[r] &
        \pi_A^*(\C_\omega(\baseof A\ce \topof A)) \ar[d] \\
        \baseof{\Pi(A,B)} \ce \topof{\Pi(A,B)} \ar[rr] & {} & \baseof{\Pi(A,B)}.
      \end{tikzcd}
    \]
  \end{arxiv}%
  Now the bottom map in~\eqref{eq:chat-pi-1} and $\nameof B : \reflect q(\Gamma\mce{\omega} A) \to \baseof B$ induce a map $\nameof{\Pi(A,B)} : \reflect q \Gamma \to \baseof{\Pi(A,B)}$.
  Together, these data define $\Pi(A,B) \in \Tyhatb_p(\Gamma)$, such that $\reflect p \Gamma \ce \topof{\Pi(A,B)}[\nameof{\Pi(A,B)}]$ is a pushforward of $\reflect q (\Gamma\mce{\omega}A) \ce \topof B[\nameof B]$ along $\reflect q (\Gamma\mce{\omega}A) \to \reflect q\Gamma$.%
  \begin{arxiv}
    Moreover, the pullback-stability of pushforwards (i.e.\ the Beck-Chevalley condition for dependent products) yields a distributivity pullback
    \[
      \begin{tikzcd}
        \bullet \ar[r] \ar[d] \drpullback &
        \reflect q (\Gamma\mce{\omega}A) \ce \topof B[\nameof B] \ar[r] &
        \reflect q (\Gamma\mce{\omega}A) \ar[d] \\
        \reflect q\Gamma \ce \topof{\Pi(A,B)}[\nameof{\Pi(A,B)}] \ar[rr] & {} & \reflect q\Gamma.
      \end{tikzcd}
    \]
  \end{arxiv}%
  The comprehension $\Gamma \ce \Pi(A,B)$ in $\Chat_q$ is defined by applying $\incl q$ to this and pulling back along the unit $\Gamma \to \incl q \reflect q \Gamma$.
  Thus, \cref{thm:pf} implies%
  \begin{arxiv}
    we have a distributivity pullback
  \[
    \begin{tikzcd}
      \bullet \ar[r] \ar[d] \drpullback &
      \Gamma\mce{\omega}A \ce B \ar[r] &
      \Gamma\mce{\omega}A \ar[d] \\
      \Gamma \ce \Pi(A,B) \ar[rr] & {} & \Gamma,
    \end{tikzcd}
  \]
  giving
  \end{arxiv}%
  the desired universal property of $\Pi(A,B)$.
\end{proof}

\subsection{Positive modalities}
\label{sec:pos-mod}

%As before, it is convenient to formulate the input using an auxiliary definition.

\begin{definition}\label{def:anodyne}
  In a natural pseudo-model, a map $f:\Gamma\to\Delta$ is \textbf{anodyne} if for any $B\in \Ty(\Delta)$ and any $g:\Gamma \to \Delta\ce B$ lifting $f$, there exists a diagonal filler:
  \[
    \begin{tikzcd}
      \Gamma \ar[d,"f"'] \ar[r,"g"] & \Delta\ce B \ar[d,"\p_B"] \\
      \Delta \ar[r,equals] \ar[ur,dashed] & \Delta
    \end{tikzcd}
  \]
  A map is \textbf{stably anodyne} if any pullback of it is anodyne.
\end{definition}

\begin{arxiv}
  In homotopical terms, the anodyne maps are those with the left lifting property against all type projections.
\end{arxiv}

\begin{definition}\label{def:pre-pos}
  A modal pre-model $\C$ has \textbf{positive pre-modalities} if for any \emph{sharp} $\mu:p\to q$ and $\Gamma\in \C_p$ with $A\in \Ty_p(\Gamma)$, there exists $\preF\mu A \in \Ty_q(\Cunlock\mu\Gamma)$ and a map
$i^\mu_{\Gamma,A} : \Cunlock\mu(\Gamma\ce A) \to \Cunlock\mu\Gamma \ce (\preF{\mu}{A})$ over $\Cunlock\mu\Gamma$.
  % \[
  %   \begin{tikzcd}[column sep=small]
  %     \Cunlock\mu(\Gamma\ce A) \ar[dr] \ar[rr,dashed,"i^\mu_{\Gamma,A}"] & & \Cunlock\mu\Gamma \ce (\preF{\mu}{A}) \ar[dl] \\
  %     & \Cunlock\mu\Gamma
  %   \end{tikzcd}
  % \]
  such that for any \emph{transparent} $\varrho:q\to r$, the map $\Cunlock\varrho(i^\mu_{\Delta,A})$ is stably anodyne.
\end{definition}

% Again, we need a general lemma.

\begin{lemma}\label{thm:smlock-anodyne}
  In an adjoint modal pre-model,
  let $\theta : \Gamma \to \Delta$ be a map in $\Chat_p$.
  If $\reflect p\theta$ is anodyne in $\C_p$, then $\theta$ is anodyne in $\Chat_p$.
\end{lemma}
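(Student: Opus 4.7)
The plan is to reduce the lifting problem in $\Chat_p$ to one in $\C_p$ by applying $\reflect p$, then transpose a filler back across $\reflect p \dashv \incl p$. Fix a lifting problem of $\theta$ against some type projection $\p_B : \Delta \ce B \to \Delta$, where $B \in \Tyhatb_p(\Delta)$ has local universe $(\baseof B, \topof B, \nameof B)$; by the specialization of \eqref{eq:mnce} to $\mu=\nu=1_p$, the comprehension $\Delta \ce B$ is the pullback of $\incl p(\p_{\topof B})$ along the transpose $\Delta \to \incl p \baseof B$ of $\nameof B$.

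First I would apply $\reflect p$ to this defining pullback. Since $\reflect p$ preserves finite limits and $\incl p$ is fully faithful (so the counit $\reflect p \incl p \To 1$ is an isomorphism), the resulting square exhibits $\reflect p(\Delta \ce B) \cong \reflect p\Delta \ce \topof B[\nameof B]$, with $\reflect p(\p_B)$ playing the role of the type projection in $\C_p$. Applying $\reflect p$ to the lifting problem $(\theta, g)$ thus produces a lifting problem against an honest type projection in $\C_p$, which by hypothesis admits a filler $s' : \reflect p \Delta \to \reflect p(\Delta \ce B)$.

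Next, I would compose $s'$ with the projection $\reflect p(\Delta \ce B) \to \baseof B \ce \topof B$ coming from the pullback above, and transpose the result across $\reflect p \dashv \incl p$ to obtain a map $s^\sharp : \Delta \to \incl p(\baseof B \ce \topof B)$. The equation $\reflect p(\p_B) \circ s' = 1_{\reflect p \Delta}$ translates, via naturality of the adjunction and the formula for the pullback, into the statement that $\incl p(\p_{\topof B}) \circ s^\sharp$ equals the transpose of $\nameof B$. Together with $1_\Delta : \Delta \to \Delta$, the pair $(s^\sharp, 1_\Delta)$ induces the desired diagonal filler $s : \Delta \to \Delta \ce B$ by the pullback universal property, with $\p_B \circ s = 1_\Delta$ immediate.

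The step I expect to be fussiest is verifying $s \circ \theta = g$: two maps into the pullback $\Delta \ce B$ agree iff their two projections do, and while the $\Delta$-component is trivially $\theta$ on both sides, matching the component into $\incl p(\baseof B \ce \topof B)$ requires a careful diagram chase. Specifically, that component of $s \circ \theta$ is $s^\sharp \circ \theta$, whose transpose under $\reflect p \dashv \incl p$ unfolds to the composite of $\reflect p g$ with the first projection, using the filling equation $s' \circ \reflect p \theta = \reflect p g$ and the counit isomorphism $\reflect p \incl p \cong 1$; transposing back then identifies this component with that of $g$. Everything else is formal.
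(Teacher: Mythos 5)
Your proposal is correct and follows essentially the same route as the paper: both reduce the lifting problem to the outer rectangle of the defining pullback of $\Delta\ce B$ and then transpose across $\reflect p \dashv \incl p$ (using left-exactness of $\reflect p$ and $\reflect p\incl p\cong 1$) to land on a lifting problem in $\C_p$ solved by anodyneness of $\reflect p\theta$. The only cosmetic difference is that you apply $\reflect p$ to the square and use the identity-footed type projection $\p_{\topof B[\nameof B]}\cong\reflect p(\p_B)$, whereas the paper transposes the rectangle directly to a square over $\nameof B$; these are interchangeable by the pullback universal property, and your verification of $s\circ\theta=g$ is exactly the adjunction bookkeeping the paper leaves implicit.
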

\begin{proof}
  Suppose given $B\in \Tyhatb_p(\Delta) = \Tyb_p(\reflect p\Delta)$, and a commutative square as at left below. % in which we want to find a diagonal filler.
  \[
    \begin{tikzcd}
      \Gamma \ar[d,"\theta"'] \ar[r] &
      \Delta\ce B \ar[d,"\p_B"] \ar[r] \drpullback &
      \incl p(\baseof B\ce \topof B) \ar[d,"\incl p \p_{\topof B}"]\\
      \Delta \ar[r,equals] \ar[ur,dashed] &
      \Delta \ar[r,"\nameof B"'] &
      \incl p\baseof B
    \end{tikzcd}
    \hspace{2cm}
    \begin{tikzcd}
      \reflect p\Gamma \ar[d,"\reflect p\theta"'] \ar[r] &
      \baseof B\ce \topof B \ar[d,"\p_{\topof B}"]\\
      \reflect p\Delta \ar[r,"\nameof B"'] \ar[ur,dashed] &
      \baseof B
    \end{tikzcd}
  \]
  % By the universal property of pullback,
  It suffices to find a filler for the outer rectangle at left above; and by adjunction, this is equivalent to finding a filler in the square at right above.
  But such a filler exists precisely because $\reflect p\theta$ is anodyne.
\end{proof}

\begin{theorem}\label{thm:chat-pos}
  If $(\Chat,\C)$ is an adjoint modal pre-model over $(\L,\cS)$ such that $\C$ has positive pre-modalities over $\LS$, then $(\Chat,\tauhatb)$ has positive modalities over $\LS$.
\end{theorem}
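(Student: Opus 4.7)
The plan is to transport the pre-modality data from $\C$ to $\Chat$ via the left-adjoint-splitting machinery of \cref{thm:lw-modal}, with \cref{thm:smlock-anodyne} lifting the anodyne condition of \cref{def:pre-pos} from $\C_r$ to $\Chat_r$.

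Fix a sharp $\mu\circ\nu^\dagger : p\to q$ with $\mu : s\to q$ in $\L$ and $\nu : s\to p$ in $\cS$, a $\Gamma \in \Chat_q$, and $A \in \Tyhatb_p(\Chat_\nu\Chat^\mu\Gamma)$ with local-universe data $(\baseof A, \topof A, \nameof A)$. For formation, I would define $\F{\mu\circ\nu^\dagger}A \in \Tyhatb_q(\Gamma)$ by taking its local universe to be $(\C_\mu\Cradj\nu\baseof A, \preF{\mu\circ\nu^\dagger}\topof A)$ (using \cref{def:pre-pos}), with name $\nameof{\F{\mu\circ\nu^\dagger}A} : \reflect q \Gamma \to \C_\mu\Cradj\nu\baseof A$ the composite of $\reflect q$ applied to the unit $\Gamma \to \Chat_\mu\Chat^\mu\Gamma$ (giving $\reflect q\Gamma\to \C_\mu \reflect s \Chat^\mu\Gamma$ by pseudonaturality of $\reflect{}$) with $\C_\mu$ applied to the adjoint transpose of $\nameof A$ through $\C_\nu \dashv \Cradj\nu$ combined with $\reflect p \Chat_\nu\cong \C_\nu\reflect s$. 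The comparison map $j^{\mu\circ\nu^\dagger}_{\Gamma,A} : \Gamma\mnce{\mu}{\nu}A \to \Gamma\ce \F{\mu\circ\nu^\dagger}A$ is then induced from the pre-modality map $i^{\mu\circ\nu^\dagger}_{\baseof A, \topof A}$: both comprehensions are defined as pullbacks over $\Gamma$ of maps obtained by applying $\incl{}$ to suitable morphisms out of $\C_q$, and $i$ furnishes the required comparison between their apices.

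For the elimination rule, given transparent $\varrho:q\to r$, $\Gamma\in\Chat_r$, $A\in\Tyhatb_p(\Chat_\nu\Chat^\mu\Chat^\varrho\Gamma)$, and $B\in\Tyhatb_r(\Gamma\mce\varrho(\F{\mu\circ\nu^\dagger}A))$, I would form $\ell$ as in \cref{defn:pos}. The key computation is that $\reflect r$ preserves finite limits (\cref{thm:chat-lim}) and satisfies the pseudonaturality $\reflect r\Chat_{\varrho\circ\mu} \cong \C_\varrho\C_\mu\reflect s$ together with $\reflect s\incl s \cong 1_{\C_s}$; chasing these isomorphisms through \eqref{eq:mnce} identifies $\reflect r\ell$ as a pullback of $\C_\varrho(i^{\mu\circ\nu^\dagger}_{\baseof A,\topof A})$ along the map $\reflect r\Gamma \to \C_\varrho\C_\mu\Cradj\nu\baseof A$ induced by $\nameof A$. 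By hypothesis $\C_\varrho(i^{\mu\circ\nu^\dagger}_{\baseof A, \topof A})$ is stably anodyne in $\C_r$, hence its pullback $\reflect r \ell$ is anodyne, so \cref{thm:smlock-anodyne} delivers the required diagonal filler for $\ell$ in $\Chat_r$.

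The main obstacle is bookkeeping rather than anything conceptual: one must carefully chase through the adjunctions $\reflect{}\dashv\incl{}$, $\C_\nu\dashv\Cradj\nu$, $\Chat^\mu\dashv\Chat_\mu$, and the pseudonaturality isomorphisms to verify that the images under $\reflect r$ of the various pullback definitions involving $\Chat_{\varrho\circ\mu}\incl s\Cradj\nu(\cdot)$ do collapse, as claimed, to pullbacks of $\C_\varrho(i)$ in $\C_r$. Naturality of the filler in $\Gamma$ is a secondary concern; it should follow from $i^{\mu\circ\nu^\dagger}_{\baseof A, \topof A}$ being a fixed morphism in $\C_r$ that is pulled back along varying maps out of $\reflect r\Gamma$, together with the strict-stability afforded by the local-universe construction underlying \cref{thm:lw}.
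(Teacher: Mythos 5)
Your proposal is correct and follows essentially the same route as the paper's proof: take $\baseof{\F{\mu\circ\nu^\dagger}A}=\Cunlock\mu\Cradj\nu\baseof A$ with top type the pre-modality, build the name by transposing $\nameof A$ through the adjunctions and the (lax) naturality constraint of $\incl{}$, induce $j$ from $i^{\mu\circ\nu^\dagger}_{\baseof A,\topof A}$ via the universal property of the comprehension pullback, and for elimination show that $\reflect{}(\ell)$ is a pullback of $\Cunlock\varrho(i^{\mu\circ\nu^\dagger}_{\baseof A,\topof A})$, hence anodyne, so that \cref{thm:smlock-anodyne} supplies the filler. The one point worth being careful about, which the paper makes explicit, is that the square relating $\Chat_\mu\incl{}\Cradj\nu(\cdot)$ to $\incl{}\Cunlock\mu\Cradj\nu(\cdot)$ is only a naturality square for the lax constraint and becomes a pullback only after applying $\reflect{}$ (which inverts that constraint) --- exactly the "bookkeeping" you flag.
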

\begin{proof}
  The sharp morphisms in $\LS$ are $\mu\circ\nu^\dagger$, where $\mu:q\to r$ is in $\L$ and $\nu:q\to p$ is in $\cS$.
  Suppose given these and also $\Gamma\in \Chat_r$ and $A\in \Tyhatb_p(\Chat^{\mu\circ\nu^\dagger} \Gamma) = \Tyb_p(\reflect p(\Chat^{\mu\circ\nu^\dagger}\Gamma))$, hence $\topof{A} \in \Ty_p(\baseof A)$ with
  $\nameof{A} : \reflect p \Chat^{\mu\circ\nu^\dagger}(\Gamma) \to \baseof{A}$.
  By \cref{def:pre-pos}, we have $\preF{\mu\tcirc\nu^\dagger}{\topof E} \in \Ty_q(\Cunlock\mu\Cradj\nu\baseof A)$ and
  \( i^{\mu\circ\nu^\dagger}_{\baseof A,\topof A} : \Cunlock\mu\Cradj\nu(\baseof A\ce \topof A)\to \Cunlock\mu\Cradj\nu\baseof A \ce (\preF{\mu\tcirc\nu^\dagger}{\topof A}) \)
  over $\Cunlock\mu\Cradj\nu\baseof A$,
  % \[
  %   \begin{tikzcd}[column sep=small]
  %     \Cunlock\mu\Cradj\nu(\baseof A\ce \topof A) \ar[dr] \ar[rr,dashed,"i^{\mu\circ\nu^\dagger}_{\baseof A,\topof A}"] & &
  %     \Cunlock\mu\Cradj\nu\baseof A \ce (\preF{\mu\tcirc\nu^\dagger}{\topof A}) \ar[dl] \\
  %     & \Cunlock\mu\Cradj\nu\baseof A
  %   \end{tikzcd}
  % \]
  such that $\Cunlock\varrho(i^{\mu\circ\nu^\dagger}_{\baseof A,\topof A})$ is stably anodyne for any transparent $\varrho$.
  We define $\baseof{\F{\mu\tcirc\nu^\dagger} A} = \Cunlock\mu\Cradj\nu\baseof A$ and $\topof{\F{\mu\tcirc\nu^\dagger} A} = \preF{\mu\tcirc\nu^\dagger}{\topof E}$.

  Now $\nameof{A} : \reflect p \Chat^{\mu\circ\nu^\dagger}(\Gamma) \cong \reflect p  \Chat_\nu \Chat^\mu(\Gamma) \cong \C_\nu \reflect q \Chat^\mu\Gamma \to \baseof{A}$ has an adjunct
  $\Gamma \to  \Chat_\mu \incl q \C_{\nu^\dagger}\baseof A$ (which we will sometimes denote also by $\nameof A$).
  Composing this with the lax naturality constraint of $\incl{}$, we get
  \[\Gamma \xto{\nameof A}  \Chat_\mu \incl q \C_{\nu^\dagger}\baseof A  \xto{\incl \mu} \incl r \Cunlock\mu \C_{\nu^\dagger}\baseof A = \incl r \baseof{\F{\mu\tcirc\nu^\dagger} A},\]
  whose adjunct $\reflect r \Gamma \to \baseof{\F{\mu\tcirc\nu^\dagger} A}$ we take as $\nameof{\F{\mu\tcirc\nu^\dagger} A}$.
  Finally, we define $j^{\mu\circ\nu^\dagger}_{\Gamma,A}$
  % : \Gamma\mce{\mu\circ\nu^\dagger} A \to \Gamma\ce (\F{\mu\tcirc\nu^\dagger} A)
  to make the diagram in \cref{fig:pos-mod-apm-intro} commute.
  This uses the universal property of the front rectangle as a pullback.
  Since $i^{\mu\circ\nu^\dagger}_{\baseof A,\topof A}$ is fixed along with the local universe, this definition of $j$ is strictly stable.
  This completes \cref{defn:pos}\cref{item:pos-form-intro}.

  Note that the left-hand square in back above is also a pullback (defining $\Gamma \mce{\mu\circ\nu^\dagger}A$), but the right-hand one is not: it is naturality of the lax constraint for $\incl{}$.
  However, since $\reflect r$ inverts this constraint, that square also becomes a pullback upon application of $\reflect r$.
  Thus, $\reflect r (j^{\mu\circ\nu^\dagger}_{\Gamma,A})$ is a pullback of $i^{\mu\circ\nu^\dagger}_{\baseof A,\topof A}$.

  For \cref{item:pos-elim-comp} of \cref{defn:pos}, let $\varrho : r\to s$ be in $\L$ (hence transparent in $\LS$), and suppose we have $A\in \Ty_p(\Chat^{\varrho\circ\mu\circ\nu^\dagger}(\Gamma))$.
  (Thus, the $\Gamma$ in the preceding proof of part~\cref{item:pos-form-intro} is now $\Chat^\varrho(\Gamma)$.)
  We first observe that in an adjoint modal pre-model, the construction of $\ell$ in \cref{defn:pos} is equivalent to the diagram in \cref{fig:pos-mod-apm-elim-1}, in which the map $k$ and the square \((\ast)\) are defined by the diagram in \cref{fig:pos-mod-apm-elim-2}.
  \begin{figure}
    \centering
    \begin{subfigure}{1.0\linewidth}
      \begin{center}
        \begin{tikzpicture}[->,xscale=1.4]
          \node (G) at (0,0) {$\Gamma$};
          \node (LmRCnV) at (3,0) {$\Chat_{\mu}\incl q \Cradj\nu \baseof A$};
          \draw (G) -- node[auto,swap] {$\scriptstyle \nameof A$} (LmRCnV);
          \node (RCmCnV) at (7,0) {$\incl r \Cunlock\mu\Cradj\nu \baseof A $};
          \draw (LmRCnV) -- node[auto,swap] {$\scriptstyle\incl \mu$} (RCmCnV);
          \node (GFmA) at (1,1.7) {$\Gamma\ce (\F{\mu\tcirc\nu^\dagger} A)$};
          \draw (GFmA) -- (G);
          \node (GmA) at (-1,3) {$\Gamma\mce{\mu\circ\nu^\dagger} A$};
          \draw (GmA) -- (G);
          \draw[dashed] (GmA) -- node[preaction={fill,white},inner sep=0pt] {$\scriptstyle j^{\mu\circ\nu^\dagger}_{\Gamma,A}$} (GFmA);
          \node (LmRCnVE) at (2,3) {$\Chat_{\mu}\incl q\Cradj\nu(\baseof A\ce \topof A)$};
          \draw (GmA) -- (LmRCnVE);
          \draw (LmRCnVE) -- (LmRCnV);
          \node (RCmCnVE) at (6,3) {$\incl r \Cunlock\mu\Cradj\nu(\baseof A\ce \topof A)$};
          \draw (RCmCnVE) -- (RCmCnV);
          \node[fill,white] at (8,1.7) {$\incl r (\Cunlock\mu\Cradj\nu\baseof A \ce (\preF{\mu\tcirc\nu^\dagger}{\topof A}))$};
          \node (RCmCnVmnE) at (8,1.7) {$\incl r (\Cunlock\mu\Cradj\nu\baseof A \ce (\preF{\mu\tcirc\nu^\dagger}{\topof A}))$};
          \draw (RCmCnVE) -- node[auto,near end] {$\scriptstyle \incl r (i^{\mu\circ\nu^\dagger}_{\baseof A,\topof A})$} (RCmCnVmnE);
          \draw (RCmCnVmnE) -- (RCmCnV);
          \draw[white,line width=3mm] (GFmA) -- (RCmCnVmnE);
          \draw (GFmA) -- (RCmCnVmnE);
          \draw (LmRCnVE) -- (RCmCnVE);
        \end{tikzpicture}
      \end{center}
      \caption{Introduction rule}
      \label{fig:pos-mod-apm-intro}
    \end{subfigure}
    \begin{subfigure}{1.0\linewidth}
      \begin{multline*}
        \begin{tikzcd}[ampersand replacement=\&]%[column sep=small]
          \Gamma \mce{\varrho\circ{\mu\circ\nu^\dagger}} A \ar[r,dashed,"\ell"] \ar[d] \&
          \Gamma\mce{\varrho} (\F{\mu\tcirc\nu^\dagger} A) \ar[r] \ar[d]\drpullback \&
          \Chat_{\varrho}(\Chat^\varrho(\Gamma)\ce (\F{\mu\tcirc\nu^\dagger} A)) \ar[d] \\
          \Gamma \ar[r,equals] \&
          \Gamma \ar[r,"\eta"'] \&
          \Chat_{\varrho}\Chat^\varrho(\Gamma)
        \end{tikzcd}
        \\ = \quad
        \begin{tikzcd}[ampersand replacement=\&]%[column sep=small]
          \Gamma \mce{\varrho\circ{\mu\circ\nu^\dagger}} A \ar[r,"k"] \ar[d] \ar[dr,phantom,"\scriptstyle(\ast)"] \&
          \Chat_\varrho(\Chat^\varrho(\Gamma) \mce{{\mu\circ\nu^\dagger}} A)  \ar[d] \ar[rr,"\Chat_\varrho(j^{\mu\circ\nu^\dagger}_{\Chat^\varrho(\Gamma),A})"] \&\&
          \Chat_\varrho(\Chat^\varrho(\Gamma)\ce (\F{\mu\tcirc\nu^\dagger} A)) \ar[d] \\
          \Gamma \ar[r,"\eta"'] \&
          \Chat_\varrho\Chat^\varrho(\Gamma) \ar[rr,equals] \&\&
          \Chat_\varrho\Chat^\varrho(\Gamma),
        \end{tikzcd}
      \end{multline*}
      \caption{Elimination rule, part 1}
      \label{fig:pos-mod-apm-elim-1}
    \end{subfigure}
    %   where the map $k$ and the square \((\ast)\) are defined by
    \begin{subfigure}{1.0\linewidth}
      \begin{equation*}
        \begin{tikzcd}[ampersand replacement=\&,column sep=small]
          \Gamma \mce{\varrho\circ{\mu\circ\nu^\dagger}} A \ar[r,dashed,"k"] \ar[d] \ar[dr,phantom,"\scriptstyle(\ast)"] \&
          \Chat_\varrho(\Chat^\varrho(\Gamma) \mce{{\mu\circ\nu^\dagger}} A)  \ar[d] \ar[r] \drpullback \&
          \Chat_\varrho\Chat_\mu\incl q \Cradj\nu (\baseof A \ce \topof A) \ar[d] \\
          \Gamma \ar[r,"\eta"'] \&
          \Chat_\varrho\Chat^\varrho\Gamma \ar[r,"\Chat_\varrho(\nameof A)"'] \&
          \Chat_\varrho\Chat_\mu\incl q \Cradj\nu \baseof A
        \end{tikzcd}
        =
        \begin{tikzcd}[ampersand replacement=\&,column sep=small]
          \Gamma \mce{\varrho\circ{\mu\circ\nu^\dagger}} A \ar[r] \ar[d] \drpullback \&
          \Chat_{\mu\circ\varrho}\incl q \Cradj\nu (\baseof A \ce \topof A) \ar[d] \\
          \Gamma \ar[r,"\nameof A "'] \&
          \Chat_{\mu\circ\varrho}\incl q \Cradj\nu \baseof A.
        \end{tikzcd}
      \end{equation*}
      \caption{Elimination rule, part 2}
      \label{fig:pos-mod-apm-elim-2}
    \end{subfigure}
    \caption{Positive modalities in an adjoint modal pre-model}
    \label{fig:pos-mod-apm}
  \end{figure}
  Then \((\ast)\) is a pullback, so $\ell$ is a pullback of $\Chat_\varrho(j^{\mu\circ\nu^\dagger}_{\Chat^\varrho(\Gamma),A})$.
  Hence $\reflect s(\ell)$ is a pullback of $\reflect s \Chat_\varrho(j^{\mu\circ\nu^\dagger}_{\Chat^\varrho(\Gamma),A})$, which is isomorphic to $\Cunlock\varrho \reflect r(j^{\mu\circ\nu^\dagger}_{\Chat^\varrho(\Gamma),A})$.
  But we observed that $\reflect r(j^{\mu\circ\nu^\dagger}_{\Chat^\varrho(\Gamma),A})$ is a pullback of $i^{\mu\circ\nu^\dagger}_{\baseof A,\topof A}$; thus $\reflect s(\ell)$ is also a pullback of $\Cunlock\varrho(i^{\mu\circ\nu^\dagger}_{\baseof A,\topof A})$.
  By \cref{def:pre-pos}, $\Cunlock\varrho(i^{\mu\circ\nu^\dagger}_{\baseof A,\topof A})$ is stably anodyne; hence $\reflect s(\ell)$ is anodyne.
  Thus the fillers required by \cref{item:pos-elim-comp} exist; we make them strictly stable as in~\cite[Lemmas 3.4.1.4 and 3.4.3.2]{lw:localuniv}.
\end{proof}

\subsection{Negative modalities}
\label{sec:neg-mod}

%In this case, we need to refer to the right adjoints, so even the pre-notion must be formulated with respect to $\LS$ rather than an arbitrary $\M$.

\begin{definition}\label{def:pre-neg}
  A modal pre-model $\C$ has \textbf{negative pre-modal\-ities} if for any \emph{sinister} $\mu:p\to q$, and $\Gamma\in \C_q$ with $A\in \Ty_q(\Gamma)$, we have $\preU\mu A \in \Ty_p(\Cradj\mu\Gamma)$ such that $\Cradj\mu\Gamma\ce(\preU\mu A) \cong \Cradj\mu(\Gamma\ce A)$ over $\Cradj\mu\Gamma$.
  % the functor $\Cunlock\mu : \C_p\to \C_q$ has a dependent right adjoint.
\end{definition}

% Recall that by definition of an adjoint modal pre-model (\cref{def:apm}), each such $\Cunlock\mu$ has an \emph{ordinary} right adjoint $\Cradj\mu$.
% We thus observe the following interaction between adjoints and dependent adjoints.

% \begin{lemma}
%   If $F:\C\to\D$ has both a right adjoint $G$ and a dependent right adjoint $\hat{G}$, then for any $\Gamma\in \C$ and $A\in \Ty_D(F\Gamma)$ we have a pullback square
%   \[
%     \begin{tikzcd}
%       \Gamma \ce \hat{G}A \ar[r] \ar[d] \drpullback & G(F\Gamma \ce A) \ar[d]\\
%       \Gamma \ar[r,"\eta"'] & GF\Gamma
%     \end{tikzcd}
%   \]
% \end{lemma}
% \begin{proof}
%   By the universal property of a dependent right adjoint (and of comprehension), lifts of $\gamma : \Delta \to \Gamma$ to $\Gamma \ce \hat{G} A$ are naturally bijective to lifts of $F\gamma : F\Delta \to F\Gamma$ to $F\Gamma\ce A$.
%   But by the universal property of a right adjoint, these are naturally bijective to lifts of the composite $\Delta \xto{\gamma} \Gamma \xto{\eta}GF\Gamma$ to $G(F\Gamma\ce A)$, which is the universal property of the above pullback.
% \end{proof}

\begin{theorem}\label{thm:chat-neg}
  If $(\Chat,\C)$ is an adjoint modal pre-model over $(\L,\cS)$ such that $\C$ has negative pre-modalities over $\LS$, then $(\Chat,\tauhatb)$ has negative modalities over $\LS$.
\end{theorem}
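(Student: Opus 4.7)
The plan is to build $\U\mu A$ directly from the negative pre-modality $\preU\mu$, exploiting the three layers of adjunction available: $\reflect p \dashv \incl p$ between $\Chat_p$ and $\C_p$, the adjunction $\C_\mu \dashv \Cradj\mu$ in the base pseudofunctor $\C$, and pseudonaturality of $\reflect{}$ which witnesses $\reflect q \circ \Chat_\mu \cong \C_\mu \circ \reflect p$. The key observation is that the comprehension formula~\eqref{eq:mnce} in $\Chat$ factors through $\incl p$, so that the isomorphism $\Cradj\mu \Gamma \ce \preU\mu A \cong \Cradj\mu(\Gamma\ce A)$ from \cref{def:pre-neg} matches up precisely with the computation of $\Gamma \ce \U\mu A$ in $\Chat_p$.

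First, given sinister $\mu : p\to q$, $\Gamma \in \Chat_p$, and $A \in \Tyhatb_q(\Chat_\mu \Gamma) = \Tyb_q(\reflect q\Chat_\mu\Gamma)$, rewrite $\nameof A : \reflect q \Chat_\mu \Gamma \to \baseof A$ as $\nameof A : \C_\mu\reflect p\Gamma \to \baseof A$ using pseudonaturality, and transpose under $\C_\mu \dashv \Cradj\mu$ to obtain $\widetilde{\nameof A} : \reflect p\Gamma \to \Cradj\mu\baseof A$. Then define $\U\mu A \in \Tyhatb_p(\Gamma)$ by $\baseof{\U\mu A} = \Cradj\mu\baseof A$, $\topof{\U\mu A} = \preU\mu{\topof A}$, and $\nameof{\U\mu A} = \widetilde{\nameof A}$. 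Since every operation in sight acts on the local-universe data in a strict and functorial way, this definition is strictly stable under substitution in $\Gamma$ and strictly compatible with the map $\Chat_\mu$.

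Next, I would unfold the comprehension. By~\eqref{eq:mnce}, $\Gamma\ce \U\mu A$ is the pullback of $\incl p(\baseof{\U\mu A}\ce \topof{\U\mu A})\to \incl p\baseof{\U\mu A}$ along the transpose of $\nameof{\U\mu A}$. Substituting the definitions and invoking the pre-modality identification gives $\baseof{\U\mu A}\ce\topof{\U\mu A} \cong \Cradj\mu(\baseof A\ce\topof A)$, so that $\Gamma\ce\U\mu A$ is the pullback of $\incl p\Cradj\mu(\baseof A\ce\topof A)\to \incl p\Cradj\mu \baseof A$ along the transpose of $\widetilde{\nameof A}$. Now a term of $\U\mu A$ over $\Gamma$ is a section of this pullback, equivalently a map $\Gamma \to \incl p\Cradj\mu(\baseof A\ce\topof A)$ over $\incl p\Cradj\mu \baseof A$. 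Transposing this successively along $\reflect p\dashv \incl p$, $\C_\mu\dashv\Cradj\mu$, and $\incl q\dashv\reflect q$ (using pseudonaturality to commute $\C_\mu$ past $\reflect{}$) converts it into a map $\Chat_\mu\Gamma \to \incl q(\baseof A\ce\topof A)$ over the transpose of $\nameof A$, which is exactly a term of $A$ over $\Chat_\mu\Gamma$. This natural isomorphism, together with the matching identification of types, gives the required pullback square for a dependent right adjoint.

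The main obstacle is bookkeeping for coherence: one must check that the chain of transpositions is natural in $\Gamma$ and strictly compatible with the substitution action of $\Chat^\theta$ and $\Chat_\mu$ on $\Tyhatb$ and $\Tmhatb$. This coherence reduces to pseudonaturality of $\reflect{}$ and $\incl{}$ together with strict naturality of the $\C_\mu \dashv \Cradj\mu$ adjunction; no new strictification is needed beyond what the local-universe presentation already provides, since the local-universe data $(\baseof{\U\mu A},\topof{\U\mu A})$ depends only on $A$ and not on $\Gamma$. Unlike the positive case (\cref{thm:chat-pos}), no anodyne/lifting manipulation is necessary, because the negative pre-modality axiom already delivers an on-the-nose isomorphism of comprehensions rather than a mere lifting property.
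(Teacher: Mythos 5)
Your proposal is correct and follows essentially the same route as the paper: the same local-universe definition of $\U\mu A$ (base $\Cradj\mu\baseof A$, top $\preU\mu{\topof A}$, name the adjunct of $\nameof A$ under $\Cunlock\mu\dashv\Cradj\mu$) and the same identification of $\Gamma\ce(\U\mu A)$ with the modal comprehension $\Gamma\mce{\mu^\dagger}A$ via the pre-modality isomorphism, your extra unfolding of sections through the chain of adjunctions being just an explicit restatement of that comprehension isomorphism. (One minor notational slip: the adjunction is $\reflect q\dashv\incl q$, not $\incl q\dashv\reflect q$, though you transpose in the correct direction.)
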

\begin{proof}
  Let $\mu:p\to q$ be in $\cS$, and $\Gamma\in \Chat_p$ with $A\in \Tyhatb_q(\Chat_\mu\Gamma) = \Tyb_q(\reflect q \Chat_\mu\Gamma)$.
  Thus, we have $\topof A \in \Ty_q(\baseof A)$ and
  \(\nameof A : \Cunlock\mu\reflect p \Gamma \cong \reflect q \Chat_\mu\Gamma \to \baseof A\).
  By assumption, we have $\preU\mu{\topof A} \in \Ty_p(\Cradj\mu\baseof A)$ and
  \(\Cradj\mu\baseof A \ce (\preU\mu{\topof A}) \cong \Cradj\mu(\baseof A \ce \topof A)\)
  over $\Cradj\mu{\baseof A}$.
  We define $\baseof{\U\mu A} = \Cradj\mu \baseof A$ and $\topof{\U\mu A} = \preU\mu {\topof A}$, and let $\nameof{\U\mu A} : \reflect p \Gamma \to \Cradj\mu \baseof A$ be the adjunct of $\nameof A$ under $\Cunlock\mu\dashv \Cradj\mu$.
  This defines $\U\mu A \in \Tyhatb_p(\Gamma)$; we must show $\Gamma\ce (\U\mu A) \cong \Gamma \mce{\mu_*} A$.
  Now $\Gamma\ce (\U\mu A)$ is defined by the pullback square at left below:
  \[
    \begin{tikzcd}
      \Gamma\ce (\U\mu A)  \ar[d] \ar[r]\drpullback &
      \incl p(\Cradj\mu\baseof A \ce (\preU\mu{\topof A})) \ar[d] \ar[r,"\cong"] &
      \incl p \Cradj\mu(\baseof A \ce \topof A) \ar[d] \\
      \Gamma \ar[r] &
      \incl p \Cradj\mu\baseof A \ar[r,equals] &
      \incl p \Cradj\mu\baseof A
    \end{tikzcd}
  \]
  Composing with the isomorphism on the right, we obtain the defining pullback of $\Gamma \mce{\mu_*} A$ as in~\eqref{eq:mnce}.
\end{proof}

\section{Diagrams of 1-topoi}
\label{sec:examples}

Combining \cref{thm:chat-pi,thm:chat-pos,thm:chat-neg}, we have the following.
(Recall \cref{assume:ls}.)

\begin{theorem}\label{thm:chat-matt}
  Let $\L$ be a 2-category with a class of morphisms $\cS$.
  If an adjoint modal pre-model $(\Chat,\C)$ over $(\L,\cS)$ is such that $\C$ has pre-$\Pi$-structure, positive pre-modalities, and negative pre-modalities over $\LS$, then $(\Chat,\tauhatb)$ models MATT over $\LS$.\qed
\end{theorem}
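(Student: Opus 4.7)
The plan is simply to assemble the three preceding theorems, together with the underlying natural model structure, and observe that the combined data is exactly the syntactic machinery required to interpret MATT over $\LS$. No new constructions are needed; the work is entirely in verifying that the hypotheses of \cref{thm:chat-pi,thm:chat-pos,thm:chat-neg} are discharged by the assumptions and that the resulting structures on $(\Chat,\tauhatb)$ correspond precisely to the rules listed in \cref{fig:ctx,fig:var,fig:func,fig:pos-mod,fig:neg-mod}.

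First I would invoke \cref{thm:lw-modal}, which already yields from the hypothesis ``adjoint modal pre-model'' that $(\Chat,\tauhatb)$ is an adjoint modal natural model over $\LS$. This provides the modal context structure $\Chat:\ladj{\LS}\coop\to\cat$, together with representability of $(\Chat^\mu)^*\tauhatb_p$ for every tangible $\mu$; these in turn supply the interpretation of the context formation, substitution, and variable rules of \cref{fig:ctx,fig:var}, with $\Gamma\divby\mu$ interpreted by $\Chat^\mu$ and the key-substitution rule by the 2-functoriality of $\Chat$ on $\ladj{\LS}$ established in \cref{thm:chat-psfr}.

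Next I would apply \cref{thm:chat-pi} to obtain $\Pi$-structure on $(\Chat,\tauhatb)$, giving the formation, introduction, elimination, $\beta$- and $\eta$-rules of \cref{fig:func}; \cref{thm:chat-pos} to obtain positive modalities, giving \cref{fig:pos-mod}; and \cref{thm:chat-neg} to obtain negative modalities, giving \cref{fig:neg-mod}. Each of these rests on the corresponding ``pre-'' structure on $\C$ that is assumed in the statement, so no extra verification is required beyond invoking the theorems.

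The only delicate point, and what I would expect to be the main obstacle if one were fussier, is confirming that the strictly stable structures produced by \cref{thm:chat-pi,thm:chat-pos,thm:chat-neg} agree pointwise with the rule schemas of MATT on the nose, including the interaction of the sharp/transparent/sinister/tangible classifications of morphisms in $\LS$ (as specified in \cref{eg:ls}) with the domains of variable annotations, function-type domains, positive modalities, framings, and negative modalities. Since each of the three theorems was stated and proved with exactly these restrictions built into its hypothesis, this check amounts to matching up side-conditions, and one concludes that $(\Chat,\tauhatb)$ models MATT over $\LS$.
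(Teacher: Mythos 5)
Your proposal matches the paper's own argument: the theorem is stated immediately after the sentence ``Combining \cref{thm:chat-pi,thm:chat-pos,thm:chat-neg}, we have the following'' and carries a \(\square\) with no further proof, so the intended justification is exactly the assembly of \cref{thm:lw-modal} (for the underlying adjoint modal natural model) with \cref{thm:chat-pi,thm:chat-pos,thm:chat-neg} that you describe. Your closing remark about matching the tangible/sharp/transparent/sinister side-conditions is also consistent with how those theorems were set up, so nothing further is needed.
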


Any category with pullbacks has a canonical natural pseudo-model where all maps are type projections.

\begin{lemma}\label{thm:set-pre}
  Let $\M$ be an adjoint mode theory, and $\C:\M\to \cat$ be a pseudofunctor such that each $\C_p$ is locally cartesian closed.
  If we make $\C$ a modal pre-model in the canonical way, as above, then it has pre-$\Pi$-structure, positive pre-modalities, and negative pre-modalities.
\end{lemma}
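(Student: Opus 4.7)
\medskip

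The plan is to note that in the canonical natural pseudo-model, $\Ty_p(\Gamma)$ is the full slice $\C_p/\Gamma$, every morphism is a type projection, and the comprehension $\Gamma\ce A$ of $A : X \to \Gamma$ is simply $X$ with $\p_A = A$. Under this identification, each structure we must verify reduces to an almost trivial assertion about $\C_p$ or about pseudofunctoriality. I would handle the three clauses in the order they were defined.

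For pre-$\Pi$-structure, let $\mu : p \to q$ be sharp and let $A : X \to \Gamma$ in $\C_p$. Any pullback of $\C_\mu\p_A$ is just some morphism in $\C_q$. Type-exponentiability of such a morphism $\delta : \Gamma' \to \Delta'$ means that for each $B \in \Ty_q(\Gamma')$, i.e.\ any morphism with codomain $\Gamma'$, the pushforward along $\delta$ exists and is again a morphism of $\C_q$ (automatically a type projection). Since $\C_q$ is locally cartesian closed, every morphism is exponentiable, so $\delta_*B$ exists and delivers the required type in $\Ty_q(\Delta')$. No further naturality is needed, since the canonical model has no strictness to worry about.

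For positive pre-modalities, I would take the tautological choice $\preF\mu A := \C_\mu(\p_A)$, regarded as a type in $\Ty_q(\C_\mu\Gamma)$. Then the comprehension $\C_\mu\Gamma \ce (\preF\mu A)$ is literally $\C_\mu(\Gamma\ce A)$, so I let $i^\mu_{\Gamma,A}$ be the identity. For any transparent $\varrho : q \to r$, the morphism $\C_\varrho(i^\mu_{\Gamma,A})$ is again an identity. Identities are trivially anodyne in the sense of \cref{def:anodyne} (take the filler to be the given lift $g$), and any pullback of an identity is an identity; hence $\C_\varrho(i^\mu_{\Gamma,A})$ is stably anodyne.

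For negative pre-modalities, given sinister $\mu : p \to q$, the pseudofunctor $\C : \M \to \cat$ sends the adjunction $\mu \dashv \mu^\dagger$ in $\M$ to an adjunction $\C_\mu \dashv \C_{\mu^\dagger} = \Cradj\mu$ in $\cat$ (so $\Cradj\mu$ is literally a right adjoint of $\C_\mu$ on $\C_p$). I would then take $\preU\mu A := \Cradj\mu(\p_A)$; the comprehension $\Cradj\mu\Gamma \ce (\preU\mu A)$ is $\Cradj\mu(\Gamma\ce A)$ on the nose. Thus no obstacle arises in any clause; the only substantive ingredient used is local cartesian closure (for pre-$\Pi$-structure), and all labels ``sharp'', ``transparent'', ``sinister'' are satisfied uniformly because the canonical pseudo-model has no structure that could fail.
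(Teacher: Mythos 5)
Your proof is correct and follows essentially the same route as the paper, whose entire argument is that local cartesian closure plus ``everything is a type projection'' gives pre-$\Pi$-structure, and that $i^\mu_{\Gamma,A}$ (resp.\ the comparison isomorphism for $\preU\mu{}$) can be taken to be an identity. The only quibble is that a pullback of an identity is in general an isomorphism rather than literally an identity, but isomorphisms are anodyne for the same reason, so your verification of stable anodyneness goes through unchanged.
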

\begin{proof}
  Since $\C_p$ is locally cartesian closed and everything is a type projection, we have pre-$\Pi$-structure.
  For positive pre-modalities we take $i^\mu_{\Gamma,A}$ to be an identity, and similarly for negative pre-modalities.
\end{proof}

% Putting everything together in this case yields the following conclusion.

\begin{theorem}\label{thm:set-model}
  Let $\kappa$ be an infinite regular cardinal, $\L$ a $\kappa$-small 2-category with a class of morphisms $\cS$, and $\C:\L\to \cat$ a pseudofunctor such that each $\C_p$ is locally cartesian closed with $\kappa$-small limits, each $\C_\mu$ preserves $\kappa$-small limits, and has a right adjoint if $\mu\in\cS$.
  Then $\Chat$ models extensional MATT over $\LS$.
\end{theorem}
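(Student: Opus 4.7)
The plan is to apply \cref{thm:chat-matt}, so the task reduces to exhibiting $(\Chat,\C)$ as an adjoint modal pre-model over $(\L,\cS)$ in which $\C$ carries pre-$\Pi$-structure, positive pre-modalities, and negative pre-modalities over $\LS$. First I would extend the given pseudofunctor $\C : \L \to \cat$ to a pseudofunctor $\C : \LS \to \cat$ by sending each formally adjoined $\mu^\dagger$ (for $\mu\in\cS$) to the right adjoint of $\C_\mu$ guaranteed by hypothesis, invoking the 2-universal property of $\LS$. I would then equip each $\C_p$ with its canonical natural pseudo-model in which every morphism is a type projection — legitimate because local cartesian closure supplies all pullbacks — whereupon \cref{thm:set-pre} furnishes all three required pre-structures in one blow.

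What remains is to verify \cref{def:apm} for $(\Chat,\C)$. The hypotheses of \cref{assume:limits} match the stated hypotheses on $\C : \L\to\cat$ verbatim, so the constructions of Section~\ref{sec:cofree} apply. The modal context structure $\Chat : \ladj{\L}\coop \to \cat$ is supplied by \cref{thm:chat-psfr}, assembled from \cref{thm:chat-locks} together with the right adjoints furnished by \cref{thm:chat-radj}; each $\Chat_p$ is locally cartesian closed by \cref{thm:chat-lim}\cref{item:chat2}; and each $\reflect p$ preserves finite limits by \cref{thm:chat-lim}, with fully faithful right adjoint $\incl p$ by \cref{thm:smlock-adj} and \cref{thm:up-ff}. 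Pseudonaturality of $\reflect{}$ between pseudofunctors $\L\to\cat$ is \cref{thm:smlock1-psnat}. Feeding the resulting pre-model into \cref{thm:chat-matt} then yields a model of MATT in $(\Chat,\tauhatb)$, which is ``extensional'' in the sense that making every map a type projection forces extensional identity types and no universe is claimed.

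The only point demanding genuine attention rather than citation is the coherence between the right adjoints produced internally by \cref{thm:chat-radj} on the $\Chat$ side and the chosen right adjoints of $\cS$-morphisms on the $\C$ side: they must mesh through $\reflect{}$ so that $\reflect p \circ \Chat_\mu \cong \Cradj\mu \circ \reflect q$ for $\mu\in\cS$. This follows from doctrinal adjunction (\cref{thm:up-lax}) together with uniqueness of mates, since $\reflect{} \dashv \incl{}$ is set up compatibly with the adjunctions on both sides; once it is verified, the theorem falls out as a clean corollary of \cref{thm:chat-matt,thm:set-pre}.
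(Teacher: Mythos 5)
Your proposal is correct and follows essentially the same route as the paper: reduce to \cref{thm:chat-matt} via \cref{thm:set-pre} with the canonical "everything is a type projection" pseudo-model, and use \cref{thm:chat-lim} together with the lemmas of \cref{sec:cofree} to verify that $(\Chat,\C)$ is an adjoint modal pre-model (your worry about the $\cS$-side right adjoints is already discharged by \cref{thm:smlock1-psnat} plus uniqueness of adjoints, exactly as you suggest). The one step you gloss over is the word "extensional": the paper obtains $\Sigma$-types and extensional identity types at each mode by noting that composition and diagonals give \emph{weakly stable} such structure on each $\C_p$ and then strictifying via \cref{thm:lw}, whereas you assert this only in passing without the strictification step.
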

\begin{proof}
  By \cref{thm:chat-lim}, local cartesian closure lifts from $\C$ to $\Chat$.
  Thus, $(\Chat,\C)$ is an adjoint modal pre-model, so \cref{thm:chat-matt,thm:set-pre} yield a model of MATT.
  Composition and diagonals yield weakly stable $\Sigma$-types and extensional identity types in each $\C_p$, hence mode-locally by \cref{thm:lw}.
\end{proof}

\begin{remark}
  In addition, the following should follow from \cref{thm:chat-lim,thm:lw}.
  \begin{itemize}
  \item If each $\C_p$ has finite coproducts, then $\Chat$ models sum types at each mode.
  \item If each $\C_p$ is locally presentable and each $\C_\mu$ is accessible, then each $\Chat_p$ is again locally presentable.
    Thus, by the methods of~\cite{ls:hits}, $\Chat$ models inductive types and quotient-inductive types at each mode.
    % The data for a (Q)IT then determines an accessible monad on some slice as in~\cite{ls:hits}, and the algebraic coproduct~\cite{kelly:transfinite} of this monad with a fiberwise version of the reflector $\reflect p$ has an initial algebra that is a type projection and determines a pseudo-stable (Q)IT, and \cref{thm:lw} again applies.
  \item If $\C$ is a diagram of Grothendieck topoi and geometric morphisms, then each $\Chat_p$ is also a topos.
    Thus, if there are enough inaccessible cardinals, $\Chat$ models universes at each mode (see~\cite{hs:lift-univ,streicher:universes,gss:univ-topoi,shulman:univinj}).
    % (See~\cite[Appendix A]{shulman:univinj} for a version of the left adjoint splitting that incorporates universes.)
  \end{itemize}
%  We leave the details to the reader.
\end{remark}

% A particularly important and interesting case of \cref{thm:set-model} is the following.
Let $\topos$ denote the 2-category of Grothendieck topoi, geometric morphisms, and transformations.
% The 2-category $\topos\coop$ is sometimes called the 2-category of \emph{logoi}.

\begin{theorem}\label{thm:topos}
  Let $\L$ be a finite 2-category and $\E : \L\coop \to \topos$ a pseudofunctor.
  Then the co-dextrification $\Ehat$ models extensional MATT over $\ladj{\L}$, with positive and negative modalities representing inverse image and direct image functors respectively, and extensional MLTT at each mode.\qed
\end{theorem}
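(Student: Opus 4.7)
The plan is to derive this as a direct corollary of \cref{thm:set-model}, after converting the contravariant diagram of topoi into a covariant diagram of categories in the shape required by the co-dextrification. Concretely, I would define a pseudofunctor $\C : \L \to \cat$ by $\C_p = \E_p$ and, for each $\mu : p\to q$ in $\L$, $\C_\mu = (\E^\mu)^* : \E_p \to \E_q$, the inverse image part of the geometric morphism $\E^\mu : \E_q \to \E_p$ given by applying $\E$ to $\mu$ (the direction flip comes from $\L\coop$). Pseudofunctoriality of $\C$ inherits from that of $\E$ together with the standard pseudofunctorial composition of inverse images. I then take $\cS = \L$, so every morphism of $\L$ is sinister in $\LS = \ladj{\L}$ and generates both a positive and a negative modality according to \cref{eg:ls} and \cref{assume:ls}.

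The next step is to verify the hypotheses of \cref{thm:set-model} with $\kappa = \omega$. Since $\L$ is finite it is $\omega$-small, and each hom-category (over which limits are taken in the co-dextrification) is finite. Each $\C_p = \E_p$, being a Grothendieck topos, is locally cartesian closed and has all small limits, in particular finite ones. Each $\C_\mu = (\E^\mu)^*$ preserves finite limits by definition of a geometric morphism. And for every $\mu \in \cS = \L$, $\C_\mu$ has a right adjoint, namely the direct image $(\E^\mu)_*$. Thus \cref{thm:set-model} applies and yields that $\Ehat$ models extensional MATT over $\LS = \ladj{\L}$, with extensional Martin-L\"of type theory at each mode arising from local cartesian closure of each $\E_p$.

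Finally I would unpack the interpretation of the two kinds of modalities to match the statement. By \cref{thm:chat-pos}, the positive modality $\F\mu$ for $\mu \in \L$ (sharp, viewed as $\mu \circ 1^\dagger$ in $\LS$) is built using $\C_\mu = (\E^\mu)^*$, hence represents the inverse image functor. By \cref{thm:chat-neg}, the negative modality $\U\mu$ for sinister $\mu \in \cS = \L$ is built using $\Cradj\mu$, which on local universes is the right adjoint of $\C_\mu$, i.e.\ $(\E^\mu)_*$; pseudonaturality of $\reflect{}$ (\cref{thm:smlock1-psnat}) ensures that this right adjoint genuinely acts mode-locally as the direct image. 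There is no real obstacle: everything substantive is already done in \cref{sec:cofree,sec:cofree-modal-natural}. The only place where care is needed is variance bookkeeping, but the contravariance of $\E$ is exactly what is required to arrange the inverse images covariantly on $\L$ and expose the adjoint pair $(\E^\mu)^* \dashv (\E^\mu)_*$ to the machinery of co-dextrification.
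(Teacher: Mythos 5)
Your proposal is correct and follows exactly the route the paper intends: the theorem is stated with an immediate \qed precisely because it is the instantiation of \cref{thm:set-model} with $\kappa=\omega$, $\cS=\L$, and $\C_\mu$ the inverse image of $\E^\mu$ (so that the right adjoints required for sinister morphisms are the direct images). Your variance bookkeeping and the identification of which functors feed the positive versus negative modalities match the constructions in \cref{thm:chat-pos,thm:chat-neg}.
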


\begin{remark}
  \cref{thm:topos} does not state explicitly how to extract conclusions about $\E$ from the interpretation of MATT in $\Ehat$.
  We will not try to make this precise here, but the idea is that $\Ehat_p$ can be viewed as a ``presentation'' of $\E_p$ via the reflector $\reflect p : \Ehat_p \to \E_p$, and that the interpretation of MATT respects this ``quotient''.
  For instance, the anodyne context morphisms (\cref{def:anodyne}) in $\Ehat_p$ are precisely those that are inverted by $\reflect p$; thus MATT is ``unable to distinguish'' contexts that present the same object of $\E_p$.
  One way to make this more precise is using Quillen model categories.
\end{remark}

\begin{table}
  \centering
  \begin{tabular}{p{3.8cm}p{5.7cm}p{1cm}p{5cm}}
    2-category $\L$ & Semantics & $\dashv$ ? & MATT related to\\\hline
    Single morphism & Geometric morphism \newline \labelitemi\ Cartesian comonad coalgebras & no\newline no &
    AdjTT~\cite{zwanziger:natmod-comnd} \newline CoTT~\cite{zwanziger:natmod-comnd}\\
    % $\mu\dashv \nu$ & Countably cocontinuous adjoint triple & no & \\
    Idempotent monad & Totally connected topos & no &
    Parametric TT~\cite{nvd:parametric-quant} \\
    Idempotent comonad &
    Local topos (\cref{eg:local}) \newline
    \labelitemi\ Johnstone's topos~\cite{ptj:topological-topos} \newline
    \labelitemi\ $\kappa$-condensed sets~\cite{scholze:lec-condensed,bh:pyknotic-i} \newline
    \labelitemi\ Cohesive topos~\cite{lawvere:cohesion,schreiber:dcct} &
    no\newline no \newline no \newline yes & Spatial TT~\cite{shulman:bfp-realcohesion}, Crisp TT~\cite{lops:internal-universes} \\
    Idem.\ monad w/ $\triangleright$ &  Topos of trees & yes &
    Guarded TT\newline \cite[\S9]{gknb:mtt} and~\cite[\S VI]{gckgb:fitchtt}\\
    Meet-semilattice &
    Commuting foci \newline
    \labelitemi\ Simplicial spaces (\cref{eg:local}) \newline
    \labelitemi\ Differential cohesive topos & no\newline no\newline yes & \cite{mr:commuting-cohesions}\newline \newline \cite{glnprsw:dctt}\\
    Idempotent bimonad & Parametrized spectra\tablefootnote{This is an $\infty$-topos without any 1-categorical analogue, so it is not covered by the semantic results in this paper.} & yes & \cite{rfl:synthetic-spectra}\\
    \cref{eg:gr} & Geometric realization & no & 
  \end{tabular}
  \caption{Instantiations of MATT and their semantics}
  \label{tab:egs}
\end{table}

We end by discussing some examples of simple classes of diagrams in $\topos$, to explore the flexibility and the limits of \cref{thm:set-model,thm:topos}.
As we will see, in some cases extra left adjoints already exist, so that co-dextrification is not necessary; but even in this case, some coherence results like those of \cref{sec:cofree-modal-natural} are often still needed (see \cref{rmk:id-lw}).
\Cref{tab:egs} summarizes some of the following examples, along with whether left adjoints already exist, and pointers to related theories in the literature.

\begin{example}\label{eg:adj}
  If $\L$ consists of two objects $p,q$ and one nonidentity morphism $\mu:p\to q$, then a functor $\L\coop\to \topos$ is a single geometric morphism.
  The resulting instance of MATT has two modes related by an adjoint pair of modalities $\F\mu\blank$ and $\U\mu\blank$.
  It is related to the split-context theory AdjTT of~\cite{zwanziger:natmod-comnd}, and can be interpreted in any geometric morphism.

  In particular, there is a unique geometric morphism from any topos $\E$ to $\Set$.
  The resulting instance of MATT combines the usual internal language of $\E$ at one mode with the classical world of $\Set$ at another mode, with a ``discrete objects'' modality $\F\mu\blank$ taking any set to an object of $\E$, having a right adjoint ``global sections'' modality $\U\mu\blank$.
  This allows us to use the internal logic of $\E$ but also make ``external'' statements when needed, e.g.\ to study the cohomology of $\E$, or ``global'' structures that do not lift to arbitrary slices.
  In the language of Lawvere~\cite{lawvere:var-qty}, terms at mode $q$ would be called ``variable quantities'', while those at at mode $p$ would be ``constant quantities''.
  Since the functor $\Set \to \E$ does not in general have a left adjoint, such an interpretation for a general topos is impossible without co-dextrification.
  (When this functor does have a left adjoint, one says that $\E$ is \emph{locally connected}.)
  % ; this generalizes local connectedness for topological spaces.)

  The composite sending $A$ to $\F\mu{\U\mu{A}}$ is then a comonad on one mode, while the one sending $B$ to $\U\mu{\F\mu A}$ is a monad on the other.
  The 2-categories freely generated by a monad or a comonad are infinite, but if a monad or comonad decomposes through a geometric morphism in this way we can internalize it in MATT without needing infinite limits.
  Such a comonad decomposes if and only if it preserves finite limits; indeed the category of coalgebras for a finitely continuous comonad on a topos is again a topos.
  (If we also identify an object with its cofree coalgebra, so we only need one mode of syntactic types, we obtain something like the CoTT of~\cite{zwanziger:natmod-comnd}.)
  The category of algebras for a finitely continuous monad on a topos need not be a topos; but if the topos is Boolean, then it can be induced by \emph{some} geometric morphism~\cite{johnstone:cart-mnd}.
  And for a general finitely continuous monad on a topos, the category of algebras is at least locally cartesian closed by~\cite{kock:bilin-ccmnd} on slice categories, so \cref{thm:set-model} can still be applied.
\end{example}

\begin{example}
  Let $\L$ be the 2-category freely generated by an adjunction $\mu\dashv \nu$.
  Then in $\ladj{\L}$ we have $\mu^\dagger \cong \nu$ by uniqueness of adjoints, so $\ladj{\L}$ is generated (up to equivalence) by an adjoint triple $\mu \dashv \nu \dashv \nu^\dagger$.
  Since $\L$ is countably infinite, we can interpret MATT over this $\ladj{\L}$ in any adjoint triple of functors between toposes (or more general categories) whose left adjoint preserves countable limits (the right adjoints do automatically, of course).
  We would generally prefer to represent the adjoint triple with the modalities $\F\mu\blank$, $\U\mu\blank$, and $\U\nu\blank$, since $\U\mu\blank$ has stronger rules than the equivalent $\F\nu\blank$.
\end{example}

\begin{example}\label{eg:subtop-clc}
  By contrast, the 2-category $\L$ freely generated by a \emph{strictly reflective} adjunction (i.e.\ whose counit is an identity) is finite.
  It has two modes $p$ and $q$, morphisms $\mu:p\to q$ and $\nu:q\to p$ such that $\mu\circ\nu = 1_q$,
% (which is thus the only endomorphism of $q$)
  and a 2-cell $\eta : 1_p \To \nu\circ\mu$
  % (which are the only two endomorphisms of $p$)
  such that $\eta\triangleright \nu = 1_\nu$ and $\mu\triangleleft\eta = 1_\mu$.
  This determines all the composites, so no additional data are needed.
  A pseudofunctor $\L\to\cat$ is a non-strictly reflective adjunction, whose counit is an isomorphism.
  Thus we can interpret MATT over this $\L$ in an arbitrary reflective adjunction between toposes, giving a modal type theory for a topos equipped with a subtopos.
  % (We can also interpret \cref{eg:adj} in such an adjunction, but its reflectivity would not be enforced judgmentally.)

  If the inclusion functor has a further right adjoint, we have a coreflective adjunction in $\topos$, and we can interpret MATT over $\ladj{\L}$ with $\nu$ sinister.
  The induced geometric morphism from the larger topos to the smaller one is then called \emph{totally connected}.
  For instance, the ``topos of trees'' used in guarded recursion theory (presheaves on $(\mathbb{N},\le)$) is totally connected over $\Set$; the modal type theories that it models are discussed in~\cite[\S9]{gknb:mtt} and~\cite[\S VI]{gckgb:fitchtt}.
  (In this topos, the left adjoint happens to already have a further left adjoint, so co-dextrification is not required to interpret modal type theory.)
\end{example}

\begin{example}\label{eg:conn-local}\label{eg:local}
  Taking $\L$ to be the opposite of the one from \cref{eg:subtop-clc}, we can interpret MATT in an arbitrary \emph{coreflective} adjunction between toposes.
  This is the same as a \emph{connected} geometric morphism, such as that from sheaves on some connected space to $\Set$.

  If the right adjoint has a further right adjoint, so that we can interpret MATT over $\ladj{\L}$, the geometric morphism is called \emph{local}.
  This property rarely holds for the topos of sheaves on a space (a ``little topos''), but it often does for toposes whose \emph{objects} can be interpreted as some kind of space (``big toposes'').
  Big toposes are a natural home for \emph{synthetic topology}.
  One is Johnstone's topological topos~\cite{ptj:topological-topos}, whose objects are a sort of sequential convergence space; the internal language of this topos is used for instance in~\cite{escardo:top-hoil}.
  A related topos is $\kappa$-condensed sets~\cite{scholze:lec-condensed}, which has been advocated for the study of algebraic objects equipped with topology.
  (When $\kappa$ is inaccessible, these are also called pyknotic sets~\cite{bh:pyknotic-i}.
  The category of all ``condensed sets'' is locally cartesian closed but not a topos.)

  Many local toposes are also \emph{cohesive}, meaning that the leftmost adjoint of their adjoint triple has a further left adjoint.
  This left adjoint is not usually finitely continuous, so it cannot be represented internally as a judgmental modality with co-dextrification, although it can be introduced axiomatically as in~\cite{shulman:bfp-realcohesion,mr:commuting-cohesions}.
  When it exists, co-dextrification is not needed to interpret the other modalities.
  But Johnstone's topological topos and $\kappa$-condensed sets are not cohesive, so co-dextrification is necessary in those cases.

  Non-cohesive local toposes turn out to have many advantages.
  One clear advantage is that they can include non-locally-connected spaces, which arise naturally in many parts of mathematics.
  In addition, they often do a better job of faithfully encoding topological notions such as unions of closed sets, constructions of cell complexes (including geometric realization of simplicial sets), and cohomology.
  In~\cite{ex:kleene-kreisel} such a topos was used to represent the Kleene--Kreisel functionals and model principles of intutionism.
\end{example}

\begin{example}
  We can simplify the mode theory of \cref{eg:conn-local} by removing the mode corresponding to the base topos.
  Then $\L$ has one mode $p$ and a single idempotent comonad $\mu:p\to p$, and is again finite.
  Thus, MATT over this $\L$ can be interpreted in any topos that is connected over some base, with the base topos visible as the modal types for the comonad $\F\mu\blank$.
  This instantiation of MATT is similar to the split-context ``crisp type theory'' used in~\cite{lops:internal-universes} to construct universes in cubical sets.
  % (TODO: Describe the co-dextrification: it's essentially a comma category over the inclusion of coalgebras.)

  If the topos is additionally local, the comonad has a right adjoint monad, and we can interpret MATT over $\ladj{\L}$.
  This is similar to the split-context ``spatial type theory'' of~\cite{shulman:bfp-realcohesion}, which was conjectured to be interpretable in any local topos; we have thus established this for a related lock-based theory.
  Note that although examples like Johnstone's topological topos and pyknotic sets require co-dextrification, the intended model of~\cite{shulman:bfp-realcohesion} is cohesive and hence does not.
\end{example}

\begin{example}
  Applying the same simplification to \cref{eg:subtop-clc}, we obtain a 2-category $\L$ with one mode and an idempotent \emph{monad}, for which $\ladj{\L}$ can be interpreted in any topos that is totally connected over some base.
  This is related to the left-lifting theory of~\cite{nvd:parametric-quant}, which is interpreted in a topos of ``bridge/path cubical sets'' that is totally connected over ordinary cubical sets.
  (The left adjoint in this case also has a further pair of left adjoints.)
\end{example}

\begin{example}
  If $\L$ is a meet-semilattice, regarded as a mon\-oid\-al poset and thereby a one-object 2-category, we obtain an instance of MATT that is similar to the left-lifting theory of~\cite{mr:commuting-cohesions}.
  In many of their examples each ``focus'' is cohesive, hence the further left adjoints needed for locks already exist.
  This includes the situation of ``differential cohesion'' studied in~\cite{glnprsw:dctt}, as well as other related situations.
  But there are related examples in which not all foci are cohesive, such as simplicial objects in the topological topos, or simplicial pyknotic sets, and for these we require co-dextrification.
\end{example}

\begin{example}
  The left-lifting theory of~\cite{rfl:synthetic-spectra} is similar to MATT over the 2-category generated by an idempotent endomorphism that is both a monad and a comonad and adjoint to itself.
  This means we can represent its modality negatively, and use it as its own lock functor in semantics, thereby interpreting this instance of MATT in any topos equipped with such an endofunctor.
  Unfortunately, the intended model of~\cite{rfl:synthetic-spectra} is an $(\infty,1)$-topos without an evident 1-categorical analogue, so it is not covered by this paper.
\end{example}

\begin{example}\label{eg:gr}
  By~\cite[Theorem 8.1]{ptj:topological-topos}, there is a geometric morphism $S : \E \to \sSet$ from Johnstone's topological topos $\E$ to the topos $\sSet$ of simplicial sets, whose direct image $S_*$ is the total singular complex (suitably generalized) and whose inverse image $S^*$ is geometric realization.
  Since both $\E$ and $\sSet$ are local over $\Set$, this allows us to reason formally about geometric realization using an instance of MATT with three modes --- say $t$ for the topological topos, $s$ for simplicial sets, and $d$ for discrete sets --- with sinister coreflective adjunctions relating $d$ to both $t$ and $s$, and a sinister morphism $\sigma : s\to t$ for the geometric realization adjunction.
  As $\E$ is not cohesive (though $\sSet$ is), and geometric realization is not a right adjoint, this would be impossible without co-dextrification.
  Using~\cite[Theorem 8.2]{ptj:topological-topos}, we can do something similar for geometric realization of ``simplicial spaces'', i.e.\ simplicial objects of $\E$.
\end{example}

\section{Conclusion and future work}
\label{sec:conclusion}

We have shown that, contrary to appearances, general modal type theories formulated with ``context locks'' following~\cite{gknb:mtt,gckgb:fitchtt} can be interpreted in diagrams of categories without requiring additional left adjoints to interpret the locks.
This significantly expands the potential semantics of such theories, strengthening the argument that they are a good general approach to modal dependent type theories.
In addition, we have formulated MATT, a general context-lock modal type theory that unifies the positive modalities of~\cite{gknb:mtt} with the negative ones of~\cite{gckgb:fitchtt}, and shown that it is the natural type theory to interpret in our semantics.

We have, however, left many open questions for future research, such as the following.
\begin{enumerate}
\item Can the assumption of $\kappa$-small limits be weakened, specifically when $\kappa>\omega$?
\item It is known~\cite{shulman:univinj} that intensional dependent type theory can be interpreted in any $(\infty,1)$-topos.
  Can intensional MATT be interpreted in any \emph{diagram} of $(\infty,1)$-topoi?
\item Is there a full ``internal language correspondence'' relating MATT to suitable diagrams of categories?
  E.g.\ do adjoint modal natural models have a homotopy theory that presents diagrams of categories?
\item Does MATT satisfy normalization, and which $(\L,\cS)$ are decidable?  (See \cref{rmk:norm}.)
\item Is there a general modal dependent type theory using left multi-liftings, and can it be interpreted in the co-dextrification?
  Can it be generalized to cases where left multi-liftings do not exist?
\item In~\cite{lsr:multi}, \emph{simple} modal type theories were unified with substructural ones.
  Is there a context-lock approach to substructurality?
  Can it be unified with modal dependent type theory?
\end{enumerate}

\begin{arxiv}
\appendix

\section{The universal property of co-dextrification}
\label{univ-prop}

Here we sketch a proof that the co-dextrification is a right adjoint.
We continue the notation of \cref{sec:cofree}.

Let $\lexk$ denote the 2-category of categories with $\kappa$-small limits, functors that preserve such limits, and natural transformations.
Let $\Ps(\M,\lexk)$ denote the 2-category of pseudofunctors, colax natural transformations, and modifications from $\M$ to $\lexk$.
(A colax natural transformation $F : \C \to \D$ consists of components $F_p : \C_p \to \D_p$ and 2-cells $F_\mu : F_q \circ \C_\mu \To \D_\mu \circ F_p$, satisfying functoriality and naturality.)

Let $\Psadj(\M\coop,\lexk)$ denote the 2-category of:
\begin{itemize}
\item Pseudofunctors $\D:\M\coop\to\lexk$, with action on morphisms and 2-cells written as $\D^\mu$ and $\D^\alpha$ respectively, and such that each $\D^\mu$ has a right adjoint $\D_{\mu}$ in $\lexk$.
\item Pseudonatural transformations between pseudofunctors $\M\coop\to\lexk$.
\item All modifications between these.
\end{itemize}

\begin{lemma}
  There is a 2-functor $U : \Psadj(\M\coop,\lexk) \to \Ps(\M,\lexk)$.
\end{lemma}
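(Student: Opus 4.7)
The plan is to use the mates correspondence to flip the variance on 1-cells while preserving the variance on 2-cells. On objects, given $\D : \M\coop \to \lexk$ with a chosen right adjoint $\D_\mu$ for every $\D^\mu$, define $U\D : \M \to \lexk$ by $p \mapsto \D_p$, $\mu \mapsto \D_\mu$, and for $\alpha : \mu \To \nu$ take the image to be the mate $\D_\alpha : \D_\mu \To \D_\nu$ of $\D^\alpha : \D^\nu \To \D^\mu$ with respect to $\D^\mu \dashv \D_\mu$ and $\D^\nu \dashv \D_\nu$. Each $\D_\mu$ lies in $\lexk$ because right adjoints preserve all existing limits. Pseudofunctoriality of $U\D$ follows from pseudofunctoriality of $\D$ by uniqueness of adjoints: the compositor $\D_\mu \circ \D_\nu \cong \D_{\mu\circ\nu}$ is forced as right adjoint of $\D^\nu \circ \D^\mu \cong \D^{\mu\circ\nu}$, and the associator and unitor coherence pentagons transfer via the mate calculus.

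For 1-cells, given a pseudonatural $F : \D \to \E$ with structure isomorphisms $F_\mu : F_p \circ \D^\mu \cong \E^\mu \circ F_q$, define $UF : U\D \to U\E$ with the same components $F_p$ and colax constraints $(UF)_\mu : F_q \circ \D_\mu \To \E_\mu \circ F_p$ given by the mate of $F_\mu^{-1}$ under $\D^\mu \dashv \D_\mu$ and $\E^\mu \dashv \E_\mu$. The colax-naturality equations for $UF$ (unit, composition, and naturality in 2-cells of $\M$) translate under mating into the pseudonaturality equations for $F$, and vice versa. Note that although $F_\mu^{-1}$ is invertible, its mate generally is not, which is precisely why $U$ lands in the colax $\Ps$ rather than a pseudo variant.

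For 2-cells, a modification $\Xi : F \to G$ is a family $\Xi_p : F_p \To G_p$ satisfying a pasting equation against $F_\mu$ and $G_\mu$ for each $\mu$. Send $\Xi$ to the modification $U\Xi$ with the same components; the required pasting equation against $(UF)_\mu$ and $(UG)_\mu$ is exactly the mate of the one for $F_\mu$ and $G_\mu$, so it holds automatically. Strict respect for vertical composition and identities of modifications is then immediate because $U$ acts identically on components.

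Finally, one checks that $U$ respects identity 1-cells, horizontal and vertical composition of 1-cells, and whiskering. Each of these reduces by mate calculus to standard facts: mating commutes with pasting, with vertical composition, and with whiskering by arbitrary 1-cells. The main obstacle is purely bookkeeping, not conceptual; no additional hypotheses on $\M$ or $\lexk$ are needed, since every construction happens pointwise in a single hom-category, and a cleaner packaging would observe that $U$ is the restriction of the classical 2-isomorphism between left-adjoint-preserving and right-adjoint-preserving morphisms of adjunctions to pseudofunctor-valued diagrams.
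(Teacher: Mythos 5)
Your proof is correct and follows essentially the same route as the paper's: the paper likewise obtains $U$ on objects from the pseudofunctoriality of passage to (right) adjoints and on 1-cells by taking the mate of the pseudonaturality isomorphism to get the colax constraint, with the non-invertibility of mates explaining why the target is the colax 2-category $\Ps(\M,\lexk)$. Your version simply spells out more of the mate-calculus bookkeeping (including the action on modifications) that the paper leaves implicit.
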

\begin{proof}
  Since passage to adjoints is pseudofunctorial, for $\D\in\Psadj(\M\coop,\lexk)$ the right adjoints $\D_{\mu}$ form a pseudofunctor $\M\to\lexk$.
  If $F:\C\to\D$ is a pseudonatural transformation between $\C,\D\in\Psadj(\M\coop,\lexk)$, then the pseudonaturality isomorphism $\D^\mu \circ F_q \cong F_p \circ \C^\mu$ has a mate $F_q \circ \C_{\mu} \To \D_{\mu} \circ F_p$, providing the 2-cells to make $F$ a colax natural transformation between pseudofunctors $\M\to\lexk$.
  % We leave the rest to the reader.
\end{proof}

Thus, \cref{sec:cofree} shows that from $\C\in\Ps(\M,\lexk)$ we have constructed $\Chat \in \Psadj(\M\coop,\lexk)$, along with a map $\reflect{} : U\Chat \to \C$ in $\Ps(\M,\lexk)$.
In fact, $\Chat$ is even a strict 2-functor $\M\coop\to\lexk$, and the map $\reflect{}$ is even \emph{pseudo} natural.
The former distinction is immaterial (any pseudofunctor into $\lexk$ is equivalent to a strict one); but regarding the latter, it appears to be important for the universal property that we use colax transformations in general even though $\reflect{}$ is pseudo.

\begin{theorem}
  Let $\C\in \Ps(\M,\lexk)$, with $\Chat$ constructed as in \cref{sec:cofree}.
  Then for any $\D\in \Psadj(\M\coop,\lexk)$, the functor
  \[ \Psadj(\M\coop,\lexk)(\D,\Chat)
    % \xto{U} \Ps(\M,\lexk)(U\D,U\Chat) \xto{(\smlock_1 \circ -)}
    \xto{(\reflect{} \circ U-)}
    \Ps(\M,\lexk)(U\D,\C) \]
  is an equivalence of categories.
  Thus, $\C \mapsto \Chat$ is bicategorically right adjoint to the forgetful functor $U:\Psadj\to\Ps$.
\end{theorem}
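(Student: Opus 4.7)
The plan is to exploit the oplax limit universal property of $\Chat_r$ together with the mate correspondence for the adjunctions $\D^\mu \dashv \D_\mu$ in $\D$.

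Given a colax transformation $F: U\D \to \C$, I would construct a pseudonatural $\tilde F: \D \to \Chat$ as follows. Each $\tilde F_r : \D_r \to \Chat_r$ is determined, by the universal property of $\Chat_r$ as an oplax limit, by an oplax cone of shape $\slice\M//r$ over $\C \circ \pi_r$ with apex $\D_r$. Its $\mu$-component ($\mu:p\to r$) I take to be $F_p \circ \D^\mu$, and for each morphism $(\varrho,\alpha) : \mu \to \nu$ in $\slice\M//r$ the 2-cell of the cone, $F_q \D^\nu \To \C_\varrho F_p \D^\mu$, I define by taking the mate of $\D^\alpha : \D^\varrho \D^\nu \To \D^\mu$ under $\D^\varrho \dashv \D_\varrho$ to obtain $\D^\nu \To \D_\varrho \D^\mu$, applying $F_q$, and then pasting with the colax naturality cell $F_\varrho : F_q \D_\varrho \To \C_\varrho F_p$ evaluated at $\D^\mu$.

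Next I would verify the oplax cone axioms (corresponding to the clauses of \cref{defn:chat}), functoriality of $\tilde F_r$ in $\Delta$, and pseudonaturality of $\{\tilde F_r\}_r$ in $r$. The latter is essentially automatic because $\cmpnt{(\Chat^\varpi \tilde F_s \Delta)}\nu = F_p(\D^{\varpi\circ\nu}\Delta)$ agrees, up to pseudofunctoriality of $\D$, with $\cmpnt{\tilde F_r(\D^\varpi \Delta)}\nu$. To see that $F \mapsto \tilde F$ is inverse to $(\reflect{} \circ U-)$, I would observe that $\reflect{p}\tilde F_p(\Delta) = \cmpnt{\tilde F_p \Delta}{1_p} \cong F_p(\Delta)$ on one side, while on the other the pseudonaturality constraints of $\tilde F$ recover each $\cmpnt{\tilde F_r(\Delta)}\mu$ from its $1_p$-component $\cmpnt{\tilde F_p(\D^\mu\Delta)}{1_p}$. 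Finally, one extends the equivalence to modifications in the obvious way and checks naturality in $\D$ and $\C$ for the biadjunction.

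The main obstacle is the coherence verification at every level: the axioms of \cref{defn:chat} for $\tilde F_r(\Delta)$, and the compatibility of the mate-plus-$F_\varrho$ construction with pseudofunctoriality of $\D$ and $\C$, the colax triangle and square axioms of $F$, and composition of 2-cells in $\slice\M//r$. Each of these unwinds to a routine but intricate mate-calculus diagram chase, relying on the standard facts that mate pairs are compatible with composition of adjunctions and that colax transformations of pseudofunctors obey the expected coherences. This is also where the asymmetry between colax on the $\Ps$ side and pseudo on the $\Psadj$ side gets used: the mate construction converts colax data about the right adjoints $\D_\mu$ into the pseudo data required for a pseudonatural transformation into $\Chat$.
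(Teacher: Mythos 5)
Your construction is the same as the paper's: you build the pseudo-inverse $F \mapsto \tilde F$ with components $\cmpnt{(\tilde F_r\Gamma)}\mu = F_p(\D^\mu\Gamma)$ and define the structural morphisms by taking the mate of $\D^\alpha$ under $\D^\varrho \dashv \D_\varrho$, applying $F_q$, and pasting with the colax constraint $F_\varrho$, then verify the two triangle isomorphisms exactly as the paper does (via $\D^{1_p}\cong\mathrm{id}$ on one side and $\reflect{p}\circ\Chat^\mu \cong$ evaluation at $\mu$ on the other). The phrasing via the oplax-limit universal property of $\Chat_r$ is just a repackaging of the componentwise definition in \cref{defn:chat}, so the proposal is correct and essentially identical in approach.
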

\begin{proof*}{Sketch of proof.}
  We construct a pseudo-inverse.
  Let $G : \D\to\C$ be colax natural; we define $\Ghat : \D\to\Chat$ as follows.
  For $\Gamma\in \D_r$ and $\mu:p\to r$, we set
  \[(\Ghat_r\Gamma)^\mu = G_p(\D^\mu(\Gamma)) \quad \in \C_p. \]
  And for $\alpha : \mu \To \nu\circ\varrho$, we let $(\Ghat_r\Gamma)^\alpha$ be the composite
  \begin{equation*}
    (\Ghat_r\Gamma)^\nu
    = G_q(\D^\nu(\Gamma))
    \xto{G_q(\overline{\D^\alpha(\Gamma)})} G_q(\D_\varrho \D^\mu(\Gamma))
    \xto{G_\varrho} \C_\varrho(G_p(\D^\mu(\Gamma)))
    = \C_{\varrho}((\Ghat_r\Gamma)^\mu)
  \end{equation*}
  where $\overline{\D^\alpha(\Gamma)}: \D^\nu(\Gamma)\to \D_\varrho \D^\mu(\Gamma)$ is the mate of the map
  \[ \D^\alpha(\Gamma) : \D^\varrho \D^\nu(\Gamma) \to \D^\mu(\Gamma) \]
  arising from pseudofunctoriality of the left adjoints for $\D$.
  All the axioms follow from the functoriality and naturality of mates, as does functoriality; thus we have a functor $\Ghat_r : \D_r \to \Chat_r$.

  Now let $\omega : r \to s$; we show $\Ghat$ commutes with $\D^\omega$ and $\Chat^\omega$, componentwise.
  We have
  \[ (\Ghat_r(\D^\omega(\Gamma)))^\mu
    = G_p(\D^\mu\D^\omega(\Gamma))
    \cong G_p(\D^{\omega\circ\mu}(\Gamma))
    = (\Ghat_s\Gamma)^{\omega\circ\mu}
    = (\Chat^\omega(\Ghat_s\Gamma))^\mu.
  \]
  If $\D$ is a strict 2-functor, this is a strict equality.
  Otherwise, the pseudonaturality axioms follow.

  On one side, if we map $\Ghat$ back down into $\Ps(\M,\lexk)(U\D,\C)$, its components are $(\Ghat_p \Gamma)^{1_p} = G_p(\D^{1_p}(\Gamma)) \cong G_p \Gamma$.
  This defines an isomorphism $G \cong \reflect{} \circ U\Ghat$ in $\Ps(\M,\lexk)(U\D,\C)$.

  On the other side, suppose $F:\D \to \Chat$ is a morphism in $\Psadj(\M\coop,\lexk)$.
  We want to compare it to $\widehat{\reflect{}\circ UF}$, so we compute using the definition of the latter:
  \[
    ((\widehat{\reflect{}\circ UF})_r (\Gamma))^\mu = \reflect{p}F_p(\D^\mu(\Gamma))
    \cong \reflect{p}({\Chat}^\mu(F_r\Gamma))
    = (F_r\Gamma)^\mu
  \]
  These define the components of an isomorphism $\widehat{\reflect{}\circ UF}\cong F$ in $\Psadj(\M\coop\lexk)$.
\end{proof*}
\end{arxiv}

\bibliographystyle{entics}
\bibliography{all}

\end{document}